\documentclass[12pt]{article}

\usepackage{amsmath}
\usepackage{amssymb}
\usepackage{amsthm}
\usepackage{amsfonts}
\usepackage{latexsym}
\usepackage{cite}
\usepackage{psfrag}
\usepackage{epsfig}
\usepackage{graphicx}
\usepackage{mathrsfs}
\usepackage{url}
\usepackage{color}
\usepackage{cancel}
 \usepackage{eufrak}
 \usepackage[pdftex]{lscape}
 \usepackage{lscape}
 \usepackage{mathabx}

\usepackage{multirow}
\usepackage{multicol}
\usepackage{hyperref}

\usepackage{scalerel}
\usepackage{tikz}
\usetikzlibrary{svg.path}

\definecolor{orcidlogocol}{HTML}{A6CE39}
\tikzset{
  orcidlogo/.pic={
    \fill[orcidlogocol] svg{M256,128c0,70.7-57.3,128-128,128C57.3,256,0,198.7,0,128C0,57.3,57.3,0,128,0C198.7,0,256,57.3,256,128z};
    \fill[white] svg{M86.3,186.2H70.9V79.1h15.4v48.4V186.2z}
                 svg{M108.9,79.1h41.6c39.6,0,57,28.3,57,53.6c0,27.5-21.5,53.6-56.8,53.6h-41.8V79.1z M124.3,172.4h24.5c34.9,0,42.9-26.5,42.9-39.7c0-21.5-13.7-39.7-43.7-39.7h-23.7V172.4z}
                 svg{M88.7,56.8c0,5.5-4.5,10.1-10.1,10.1c-5.6,0-10.1-4.6-10.1-10.1c0-5.6,4.5-10.1,10.1-10.1C84.2,46.7,88.7,51.3,88.7,56.8z};
  }
}

\newcommand\orcidicon[1]{\href{https://orcid.org/#1}{\mbox{\scalerel*{
\begin{tikzpicture}[yscale=-1,transform shape]
\pic{orcidlogo};
\end{tikzpicture}
}{|}}}}

\usepackage{hyperref} 

\usepackage{longtable}

\makeatletter
\@addtoreset{equation}{section}
\makeatother

\makeatletter
\@addtoreset{table}{section}
\makeatother

\newtheorem{theorem}{Theorem}[section]
\newtheorem{lemma}[theorem]{Lemma}
\newtheorem{corollary}[theorem]{Corollary}
\theoremstyle{definition}
\newtheorem{definition}[theorem]{Definition}
\newtheorem{notation}[theorem]{Notation}
\newtheorem{remark}[theorem]{Remark}

\newcommand\Ic{\mathcal{I}}
\newcommand\Lc{\mathcal{L}}
\newcommand\Oc{\mathcal{O}}

\newcommand\Cs{\mathscr{C}}
\newcommand\Ls{\mathscr{L}}
\newcommand\Ms{\mathscr{M}}
\newcommand\Ns{\mathscr{N}}
\newcommand\Os{\mathscr{O}}

\newcommand\PG{\mathrm{PG}}

\newcommand\RC{\mathrm{RC}}
\newcommand\RA{\mathrm{RA}}
\newcommand\Tr{\mathrm{T}}
\newcommand\IC{\mathrm{IC}}
\newcommand\IA{\mathrm{IA}}
\newcommand\UG{\mathrm{U\Gamma}}
\newcommand\UnG{\mathrm{Un\Gamma}}
\newcommand\EG{\mathrm{E\Gamma}}
\newcommand\EnG{\mathrm{En\Gamma}}
\newcommand\Ar{\mathrm{A}}
\newcommand\EA{\mathrm{EA}}
\newcommand\TO{\mathrm{TO}}

\newcommand\Mb{\mathbf{M}}
\newcommand\Pb{\mathbf{P}}

\newcommand\F{\mathbb{F}}

\newcommand\Ak{\mathfrak{A}}
\newcommand\Lk{\mathfrak{L}}
\newcommand\Mk{\mathfrak{M}}
\newcommand\Pk{\mathfrak{P}}
\newcommand\pk{\mathfrak{p}}

\newcommand\T{\text}
\newcommand\db{\displaybreak[3]}

\newcommand\bez{$\widebar{0}$}
\newcommand\beo{$\widebar{1}$}
\newcommand\bet{$\widebar{2}$}
\newcommand\bethr{$\widebar{3}$}
\newcommand\bef{$\widebar{4}$}
\newcommand\befv{$\widebar{5}$}

\newcommand\wz{$\widehat{0}$}
\newcommand\wo{$\widehat{1}$}
\newcommand\wt{$\widehat{2}$}
\newcommand\wthr{$\widehat{3}$}
\newcommand\wf{$\widehat{4}$}

\newcommand\ws{$\widehat{6}$}
\newcommand\wsv{$\widehat{7}$}

\newcommand\tz{$\widetilde{0}$}
\newcommand\tone{$\widetilde{1}$}
\newcommand\ttwo{$\widetilde{2}$}
\newcommand\tthr{$\widetilde{3}$}

\newcommand\pl{\,+\,}
\newcommand\el{\,=\,}

\begin{document}
\title{Orbits and incidence matrices for points, planes and lines regarding the twisted cubic in $\PG(3,q)$, $q=2,3,4$
\date{}
}
\maketitle
\begin{center}
{\sc Alexander A. Davydov \orcidicon{0000-0002-5827-4560}}\\
 {\sc\small Kharkevich Institute for Information Transmission Problems}\\
 {\sc\small Russian Academy of Sciences,
Moscow, 127051, Russian Federation}\\
 \emph{E-mail address:} alexander.davydov121@gmail.com\medskip\\
 {\sc Stefano Marcugini \orcidicon{0000-0002-7961-0260} and
 Fernanda Pambianco \orcidicon{0000-0001-5476-5365}}\\
 {\sc\small Department of  Mathematics  and Computer Science,   University of Perugia,}\\
 {\sc\small Perugia, 06123, Italy}\\
 \emph{E-mail address:} \{stefano.marcugini, fernanda.pambianco\}@unipg.it
\end{center}

\textbf{Abstract.}
In the three-dimensional projective space $\mathrm{PG}(3,q)$
over the finite field $\mathbb{F}_q$ with $q$ elements, we consider the normal rational curve
known as a twisted cubic and the projectivity group $G_q$ that fixes it.
For $q=2,3,4$, we solve the open problems of classifying the orbits
of points, planes, and lines under $G_q$ and of determining
the corresponding incidence matrices between points, planes, and lines
partitioned into these orbits.

\textbf{Keywords:} Twisted cubic, projective space, orbit, incidence matrix

\textbf{Mathematics Subject Classification (2010).} 51E21, 51E22

\section{Introduction}
Let $\F_q$ be the Galois field with $q$ elements, $\F_q^*=\F_q\setminus\{0\}$, and $\F_q^+=\F_q\cup\{\infty\}$. Let $\PG(N,q)$ be the $N$-dimensional projective space over $\F_q$. An $n$-arc in  $\PG(N,q)$ is a set of $n$ points such that no $N +1$ points belong to the same hyperplane of $\PG(N,q)$. In $\PG(N,q)$, a normal rational curve is a $(q+1)$-arc projectively equivalent to the arc $\{(t^N,t^{N-1},\ldots,t^2,t,1):t\in \F_q^+\}$. In $\PG(3,q)$, the normal rational curve is called a \emph{twisted cubic} \cite{Hirs_PG3q,HirsThas-2015}. For an introduction to projective geometry over finite fields, see \cite{Hirs_PG3q,Hirs_PGFF,HirsStor-2001,HirsThas-2015}.

The twisted cubic has many interesting properties and is connected with various combinatorial and applied problems; see, e.g., \cite{BDMP-TwCub,BallicoCos,BlokPelSzo,BonPolvTwCub,BrHirsTwCub,CLPolvT_Spr,CerPavDM,CosHirsStTwCub,%
DMP_RSCoset,DMP_PlLineIncJG,DMP_PointLineInc,DMP_OrbLineO6MJOM,DMP_IncO6JG,DMP_MJOM2025,%
DMP_OrbLineMJOM,GiulVincTwCub,GulLav,Hirs_PG3q,HirsStor-2001,HirsThas-2015,KorchLanzSon,LunarPolv,ZanZuan2010,%
KaPatPradOrbits,KaPrad-q3orbits,KaPradInciden} and the references therein. In particular, properties of the twisted cubic have been used to study spreads and partial spreads in $\PG(3,q)$ \cite{BallicoCos,BrHirsTwCub,BonPolvTwCub,CLPolvT_Spr,LunarPolv}, to construct optimal multiple covering codes \cite{BDMP-TwCub}, to obtain the weight distributions of cosets and their leaders for Reed--Solomon codes \cite{BlokPelSzo,DMP_RSCoset}, and to consider 2-level and 3-level secret sharing schemes \cite{GiulVincTwCub,KorchLanzSon}.

In investigations of the twisted cubic, the following problems are important:

\textbf{Problem A.} Describe exactly the orbits of points, planes, and lines under the action of the projectivity group $G_q$ fixing the twisted cubic.

\textbf{Problem B.} Obtain the matrices of the incidences between points, planes, and lines partitioned into the orbits under $G_q$.
In other words, Problem B consists of obtaining the point-plane, point-line, and plane-line incidence matrices, whose submatrices are associated with the corresponding orbits under $G_q$.

Problems A and B have been considered in many works; see, e.g., \cite{BDMP-TwCub,BlokPelSzo,BrHirsTwCub,CerPavDM,%
DMP_PlLineIncJG,DMP_PointLineInc,DMP_OrbLineO6MJOM,DMP_IncO6JG,DMP_MJOM2025,DMP_OrbLineMJOM,GulLav,Hirs_PG3q,%
KaPatPradOrbits,KaPrad-q3orbits,KaPradInciden} and the references therein. The results are different for the fields $\F_q$ with $q\ge5$
and for $q=2,3,4$. In part, this can be explained by the fact that for all $q\ge5$, we have $G_q\cong PGL(2,q)$, where $PGL(2,q)$ is the group of projectivities of $\PG(1,q)$, whereas for $q=2,3,4$  the structures of $G_q$ are different.

For the fields $\F_q$, for $q\ge5$, the following results have been obtained in the literature.

In \cite{Hirs_PG3q}, the orbits of planes and points under $G_q$ are
described in detail; see Section~\ref{sec_prelimin}, Theorem \ref{th2:Hirs}(ii)--(iv). For all $q\ge5$, the point-plane incidence matrices of $\PG(3,q)$, based on these results, are given in \cite{BDMP-TwCub}, where the numbers of distinct planes through distinct
points and, conversely, the numbers of distinct points lying in distinct planes are obtained. (By ``distinct planes'' we mean ``planes from distinct orbits'', and similarly for points and lines.)

In \cite{BrHirsTwCub,CerPavDM,Hirs_PG3q}, the lines in $\PG(3,q)$ are partitioned into classes, denoted by $\Oc_i$ and $\Oc'_i$, each of which is a union of line orbits under $G_q$; see also Section \ref{sec_prelimin}, Theorem \ref{th2:LineClasses}. Apart from one class (denoted by $\Oc_6$), the number, sizes, and structure of the orbits forming these unions are
independently considered by different methods in \cite[Sections 3, 8]{DMP_OrbLineMJOM} (for all $q\ge5$), \cite[Section 7]{BlokPelSzo}
(for all $q\ge23$), and \cite{GulLav} (for finite fields of characteristic $>3$); see also the references therein. The results on line orbits from \cite{BlokPelSzo,DMP_OrbLineMJOM,GulLav} are in accordance with each other, but the representations of the orbits in these papers are different. Also, in \cite{DMP_OrbLineMJOM}, the stabilizer groups of the considered orbits are obtained and described in detail, whereas in \cite{BlokPelSzo,GulLav} the stabilizer groups are not considered.

Using the representations of the line orbits in \cite{DMP_OrbLineMJOM} and the plane and point orbits in \cite{Hirs_PG3q},  the point-line and plane-line incidence matrices of $\PG(3,q)$ are given in \cite{DMP_PointLineInc} and \cite{DMP_PlLineIncJG}, respectively.  In \cite{DMP_PointLineInc}, apart from the lines in the class $\Oc_6$, for all $q\ge5$, the numbers of distinct points lying on distinct lines and, conversely, the numbers of distinct lines through distinct points are obtained. Similarly, in \cite{DMP_PlLineIncJG}, apart from $\Oc_6$, for all $q\ge5$, the numbers of distinct planes through distinct lines and, conversely,
the numbers of distinct lines lying in distinct planes are given.

In \cite{GulLav}, apart from the lines in the class $\Oc_6$, for odd $q\not\equiv0\pmod 3$, the numbers of distinct planes through distinct lines (called ``the plane orbit distribution of a line'') and the numbers of distinct points lying on distinct
lines (called ``the point orbit distribution of a line'') in $\PG(3,q)$ are obtained. For finite fields of characteristic $>3$, the results of \cite{GulLav} on ``the point orbit distribution of a line'' and ``the plane orbit distribution of a line'' are in accordance with those of \cite{DMP_PointLineInc,DMP_PlLineIncJG} on the point-line and plane-line incidence matrices.

It should be noted that in \cite[Section 8, Theorem 8.1, Conjecture 8.2]{DMP_OrbLineMJOM}, for the all fields $\F_q$, $q\ge5$, a detailed conjecture on the number of line orbits, belonging to the class $\Oc_6$, and on the sizes of these orbits is formulated; for $5\le q\le37$ and $q=64$, the conjecture was proved by an exhaustive
computer search. Furthermore, in the papers \cite{CerPavDM,DMP_OrbLineO6MJOM,DMP_MJOM2025,KaPatPradOrbits,KaPrad-q3orbits}, this conjecture is proved for all $q\ge5$.

In \cite{DMP_OrbLineO6MJOM}, for all even and odd $q\ge5$, a so-called family $\Os_\mu$ of line orbits of the class $\Oc_6$ is obtained using a line family, called \emph{$\ell_\mu$-lines}, where a parameter $\mu$ runs over $\F_q^*\setminus\{1\}$. Also, one more  orbit $\Os_\Lc$, based on a line $\Lc$ with another description, is given. The orbits $\Os_\mu$ and $\Os_\Lc$ are based on the analysis of the stabilizer groups of the corresponding lines. These orbits include an essential part of all $\Oc_6$ orbits; e.g., they include about one-half and  one-third of all lines of $\Oc_6$ for even $q$ and for $q\equiv 0 \pmod3$, respectively.

 In \cite{DMP_IncO6JG}, using the properties of the orbits $\Os_\mu$ and $\Os_\Lc$ from \cite{DMP_OrbLineO6MJOM}, all the plane-line and point-line incidence matrices connected with these orbits are obtained.

In \cite{DMP_MJOM2025}, the approaches of \cite{DMP_OrbLineO6MJOM,DMP_IncO6JG} are continued and developed. A new family of lines $\Ls_\rho$, where $\rho$ is a parameter running over $\F_q^*$, is proposed. This family is a generalization of the line $\Lc$ from \cite{DMP_OrbLineO6MJOM}. For $q\not\equiv 0 \pmod3$ the lines $\Ls_\rho$ belong to the class $\Oc_6$. The investigation of the stabilizer groups of the lines $\Ls_\rho$ for all $q$ and $\rho$ allows to calculate the sizes of the orbits under $G_q$ containing the lines $\Ls_\rho$. The point-line and plane-line incidence submatrices related to these orbits are also obtained in \cite{DMP_MJOM2025}.

In  $\PG(3,q)$, for $q=2^n$, $n\ge3$,
the $(q+1)$-arc $\mathcal{A}_{h,q}=\{(1,t,t^{2^h},t^{2^h+1})\,|\,t\in\F_q^+\}$, with $gcd(n,h)=1$, has been considered in \cite{CerPavDM}. In this paper, we are interested in the case $h=1$, since $\mathcal{A}_{1,q}$, $q=2^n\ge8$, is the twisted cubic with projective stabilizer $G_q\cong PGL(2,q)$. In \cite{CerPavDM}, for even $q\ge8$, all point, plane, and line orbits under $G_q$, including the orbits of  the class $\Oc_6$, are classified and the conjecture of \cite[Theorem 8.1, Conjecture 8.2]{DMP_OrbLineMJOM} is proved. Also, the point-plane and point-line incidence matrices with respect to the $G_q$-orbits are obtained; however, the plane-line incidence matrix is not considered.

In \cite{KaPatPradOrbits}, for a field $\F_q$ of characteristic $>3$, the conjecture of \cite[Theorem 8.1, Conjecture 8.2]{DMP_OrbLineMJOM} on the sizes and the number of orbits in the class  $\Oc_6$ has been proved. To this end the open problem of classifying binary quartic forms over $\F_q$ into $G_q$-orbits is solved and used. Also, the Pl\"{u}cker embedding for the Klein quadric is applied. The point-line incidence matrix, related to the considered orbits, is obtained in \cite{KaPradInciden}, where the following is also noted:  if the characteristic of the field $\F_q$ is not equal to  $3$,  then there is a symplectic polarity associated with the twisted cubic that maps each point of the twisted cubic to its osculating plane. Therefore, obtaining results for the point-line incidence matrices is equivalent to obtaining the corresponding results for the plane-line incidence matrices, since the latter can be derived via the polarity.

In \cite{KaPrad-q3orbits}, the problem of classifying the lines of $\PG(3,q)$ into orbits of the group
$PGL(2,q)$ is solved for a field $\F_q$ of characteristic $3$. Similarly to \cite{KaPatPradOrbits}, the problem is reduced to the classification of binary quartic forms into $G_q$-orbits. Also, the corresponding point-line and line-plane incidence matrices are obtained.

In \cite{KaPatPradOrbits,KaPrad-q3orbits,KaPradInciden}, the methods and approaches are different from those in the papers \cite{CerPavDM,DMP_PointLineInc,DMP_PlLineIncJG,DMP_OrbLineMJOM,DMP_OrbLineO6MJOM,DMP_IncO6JG,DMP_MJOM2025}.

Thus, for all fields $\F_q$ with $q\ge5$, including $q=2^m\ge8$  and $q=3^m\ge9$,
Problem A is solved completely, whereas Problem~B remains partially open. More exactly, for all the fields $\F_q$, $q\ge5$, the point-plane and point-line incidence matrices are completely obtained, the same holds for plane-line incidence matrices when $q\equiv0\pmod3$; however, for $q\not\equiv0\pmod3$, the plane-line incidence matrices, that include lines of the class $\Oc_6$, are not fully obtained, see \cite{DMP_OrbLineO6MJOM,DMP_IncO6JG,DMP_MJOM2025}.

 For the fields $\F_q$ with $q=2,3,4$, in the literature see, e.g., \cite{DMP_PointLineInc,DMP_PlLineIncJG,DMP_OrbLineMJOM}, the orbits and incidence matrices are considered only for special subgroups of $G_q$, called ``critical'' in this paper (Definition \ref{def3:critic}, Lemma \ref{lem3:coincid}), but for the group $G_q$ itself, the point, plane, and line orbits under it and the corresponding incidence matrices have not been considered; thus, \emph{for the fields $\F_q$ with  $q=2,3,4$, Problems A and B remain open.}

 \emph{In this paper, we solve Problems A and B for the fields $\F_q$ with $q=2,3,4$.} We classify the points, planes, and lines orbits under $G_q$ and obtain the corresponding matrices of the incidences between points, planes, and lines partitioned into the orbits.

The paper is organized as follows. Section \ref{sec_prelimin} contains preliminaries and notation. Useful relations are given in Section \ref{sed3:useful}. Orbits and incidence matrices for $q = 2,3$, and $4$ are considered in Sections  \ref{sec4:q=2},
\ref{sec5:q=3}, and \ref{sec6:q=4}, respectively.

\section{Preliminaries and notation}\label{sec_prelimin}
In this section, based on \cite[Chapters 15.2, 21, Appendix III]{Hirs_PG3q}, \cite[Section 6.4]{Hirs_PGFF}, \cite{Adriaensen,BDMP-TwCub,Block1967,BlokPelSzo,CerPavDM,DMP_PointLineInc,DMP_PlLineIncJG,DMP_OrbLineMJOM,%
DMP_OrbLineO6MJOM,DMP_IncO6JG,DMP_MJOM2025,GulLav,KaPatPradOrbits,KaPrad-q3orbits,KaPradInciden}, we define objects connected with the twisted cubic, and give their known properties that hold for $q\ge5$ (if we note this) or for all $q\ge2$ (if remarks are absent or we note this).

The space $\PG(3,q)$ contains $\theta_{3,q}$ points and planes, and $\beta_{3,q}$ lines,
\begin{equation}\label{eq2:theta-beta}
  \theta_{3,q}=q^3+q^2+q+1,~\beta_{3,q}=(q^2+1)(q^2+q+1).
\end{equation}
Let $\Pb(x_0,x_1,x_2,x_3)$ be a point of $\PG(3,q)$ with homogeneous coordinates $x_i\in\F_{q}$; the rightmost nonzero coordinate is equal to $1$. Let  $P(t)$, $t\in\F_q^+$, be a point such that
$P(t)=\Pb(t^3,t^2,t,1)\T{ if }t\in\F_q,~P(\infty)=\Pb(1,0,0,0)$.
Let $\Cs\subset\PG(3,q)$ be the \emph{twisted cubic} consisting of $q+1$ points.
We consider $\Cs$ in the \emph{canonical form}
\begin{equation}\label{eq2:cubic}
\Cs=\{P(t)\,|\,t\in\F_q^+\}.
\end{equation}
Let $\boldsymbol{\pi}(c_0,c_1,c_2,c_3)\subset\PG(3,q)$, $c_i\in\F_q$, be the plane with equation
$c_0x_0+c_1x_1+c_2x_2+c_3x_3=0.$ Let $\pi_\T{osc}(t)$ be
an \emph{osculating plane} through the point $P(t)$ such that
$\pi_\T{osc}(t)=\boldsymbol{\pi}(1,-3t,3t^2,-t^3)$  if $t\in\F_q$; $\pi_\T{osc}(\infty)=\boldsymbol{\pi}(0,0,0,1)$.
The $q+1$ osculating planes form the \emph{osculating developable} $\Gamma$ to $\Cs$ which is a \emph{pencil of planes} for $q\equiv0\pmod3$.

A \emph{chord} of $\Cs$ is a line through a pair of real points of $\Cs$ or a pair of complex conjugate points. In the last  case it is an \emph{imaginary chord}. If the real points are distinct, it is a \emph{real chord}. If the real points coincide with each other, it is a \emph{tangent} to $\Cs$. The tangent to $\Cs$ at the point $P(t)$ can be defined as a line with the coordinate vector $(t^4,2t^3,3t^2,t^2,-2t,1)$.

An \emph{axis} of $\Gamma$ is a line of $\PG(3,q)$ which is the meet of a pair of real planes or complex conjugate planes of $\Gamma$. In the last case it is an \emph{imaginary axis}. If the real planes are distinct it is a \emph{real axis}; if they coincide with each other it is a \emph{tangent} to $\Cs$.

For $q\not\equiv0\pmod3$, the null polarity $\Ak$ \cite[Theorem~21.1.2]{Hirs_PG3q}, \cite[Sections 2.1.5, 5.3]{Hirs_PGFF}, is given by
\begin{align}\label{eq2_null_pol}
&\Pb(x_0,x_1,x_2,x_3)\Ak=\boldsymbol{\pi}(x_3,-3x_2,3x_1,-x_0).
\end{align}
It interchanges $\Cs$ and $\Gamma$, and their corresponding chords and axes.

\begin{notation}\label{notation2:1}
Throughout the paper, we consider $q\equiv\xi\pmod3$, $\xi\in\{-1,0,1\}$.

The following notation is used for $q\ge2$.
\begin{align*}
  &G_q &&\T{the group of projectivities in } \PG(3,q) \T{ fixing }\Cs;\db  \\
  &\mathbf{Z}_n&&\T{cyclic group of order }n;\db  \\
  &\mathbf{S}_n&&\T{symmetric group of degree }n;\db  \\
&PGL(n,q)&&\T{the group of projectivities of }\PG(n-1,q);\db\\
&P\Gamma L(n,q)&&\T{the group of collineations of }\PG(n-1,q);\db\\
&PSL(n,q)&&\T{the group of projectivities of }\PG(n-1,q)\T{ of determinant one};\db\\
&A^{tr}&&\T{the transposed matrix of }A;\db \\
&\#S&&\T{the cardinality of a set }S;\db\\
&\overline{AB}&&\T{the line through the points $A$ and }B;\db\\
&\triangleq&&\T{the sign ``equality by definition''}.
\end{align*}
\hspace{4cm}\textbf{Types $\pi$ of planes:}
\begin{align*}
&\Gamma\T{-plane}  &&\T{an osculating plane of }\Gamma \T{ (it contains exactly 1 point of }\Cs);\db \\
&\overline{1_\Cs}\T{-plane}&&\T{a plane not in $\Gamma$ containing \emph{exactly} 1 point of }\Cs;\db \\
&d_\Cs\T{-plane}&&\T{a plane containing \emph{exactly} $d$ distinct points of }\Cs,~d=0,2,3;\db\\
&\Pk&&\T{the list of possible types $\pi$ of planes},~
\Pk\triangleq\{\Gamma,\overline{1_\Cs},0_\Cs,2_\Cs,3_\Cs\};\db\\
&\pi\T{-plane}&&\T{a plane of the type }\pi\in\Pk.
\end{align*}
\hspace{4cm}\textbf{Types $\pk$ of points:}
\begin{align*}
&\Cs\T{-point}&&\T{a point  of }\Cs;\db\\
&\mu_\Gamma\T{-point}&&\T{a point off $\Cs$ lying on \emph{exactly} $\mu$ distinct osculating planes},\db\\
&&&\mu_\Gamma\in\{0_\Gamma,1_\Gamma,3_\Gamma\}\T{ for }\xi\ne0, \mu_\Gamma\in\{(q+1)_\Gamma\}\T{ for }\xi=0;\db\\
&\Tr\T{-point}&&\T{a point off $\Cs$  on a tangent to $\Cs$ for any }\xi;\db\\
&\T{TO-point}&&\T{a point off $\Cs$ on a tangent and one osculating plane for }\xi=0;\db\\
&\RC\T{-point}&&\T{a point off $\Cs$  on a real chord;}\db\\
&\IC\T{-point}&&\T{a point on an imaginary chord (it always is off $\Cs$);}\\
&\Mk^{(\xi)}&&\T{the list of possible types $\pk$ of points},\db\\
&&&\Mk^{(\ne0)}\triangleq\{\Cs,0_\Gamma,1_\Gamma,3_\Gamma,\Tr,\RC,\IC\}\T{ for }\xi\ne0,\db\\
&&&\Mk^{(0)}\triangleq\{\Cs,(q+1)_\Gamma,\TO,\RC,\IC\}\T{ for }\xi=0;\db\\
&\pk\T{-point}&&\T{a point of the type }\pk\in\Mk^{(\xi)}.
\end{align*}
\hspace{4cm}\textbf{Types $\lambda$ of lines:}
\begin{align*}
&\RC\T{-line}&&\T{a real chord  of $\Cs$;}\db \\
&\RA\T{-line}&&\T{a real axis of $\Gamma$ for }\xi\ne0;\db \\
&\Tr\T{-line}&&\T{a tangent to $\Cs$};\db \\
&\IC\T{-line}&&\T{an imaginary chord  of $\Cs$;}\db \\
&\IA\T{-line}&&\T{an imaginary axis of $\Gamma$ for }\xi\ne0;\db \\
&\UG\T{-line}&&\T{a non-tangent unisecant in a $\Gamma$-plane;}\db \\
&\T{Un$\Gamma$-line}&&\T{a unisecant not in a $\Gamma$-plane (it is always non-tangent);}\db \\
&\T{E$\Gamma$-line}&&\T{an external line in a $\Gamma$-plane (it cannot be a chord);}\db \\
&\EnG\T{-line}&&\T{a line, external to the cubic $\Cs$, not in a $\Gamma$-plane, that is neither}\db\\
&&&\T{a chord nor an axis, see \cite[Lemma 21.1.4]{Hirs_PG3q} and its context;}\db \\
&\Ar\T{-line}&&\T{the axis of the pencil of all the $\Gamma$-planes for }\xi=0;\db \\
&\EA\T{-line}&&\T{an external line meeting the axis of $\Gamma$ for }\xi=0;\db \\
&\Lk^{(\xi)}&&\T{the list of possible types $\lambda$ of lines},\db\\
&&&\Lk^{(\ne0)}\triangleq\{\RC,\RA,\Tr,\IC,\IA,\UG,\UnG,\EG,\EnG\}\T{ for }\xi\ne0,\db\\
&&&\Lk^{(0)}\triangleq\{\RC,\Tr,\IC,\UG,\UnG, \EnG,\Ar,\EA\}\T{ for }\xi=0;\db\\
&\lambda\T{-line}&&\T{a line of the type }\lambda\in\Lk^{(\xi)}. \db\\
&&&\textbf{Types of orbits:}\db\\
&\Ns_i&&\T{the $i$-th orbit of planes under }G_q;\db\\
&\Ms_j&&\T{the $j$-th orbit of points under }G_q;\db\\
&\Lc_j&&\T{the $j$-th orbit of lines under }G_q.\db\\
&&&\textbf{Types of incidences matrices $\Ic^{\bullet}$ and submatrices $\Ic^{\bullet}_{ij}$:}\db\\
&\Ic^{\Pi\Pb}&&\T{plane-point incidence $\theta_{3,q}\times\theta_{3,q}$ matrix in }\PG(3,q);\db\\
&\Ic^{\Pi\Pb}_{ij}&&\T{an $\#\Ns_i\times\#\Ms_j$ submatrix of the matrix $\Ic^{\Pi\Pb}$ in }\PG(3,q),\\
&&&\T{given by orbits }\Ns_i,~\Ms_j;\db\\
&\Ic^{\Lambda\Pb}&&\T{line-point incidence $\beta_{3,q}\times\theta_{3,q}$ matrix in }\PG(3,q);\db\\
&\Ic^{\Lambda\Pb}_{ij}&&\T{an $\#\Lc_i\times\#\Ms_j$ submatrix of the matrix $\Ic^{\Lambda\Pb}$ in }\PG(3,q),\\
&&&\T{given by orbits }\Lc_i,~\Ms_j;\db\\
&\Ic^{\Lambda\Pi}&&\T{line-plane incidence $\beta_{3,q}\times\theta_{3,q}$ matrix in }\PG(3,q);\db\\
&\Ic^{\Lambda\Pi}_{ij}&&\T{an $\#\Lc_i\times\#\Ns_j$ submatrix of the matrix $\Ic^{\Lambda\Pi}$ in }\PG(3,q),\\
&&&\T{given by orbits }\Lc_i,~\Ns_j.
\end{align*}
\end{notation}

\begin{theorem}\label{th2:Hirs}
\emph{\cite[Chapter 21]{Hirs_PG3q}} Let $q\ge5$. The following properties of the twisted cubic $\Cs$ of \eqref{eq2:cubic} hold:

\textbf{\emph{(i)}}
We have $G_q\cong PGL(2,q)$, the group $G_q$ acts triply transitively on $\Cs$, and the matrix $\Mb$, corresponding to a projectivity of $G_q$, has the general form
  \begin{equation}\label{eq2:q ge5 matrixGq}
 \Mb=\left[
 \begin{array}{cccc}
 a^3&a^2c&ac^2&c^3\\
 3a^2b&a^2d+2abc&bc^2+2acd&3c^2d\\
 3ab^2&b^2c+2abd&ad^2+2bcd&3cd^2\\
 b^3&b^2d&bd^2&d^3
 \end{array}
  \right],~a,b,c,d\in\F_q,~ ad-bc\ne0.
  \end{equation}

\textbf{\emph{(ii)}}  Under $G_q$, there are 5 orbits $\Ns_j$ of planes:
\begin{align*}
&\Ns_1=\{\Gamma\T{-planes}\},~ \#\Ns_1=q+1;~\Ns_{2}=\{2_\Cs\T{-planes}\},~\#\Ns_2=q^2+q;\db\\
&\Ns_{3}=\{3_\Cs\T{-planes}\},~\#\Ns_3=(q^3-q)/6;~ \Ns_{4}=\{\overline{1_\Cs}\T{-planes}\},~ \#\Ns_4=(q^3-q)/2;\db\\
&\Ns_{5}=\{0_\Cs\T{-planes}\},~ \#\Ns_5=(q^3-q)/3.
\end{align*}

\textbf{\emph{(iii)}} For $q\not\equiv0\pmod 3$, under $G_q$, there are 5 orbits $\Ms_j$ of points:
\begin{align*}
&\Ms_1=\{\Cs\T{-points}\},~ \Ms_2=\{\Tr\T{-points}\},~ \Ms_3=\{3_\Gamma\T{-points}\},~ \Ms_4=\{1_\Gamma\T{-points}\},~\db\\
&\Ms_5=\{0_\Gamma\T{-points}\};~ \#\Ms_j=\#\Ns_j,~ j=1,~\ldots,~5.\db\\
&\T{If } q\equiv1\pmod 3 \T{ then } \Ms_3\cup\Ms_5=\{\RC\T{-points}\},~   \Ms_4=\{\IC\T{-points}\}.\db\\
&\T{If } q\equiv-1\pmod 3\T{ then }\Ms_3\cup\Ms_5=\{\IC\T{-points}\},~ \Ms_4=\{\RC\T{-points}\}.
\end{align*}

\textbf{\emph{(iv)}} For $q\equiv0\pmod 3$, under $G_q$, there are 5 orbits $\Ms_j$ of points:
\begin{align*}
& \Ms_1=\{\Cs\T{-points}\},\, \Ms_2=\{(q+1)_\Gamma\T{-points}\},~ \#\Ms_1=\#\Ms_2=q+1;~
\Ms_3=\{\TO\T{-points}\},\db\\
& \#\Ms_3=q^2-1;~ \Ms_4=\{\RC\T{-points}\},~ \Ms_5=\{\IC\T{-points}\};~\#\Ms_4=\#\Ms_5=(q^3-q)/2.
\end{align*}
\end{theorem}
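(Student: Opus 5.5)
The approach is to realize $G_q$ as the image of $PGL(2,q)$ acting on binary cubic forms, and then to classify planes and points through the factorization of associated binary cubics. Identify $\PG(3,q)$ with the projective space of binary cubic forms, sending $\Pb(x_0,x_1,x_2,x_3)$ to suitably weighted coefficients. A substitution $(s,t)\mapsto(as+bt,cs+dt)$ then induces on the coordinate vector exactly the matrix $\Mb$ of \eqref{eq2:q ge5 matrixGq}: the point $P(t)$ corresponds to the cube of a linear form, and the columns of $\Mb$ are the expansions of $(as+bt)^{3-i}(cs+dt)^i$. This gives an injective homomorphism $PGL(2,q)\to G_q$. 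It is injective because $\Mb$ is scalar only when $b=c=0$ and $a=d$, a check valid once $q\ge5$ (for small $q$ the scalars $\lambda$ with $\lambda^3=1$ are the issue).

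Surjectivity onto $G_q$ comes from the standard rigidity argument. A projectivity of $\PG(3,q)$ is determined by its images of $5$ points in general position, and $q+1\ge6$ points of $\Cs$ are available. Given $g\in G_q$, compose with an element of $PGL(2,q)$ so that $P(\infty),P(0),P(1)$ are fixed. The composed map permutes $\Cs$, and a projectivity of $\PG(3,q)$ fixing $\Cs$ setwise induces a projectivity of the parameter line. This is where $q\ge5$ is essential: a normal rational curve with at least $6$ points is uniquely determined by any $5$ of its points, and these points force the induced map on $\F_q^+$ to be Möbius. Triple transitivity on $\Cs$ then follows from that of $PGL(2,q)$ on $\PG(1,q)$. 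I expect this surjectivity step to be the main obstacle; I would rely on the classical fact that for $q\ge5$ the stabilizer of a normal rational curve in $PGL(4,q)$ is $PGL(2,q)$.

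For (ii), a plane $\boldsymbol{\pi}(c_0,c_1,c_2,c_3)$ meets $\Cs$ in the roots in $\F_q^+$ of the binary cubic $c_0t^3+c_1t^2+c_2t+c_3$. The action of $G_q$ on planes is the contragredient action, which corresponds to the natural $PGL(2,q)$ action on these cubics. Planes are therefore classified by the factorization type of the cubic over $\F_q$:
\begin{itemize}
\item a triple root gives a $\Gamma$-plane;
\item a double root and a simple root give a $2_\Cs$-plane;
\item three distinct roots give a $3_\Cs$-plane;
\item a linear factor times an irreducible quadratic gives an $\overline{1_\Cs}$-plane;
\item an irreducible cubic gives a $0_\Cs$-plane.
\end{itemize}
Each type is a single orbit, because $PGL(2,q)$ is transitive on ordered triples of $\PG(1,q)$ and on pairs consisting of a point of $\PG(1,q)$ and a conjugate pair in $\PG(1,q^2)\setminus\PG(1,q)$, and the same holds for conjugate triples in $\PG(1,q^3)$. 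Counting roots gives $q+1$, $(q+1)q$, $\binom{q+1}{3}$, $(q+1)(q^2-q)/2$ and $(q^3-q)/3$. These sizes sum to $\theta_{3,q}$, confirming that the orbits exhaust all planes.

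For (iii), when $q\not\equiv0\pmod3$, apply the null polarity $\Ak$ of \eqref{eq2_null_pol}. It commutes with $G_q$, since $\Mb$ preserves the associated symplectic form up to scalar, and it maps $\Cs$ to $\Gamma$. Hence it carries plane orbits bijectively to point orbits with the same sizes. It sends a $3_\Cs$-plane to a point on $3$ osculating planes, a $2_\Cs$-plane to a T-point, an $\overline{1_\Cs}$-plane to a $1_\Gamma$-point and a $0_\Cs$-plane to a $0_\Gamma$-point. The chord descriptions follow from writing the point as a cubic form $\alpha u^3+\beta v^3$ in suitable coordinates. For $q\equiv1\pmod3$, where cube roots of unity lie in $\F_q$, a real chord point has either $3$ or $0$ osculating planes according to whether $\beta/\alpha$ is a cube; for $q\equiv-1\pmod3$, cubing is a bijection, so real chords give $1_\Gamma$-points while imaginary chords split into the $3_\Gamma$ and $0_\Gamma$ classes.

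For (iv), when $q\equiv0\pmod3$, there is no polarity, so I classify points directly via binary cubic forms in characteristic $3$. The osculating planes all contain the axis $\Pb(0,1,0,0)\Pb(0,0,1,0)$-type line, and its points are the $(q+1)_\Gamma$-points; there are $q+1$ of them and they form one orbit. The remaining off-curve points lie on a unique chord. Tangent points off the axis form the TO-orbit, of size $(q+1)(q-1)$, and real-chord and imaginary-chord points form the final two orbits, each of size $(q^3-q)/2$, with transitivity checked via stabilizers of a pair of points or of a conjugate pair.
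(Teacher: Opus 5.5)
\textbf{Gap: surjectivity in (i).} This is the only step where $q\ge5$ is used, and you have not proved it. The fact you invoke is false: five points in general position do not determine a twisted cubic. In fact $(q-2)(q-3)$ twisted cubics of $\PG(3,q)$ pass through any frame; the correct rigidity statement involves six points. More importantly, the claim that a projectivity fixing $\Cs$ setwise induces a M\"obius map on $\F_q^+$ is exactly the content of $G_q\cong PGL(2,q)$. So falling back on ``the classical fact'' restates (i) rather than proving it.

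A short self-contained fix:
\begin{itemize}
\item For distinct $a,b\in\F_q^+$, the planes through $\overline{P(a)P(b)}$ are those whose binary cubic is divisible by the quadratic form with roots $a,b$. So $t\mapsto\langle P(a),P(b),P(t)\rangle$ extends to a projectivity from $\PG(1,q)$ onto this pencil.
\item Let $g\in G_q$ induce the permutation $\sigma$ of $\F_q^+$. Then $g$ maps this pencil projectively onto the pencil through $\overline{P(\sigma a)P(\sigma b)}$. Hence $\sigma$ agrees on $\F_q^+\setminus\{a,b\}$ with a M\"obius map $\phi_{ab}$.
\item For $q\ge5$, the maps $\phi_{ab},\phi_{ac},\phi_{bc}$ pairwise agree on the $q-2\ge3$ points of $\F_q^+\setminus\{a,b,c\}$. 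Hence they coincide, and $\sigma=\phi_{ab}$ on all of $\F_q^+$.
\item Composing $g$ with the inverse of the corresponding $\Mb$ gives a projectivity fixing $\Cs$ pointwise, hence the identity.
\end{itemize}
For $q=4$ only $q-2=2$ common points remain, consistent with $G_4\cong\mathbf{S}_5$.

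\textbf{Injectivity remark.} This is misplaced. $\Mb$ scalar forces $c^3=b^3=0$ and then $a=d$ over every field. So $PGL(2,q)\to G_q$ is injective for all $q$; what fails for $q\le4$ is surjectivity.

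\textbf{Parts (ii)--(iv).} These are sound. They follow the classical route of Hirschfeld's Chapter~21, which the paper only cites: classify binary cubics by factorization, transfer to points by the null polarity $\Ak$ for $q\not\equiv0\pmod3$, and use the axis of $\Gamma$ plus the chord partition in characteristic $3$. You do use two facts silently, and you should state them:
\begin{itemize}
\item every point off $\Cs$ lies on exactly one real chord, tangent, or imaginary chord;
\item the pole of a $2_\Cs$-plane lies on the self-polar tangent contained in that plane, which is what makes it a T-point.
\end{itemize}
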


\begin{theorem}\label{th2:LineClasses}
\emph{\cite[pp.\,2-3, Lemma 2.1]{CerPavDM}, \cite[Chapter 21]{Hirs_PG3q}}
Let $q\ge5$.
  The lines of $\PG(3,q)$ can be partitioned into classes called $\Oc_i$ and $\Oc'_i$, each of which is a union of orbits under~$G_q$.

\textbf{\emph{(i)}} $q\not\equiv0\pmod3.~ Then~ \Oc'_i=\Oc_i\Ak,\,\#\Oc'_i=\#\Oc_i,\,i=1,\ldots,6;~\Oc'_i=\Oc_i,\,i=2,4,6$.
\begin{align*}
  &\Oc_1=\{\RC\T{-lines}\},\Oc'_1=\{\RA\T{-lines}\},\#\Oc_1=(q^2+q)/2;\,\Oc_2=\{\Tr\T{-lines}\},\#\Oc_2=q+1;\db\\
  &\Oc_3=\{\IC\T{-lines}\},\Oc'_3=\{\IA\T{-lines}\},\#\Oc_3=(q^2-q)/2;~\Oc_4=\{\UG\T{-lines}\},\#\Oc_4=q^2+q;\db\\
  &\Oc_5=\{\UnG\T{-lines}\},\Oc'_5=\{\EG\T{-lines}\},\#\Oc_5=q^3-q;\db\\
  &\Oc_6=\{\EnG\T{-lines}\},~\#\Oc_6=(q^2-q)(q^2-1).
\end{align*}

\textbf{\emph{(ii)}} $q\equiv0\pmod3,~q>3$. Then the classes $\Oc_1,\ldots,\Oc_6$ are as in the case (i); also, $\Oc_7=\{\Ar\T{-line}\},~\#\Oc_7=1$; $\Oc_8=\{\EA\T{-lines}\},~\#\Oc_8=(q+1)(q^2-1)$.
\end{theorem}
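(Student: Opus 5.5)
The plan is to obtain Theorem \ref{th2:LineClasses} by counting. We use the transitivity of $G_q\cong PGL(2,q)$ on $\Cs$ (Theorem \ref{th2:Hirs}(i)), the description of the plane and point orbits (Theorem \ref{th2:Hirs}(ii)--(iv)), and, for $q\not\equiv0\pmod3$, the null polarity $\Ak$ of \eqref{eq2_null_pol}. The defining properties of the classes are invariant under $G_q$: being a chord (real, tangent or imaginary), lying in a $\Gamma$-plane, and meeting $\Cs$ in a given number of points. Each class is therefore automatically a union of $G_q$-orbits. So the content of the theorem is that these classes partition the set of lines, together with the stated cardinalities and the relations with $\Ak$.

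\textbf{Chords and tangents.} The sizes follow directly. There are $\binom{q+1}{2}$ real chords and $q+1$ tangents. Imaginary chords correspond to pairs of conjugate points of $\Cs$ over $\F_{q^2}$, i.e. to the elements of $\F_{q^2}\setminus\F_q$ up to conjugation, which gives $(q^2-q)/2$. These lines are pairwise distinct because $\Cs$ is an arc: no three points are collinear, even over $\F_{q^2}$.

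\textbf{Unisecants.} Every point of $\Cs$ lies on $q^2+q+1$ lines. The lines through it other than chords, counted over the $q+1$ points, give the unisecants, $(q+1)(q^2+q+1-q-1)=(q+1)q^2$ in total. Among them, the non-tangent lines through $P(t)$ inside $\pi_{\T{osc}}(t)$ number $q$. A unisecant in a $\Gamma$-plane must pass through the osculation point of that plane, because $\pi_{\T{osc}}(t)\cap\Cs=\{P(t)\}$. Hence $\#\Oc_4=q(q+1)$ and $\#\Oc_5=(q+1)q^2-q(q+1)=q^3-q$. The external lines then number $\beta_{3,q}$ minus everything counted so far, and the $\EG$- and $\EnG$-lines split this remainder.

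\textbf{The case $\xi\ne0$.} Since $\Ak$ interchanges $\Cs$ and $\Gamma$, it maps chords to axes and $\Gamma$-unisecants to $\Cs$-unisecants of $\Gamma$. This gives $\Oc'_i=\Oc_i\Ak$ with equal sizes. Tangents are self-polar, and one checks that $\Oc_4$ and $\Oc_6$ are $\Ak$-invariant, whereas $\Oc_5\Ak$ is the class of external lines in $\Gamma$-planes.

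\textbf{The case $\xi=0$.} Here $\Gamma$ is a pencil with axis $\Ar$. The $\EA$-lines are the lines meeting $\Ar$, minus the ones already counted, and we count them directly.

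\textbf{Main obstacle.} The hard step is showing that the classes are pairwise disjoint: for instance, that a real axis is never a chord, and that the lines in $\Oc_6$ are exactly the leftover ones. For this we rely on Hirschfeld's \cite[Lemma 21.1.4]{Hirs_PG3q}.
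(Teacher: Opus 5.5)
The paper gives no proof of this theorem; it is quoted from \cite[Chapter 21]{Hirs_PG3q} and \cite[Lemma 2.1]{CerPavDM}. Your outline rests on the same source. Its only substantive claim is that the listed classes are pairwise disjoint and exhaust the lines of $\PG(3,q)$, and you import exactly that from Hirschfeld's Chapter 21. Your additional work is correct: the counts of real and imaginary chords, tangents and non-tangent unisecants, the use of $\Ak$ for $\Oc'_i=\Oc_i\Ak$, and the invariance of $\Oc_2,\Oc_4,\Oc_6$. The $\EA$ count also works: every line meeting $\Ar$ lies in one of the $q+1$ planes through $\Ar$, and these are exactly the $\Gamma$-planes, so the count is $(q+1)(q^2+q)-(q+1)-(q^2+q)=(q+1)(q^2-1)$. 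Two statements, however, are wrong as written.

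\textbf{First: the remainder contains more than $\EG$- and $\EnG$-lines.} After removing the $\RC$-, $\Tr$-, $\IC$-, $\UG$- and $\UnG$-lines, what is left are the $q^4$ lines that are neither chords nor unisecants. The $\EG$- and $\EnG$-lines alone do not make up this remainder. For $\xi\ne0$ it also contains the $\RA$- and $\IA$-lines. For $\xi=0$ it is the union of $\Ar$, the $\EA$-lines and the $\EnG$-lines. The subtraction therefore gives $\#\Oc_6=q^4-q^2-(q^3-q)$, respectively $\#\Oc_6=q^4-1-(q+1)(q^2-1)$, and both equal $(q^2-q)(q^2-1)$.

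\textbf{Second: $\Oc_5\Ak$ is not the set of all external lines in $\Gamma$-planes.} A real axis $\pi_{\T{osc}}(s)\cap\pi_{\T{osc}}(t)$ is external and lies in two $\Gamma$-planes, yet it is the image of the real chord $\overline{P(s)P(t)}$, not of a $\UnG$-line. The image of $\Oc_5$ is the set of external lines lying in \emph{exactly one} $\Gamma$-plane. This is also how $\EG$ must be read for the classes to be disjoint. A direct check confirms it: a $\Gamma$-plane contains the tangent and $q$ $\UG$-lines through its osculation point, plus $q$ distinct real axes. That leaves $q^2-q$ $\EG$-lines per plane, hence $(q+1)(q^2-q)=q^3-q$ in total, agreeing with $\#\Oc_5$.
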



 \begin{notation}\label{notation2:2}
 Let $q\equiv\xi\pmod3$, $\xi\in\{-1,0,1\}$. Let $W$ be a group of projectivities in $\PG(3,q)$ isomorphic  to $PGL(2,q)$, fixing $\Cs$ of \eqref{eq2:cubic}, and, let the matrix, corresponding to a projectivity of $W$, have the general form \eqref{eq2:q ge5 matrixGq}. For $q\ge5$, we have $W=G_q$; for $q=2,3,4$, $W$ is a subgroup of $G_q$, see below Sections \ref{sec4:q=2}, \ref{sec5:q=3}, \ref{sec6:q=4}. If for $\lambda$-lines, $\lambda\in\Lk^{(\xi)}$, there are $\ge2$ orbits under $W$, then a line of the type $\lambda$, belonging to the $u$-th orbit, is denoted as a line of the type $\lambda_u$. For instance, see in Theorem \ref{th2:line orbits q>=5}, lines of the types $\UG_u,u=1,2$, and $\EnG_u,u=1,\ldots,2q-2+\xi$, for $q=2^n\ge8$; $\UnG_u,u=1,2$, $\EA_u,u=1,2,3$, and $\EnG_u,u=1,\ldots,2q-3$, for $q=3^n\ge9$.
 \end{notation}


\begin{theorem}\label{th2:line orbits q>=5}
 \emph{\cite{BlokPelSzo,CerPavDM}, \cite[Table 1]{DMP_PointLineInc}, \cite[Section 3, Theorem 8.1, Conjecture 8.2]{DMP_OrbLineMJOM}, \cite{DMP_OrbLineO6MJOM,DMP_MJOM2025}, \cite[Chapter 21]{Hirs_PG3q}, \cite{KaPrad-q3orbits}} Let $q\ge5$, $q\equiv\xi\pmod 3$.

 \textbf{\emph{(i)}} Let $q=2^n$, $n\ge3$. Then $\xi\in\{-1,1\}$.

 Under $G_q$, we have one orbit for all $\lambda$-lines with $\lambda\in\Lk^{(\ne0)}\setminus\{\UG,\EnG\}$, two orbits for $\UG$-lines, and $2q-2+\xi$ orbits for $\EnG$-lines.
 In total,  there are the following $2q +7+\xi$ orbits $\Lc_i$ of lines:
 \begin{align*}
& \Lc_1=\{\RC\T{-lines}\},~ \Lc_2=\{\RA\T{-lines}\},~ \#\Lc_1=\#\Lc_2=(q^2+q)/2;~
\Lc_3=\{\Tr\T{-lines}\},\db \\
&\#\Lc_3=q+1;~\Lc_4=\{\IC\T{-lines}\},~ \Lc_5=\{\IA\T{-lines}\},~ \#\Lc_4=\#\Lc_5=(q^2-q)/2;\db\\
&\Lc_6=\{\UG_1\T{-lines}\},~ \#\Lc_6=q+1,~ \Lc_7=\{\UG_2\T{-lines}\},~ \#\Lc_7=q^2-1;\db\\
&\Lc_8=\{\UnG\T{-lines}\},~ \Lc_9=\{\EG\T{-lines}\},~ \#\Lc_8=\#\Lc_9=q^3-q;\db\\
&\Lc_{9+k}=\{\EnG_k\T{-lines}\},~ \#\Lc_{9+k}=(q^3-q)/(2+\xi),~ k=1,\ldots,2+\xi;\db\\
&\Lc_{11+\xi+j}=\{\EnG_{2+\xi+j}\T{-lines}\},~ \#\Lc_{11+\xi+j}=(q^3-q)/2,~ j=1,\ldots,2q-4.
 \end{align*}

 \textbf{\emph{(ii)}} Let $q=3^n$, $n\ge2$.

 Under $G_q$, we have one orbit for all $\lambda$-lines with  $\lambda\in\Lk^{(0)}\setminus\{\UnG,\EA,\EnG\}$, two orbits for $\UnG$-lines, three orbits for $\EA$-lines, and $2q-3$ orbits for  $\EnG$-lines. In total, there are the following $2q +7$ orbits $\Lc_i$ of lines:
 \begin{align*}
&\Lc_1=\{\RC\T{-lines}\},~ \#\Lc_1=(q^2+q)/2;~ \Lc_2=\{\Tr\T{-lines}\},~ \#\Lc_2=q+1;\db\\
&\Lc_3=\{\IC\T{-lines}\},~ \#\Lc_3=(q^2-q)/2; ~\Lc_4=\{\UG\T{-lines}\},~ \#\Lc_4=q^2+q;\db\\
&\Lc_5=\{\UnG_1\T{-lines}\},~ \Lc_6=\{\UnG_2\T{-lines}\},~ \#\Lc_5=\#\Lc_6=(q^3-q)/2;\db\\
&\Lc_7=\{\Ar\T{-lines}\},~ \#\Lc_7=1;~ \Lc_8=\{\EA_1\T{-lines}\},~ \#\Lc_8=q^3-q;\db\\
&\Lc_9=\{\EA_2\T{-lines}\},~ \Lc_{10}=\{\EA_3\T{-lines}\},~ \#\Lc_9=\#\Lc_{10}=(q^2-1)/2;\db\\
&\Lc_{10+k}=\{\EnG_k\T{-lines}\},~ \#\Lc_{10+k}=q^3-q,~ k=1,\ldots,q/3;\db\\
&\Lc_{10+q/3+j}=\{\EnG_{q/3+j}\T{-lines}\},~ \#\Lc_{10+q/3+j}=(q^3-q)/2,~ j=1,\ldots,q-1;\db\\
&\Lc_{9+4q/3+i}=\{\EnG_{4q/3-1+i}\T{-lines}\},~ \#\Lc_{9+4q/3+i}=(q^3-q)/4,~ i=1,\ldots,(2q-6)/3.
\end{align*}
\end{theorem}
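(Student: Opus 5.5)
The plan is to work class by class, starting from the partition of Theorem~\ref{th2:LineClasses}. Each class $\Oc_i$, $\Oc'_i$ is $G_q$-invariant, so it suffices to split each class into $G_q$-orbits. By Theorem~\ref{th2:Hirs}(i), $G_q\cong PGL(2,q)$ with matrices \eqref{eq2:q ge5 matrixGq}, so $\#G_q=q^3-q$. For each orbit I would choose an explicit representative line $\ell$, compute its stabilizer $G_\ell$ by solving for the $(a,b,c,d)$ that make $\Mb$ fix $\ell$, and use orbit--stabilizer, $\#(\ell G_q)=(q^3-q)/\#G_\ell$. As a global check, the orbit sizes must add up to the class sizes of Theorem~\ref{th2:LineClasses} and in total to $\beta_{3,q}$ of \eqref{eq2:theta-beta}.

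\emph{Classes other than $\Oc_6$.} Triple transitivity of $G_q$ on $\Cs$ makes $\RC$- and $\Tr$-lines single orbits. $\IC$-lines are determined by a conjugate pair $\{t,t^q\}$ with $t\in\F_{q^2}\setminus\F_q$; $PGL(2,q)$ is transitive on such pairs, so $\IC$-lines also form one orbit. For $\xi\ne0$ (all $q=2^n$), the null polarity $\Ak$ commutes with $G_q$ and swaps $\Cs$ and $\Gamma$. Hence $\RA$, $\IA$, $\EG$ give exactly as many orbits, of the same sizes, as $\RC$, $\IC$, $\UnG$; in particular one gets $\#\Lc_2=\#\Lc_1$ and $\#\Lc_5=\#\Lc_4$.

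For a unisecant through $P(\infty)$, the stabilizer of $P(\infty)$ is the affine group $\{t\mapsto at+b\}$ of order $q(q-1)$. It acts on the $q^2$ non-tangent lines through $P(\infty)$ that lie off $\pi_\T{osc}(\infty)$ ($\UnG$), and on the $q$ non-tangent lines through $P(\infty)$ inside $\pi_\T{osc}(\infty)$ ($\UG$). I would write these lines in coordinates and read off the orbits of the affine group. In characteristic $2$ the $\UG$-lines through $P(\infty)$ split as $1+(q-1)$: one of them is fixed, presumably the line through the nucleus-type point where all tangents in that plane meet. This gives $\UG_1$ and $\UG_2$ of sizes $q+1$ and $q^2-1$. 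In characteristic $3$, $\UG$ stays one orbit, but $\UnG$ splits into two orbits of size $(q^3-q)/2$ according to a square/non-square condition on a parameter.

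For $q=3^n$ the extra classes are treated directly. $\Ar$ is fixed by $G_q$. For $\EA$-lines, which meet $\Ar$ in a point, I would use the point orbits of Theorem~\ref{th2:Hirs}(iv) on $\Ar$ together with the stabilizer of a point of $\Ar$. This should yield one orbit of size $q^3-q$ and two orbits of size $(q^2-1)/2$, again split by a quadratic character.

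\emph{The class $\Oc_6$ (main obstacle).} Here the stabilizer computation alone does not produce a classification, since there are many orbits with varying stabilizers $\mathbf{Z}_1,\mathbf{Z}_2,\mathbf{Z}_3,\mathbf{Z}_4$, etc. I would follow the Klein-quadric approach. Via the Pl\"ucker embedding, the line space is a $G_q$-module which, for $\xi\ne0$, decomposes as the binary quartic forms $V_4$ plus a trivial part. A line of $\Oc_6$ then corresponds to a quartic form in a nonsplit class, subject to the Klein-quadric equation. So I would classify binary quartic forms over $\F_q$ up to $PGL(2,q)$ by their root pattern over extensions together with a cross-ratio/$j$-type invariant, then intersect with the quadric and count orbits with their stabilizers. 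The difficulty lies in characteristics $2$ and $3$, where $V_4$ is not irreducible and the usual invariants degenerate. There I would first try to use the explicit families $\ell_\mu$ and $\Ls_\rho$ of \cite{DMP_OrbLineO6MJOM,DMP_MJOM2025}, which already account for a large part of $\Oc_6$ and come with known stabilizers. I would then finish by counting: the remaining lines must form orbits whose sizes fill $\#\Oc_6=(q^2-q)(q^2-1)$. Those orbit counts can be matched against the number of $G_q$-orbits of $\Oc_6$ computed by Burnside's lemma, using fixed-line counts for each conjugacy class of $PGL(2,q)$. This yields the stated numbers $2q-2+\xi$ orbits for $q=2^n$ and $2q-3$ orbits for $q=3^n$, together with their sizes.
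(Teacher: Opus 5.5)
Your treatment of all classes except $\Oc_6$ is sound; it is essentially the stabilizer method of \cite{DMP_OrbLineMJOM}. There is one slip in the unisecant count. Through $P(\infty)$ there are only $q^2-q$ $\UnG$-lines, because the $q$ real chords $\overline{P(\infty)P(t)}$ also lie off $\pi_\T{osc}(\infty)$. The computation you defer goes through as follows:
\begin{itemize}
\item Write a $\UnG$-line as $\overline{P(\infty)\Pb(0,x_1,x_2,1)}$ with $x_1\ne x_2^2$. The map $t\mapsto at+b$ sends $\delta=x_1-x_2^2$ to $a^2\delta$. This gives one orbit for even $q$ and two (squares versus non-squares) for $q=3^n$.
\item On the $\UG$-lines $\overline{P(\infty)\Pb(0,\lambda,1,0)}$ the same map acts as $\lambda\mapsto a\lambda+2b$. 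In characteristic $2$ the fixed line is therefore $x_1=x_3=0$, the line along which every tangent meets $\pi_\T{osc}(\infty)$.
\end{itemize}

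The genuine gap is the class $\Oc_6$, which carries most of the content of the statement. By your own caveat, the quartic-form route as you set it up is not available in characteristics $2$ and $3$, and these are the only characteristics in the theorem. Making that route work in characteristic $3$ is precisely the content of \cite{KaPrad-q3orbits}. Your fallback does not determine the orbit sizes. The families $\Os_\mu$, $\Os_\Lc$, $\Ls_\rho$ cover only part of $\Oc_6$. Burnside's lemma, used as you use it, adds only one number, the orbit count. Together with $\#\Oc_6$ this gives just two linear equations for the multiplicities of the admissible orbit sizes. You have not shown which sizes are admissible; that is exactly the stabilizer question you set aside. Even with the correct list for $q=3^n$, namely $q^3-q$, $(q^3-q)/2$, $(q^3-q)/4$, the system is underdetermined. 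In the literature, the $\Oc_6$ sizes in these characteristics rest on the complete classifications of \cite{CerPavDM} (even $q$) and \cite{KaPrad-q3orbits} (characteristic $3$), not on a count. To close the gap you need either such a classification or a refined count:
\begin{enumerate}
\item prove which subgroups of $PGL(2,q)$ can stabilize an $\EnG$-line;
\item count the $\Oc_6$-lines fixed by each of those subgroups, not merely by each element class;
\item recover the number of orbits with each stabilizer by M\"obius inversion over the subgroup lattice.
\end{enumerate}
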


\begin{theorem}\label{th2:Gq q=234}
\emph{\cite[Chapter 21, Lemma  21.1.3, Appendix III]{Hirs_PG3q}, \cite[Section 6.4]{Hirs_PGFF}}

For $q=2,3,4$, there are the following group $G_q$ of the of projectivities in $\PG(3,q)$, fixing the twisted cubic $\Cs$ of \eqref{eq2:cubic}:
\begin{align*}
& G_2\cong\mathbf{S}_3\mathbf{Z}_2^3,~ \#G_2=8\cdot\#PGL(2,2)=48;\db \\
& G_3\cong\mathbf{S}_4\mathbf{Z}_2^3,~ \#G_3=8\cdot\#PGL(2,3)=192;\db \\
&G_4\cong\mathbf{S}_5\cong P\Gamma L(2,4)\cong\mathbf{Z}_2PGL(2,4),~ \#G_4=2\cdot\#PGL(2,4)=120.
\end{align*}
 Also, $\mathbf{S}_3\cong PGL(2,2)\cong PSL(2,2)$, $\mathbf{S}_4\cong PGL(2,3)$.
\end{theorem}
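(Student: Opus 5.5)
The plan is to exploit the fact that for $q\le4$ the twisted cubic $\Cs$ of \eqref{eq2:cubic} has only $q+1\le5$ points. Since $\Cs$ is an arc, any four of its points are linearly independent. So $G_q$ is nothing but the setwise stabilizer in $PGL(4,q)$ of a small set of points in general position. Its order and structure can then be read off from two facts: the action of $G_q$ on the points of $\Cs$, and the kernel of that action, i.e.\ the pointwise stabilizer.

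First I will check the general position concretely. The points $P(\infty),P(0),P(1)$, and $P(-1)$ for $q=3$ (resp.\ $P(\alpha)$, $\alpha\in\F_4$, for $q=4$) give Vandermonde-type determinants, which are nonzero. So for $q=2$ we have $3$ independent points, for $q=3$ we have $4$ independent points (a basis), and for $q=4$ we have a frame of $5$ points. Because $PGL(4,q)$ acts regularly on ordered frames, every permutation of the points of $\Cs$ is induced by some element of $G_q$ in each case. This is immediate for $q=4$; for $q=2,3$, extend the points to a frame by adding suitable points. Hence $G_q$ maps onto $\mathbf{S}_{q+1}$, and $\#G_q=(q+1)!\cdot\#K$, where $K$ is the pointwise stabilizer of $\Cs$.

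Next I compute $K$ in each case.
\begin{itemize}
\item For $q=4$, $K$ is trivial since $\Cs$ is a frame. Hence $G_4\cong\mathbf{S}_5$ and $\#G_4=120$.
\item For $q=3$, $K$ consists of the diagonal matrices with respect to the basis $P(\infty),P(0),P(1),P(-1)$, modulo scalars. So $K\cong(\F_3^*)^4/\F_3^*\cong\mathbf{Z}_2^3$, and $\#G_3=24\cdot8=192$.
\item For $q=2$ there are no nontrivial scalars. Complete $e_1=P(\infty)$, $e_4=P(0)$, $u=P(1)$ to a basis by a fourth vector $w$. An element of $K$ fixes $e_1,e_4,u$ and sends $w$ to $w+v$ with $v\in\langle e_1,e_4,u\rangle$, which gives $8$ choices. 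These maps form an elementary abelian group $\mathbf{Z}_2^3$, and $\#G_2=6\cdot8=48$.
\end{itemize}
In all three cases $K$ is normal, being the kernel of the action. This yields the stated shape $\mathbf{S}_{q+1}\mathbf{Z}_2^3$ for $q=2,3$.

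Finally I will derive the isomorphisms. $PGL(2,q)$ acts faithfully and sharply $3$-transitively on the $q+1$ points of $\PG(1,q)$, and has order $(q+1)q(q-1)$. This gives $PGL(2,2)\cong\mathbf{S}_3$ (which equals $PSL(2,2)$, since every element of $\F_2^*$ is a square) and $PGL(2,3)\cong\mathbf{S}_4$. For $q=4$, the matrices \eqref{eq2:q ge5 matrixGq} give the subgroup $W\cong PGL(2,4)$ of index $2$ in $G_4$. The extra involution comes from the Frobenius map $x_i\mapsto x_i^2$, which fixes $\Cs$ because $P(t)^{(2)}=P(t^2)$. Together these give $P\Gamma L(2,4)\cong\mathbf{Z}_2PGL(2,4)\cong\mathbf{S}_5$.

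The main obstacle is the structural claims rather than the orders. I must check that each permutation is indeed realized by a projectivity (the extension-to-frame argument for $q=2,3$), and I must match the abstract notation $\mathbf{S}_{q+1}\mathbf{Z}_2^3$ with the normal-kernel description above.
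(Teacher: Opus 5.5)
Your argument is correct, and it is essentially the frame argument the paper uses in Sections~\ref{sec4:q=2}--\ref{sec6:q=4} to explain this result, which it otherwise cites from Hirschfeld: you permute the $q+1$ points of $\Cs$; for $q=2,3$ you account for the $8$ admissible images of an extra frame point, which is exactly your kernel $K\cong\mathbf{Z}_2^3$; and for $q=4$ you use that $\Cs$ is itself a frame. One correction: the Frobenius map $x_i\mapsto x_i^2$ is a collineation, not a projectivity, so it is not an element of $G_4$; the element you want is the projectivity inducing the same permutation $P(t)\mapsto P(t^2)$ on $\Cs$, namely the coordinate swap $x_1\leftrightarrow x_2$ (the paper's $\Mb'''$), and with it $G_4$ acts on $\Cs\cong\PG(1,4)$ exactly as $P\Gamma L(2,4)$.
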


\begin{remark}\label{rem2:KnownIncidMatr}
  For $q=2^n\ge8$ and $q=3^n\ge9$, parameters of the incidence matrices $\Ic^{\Pi\Pb}$, $\Ic^{\Lambda\Pb}$, $\Ic^{\Lambda\Pi}$
  and  their submatrices $\Ic^{\Pi\Pb}_{ij}$, $\Ic^{\Lambda\Pb}_{ij}$, $\Ic^{\Lambda\Pi}_{ij}$ can be found in
 \cite[Tables 1,\,2]{BDMP-TwCub}, \cite[Tables 2--5]{DMP_PointLineInc}, \cite[Tables 1,\,2]{DMP_PlLineIncJG}, \cite{CerPavDM,DMP_IncO6JG,DMP_MJOM2025,KaPradInciden}.
\end{remark}

\begin{theorem}\label{th2:plane=point}
\emph{ \cite[Appendix A.2]{Adriaensen}, \cite[Corollary 2.1]{Block1967}}
Let $G\subseteq \mathrm{P}\Gamma \mathrm{L}(k,q)$ be a group of collineations of $\PG(k-1,q)$. In $\PG(k-1,q)$, under $G$, let $\Ns_i$ be the $i$-th hyperplane orbit and $\Ms_j$ be the $j$-th point orbit. Let $\mathbb{N}=\{\Ns_1,\ldots,\Ns_n\}$ be the set of all
hyperplane orbits and $\mathbb{M}=\{\Ms_1,\ldots,\Ms_m\}$ be the set of all point orbits. Then $n=m$, i.e. the numbers of hyperplane orbits and of point orbits are the same. Moreover, $(\mathbb{N},\mathbb{M})$ form a \emph{tactical decomposition} of $\PG(k-1,q)$ that means the following: there exist integers $n_{ij}$ and $m_{ij}$ such that
every hyperplane in $\Ns_i$ is incident with exactly $n_{ij}$ points of $\Ms_j$ and
every point in $\Ms_j$ is incident with exactly $m_{ij}$ hyperplanes of $\Ns_i$.
\end{theorem}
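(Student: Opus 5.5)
The tactical decomposition part is the easy half, so I will do it first and then spend the real effort on the count $n=m$. Both parts reduce to linear algebra over $\mathbb{Q}$, applied to the hyperplane-point incidence matrix $\Ic$ of $\PG(k-1,q)$, whose rows are indexed by hyperplanes and columns by points.

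For the tactical property, fix a hyperplane $H\in\Ns_i$. The number of points of $\Ms_j$ on $H$ is $\#(H\cap\Ms_j)$. For any $g\in G$ we have $Hg\cap\Ms_j=(H\cap\Ms_j)g$, because $g$ preserves incidence and fixes $\Ms_j$ setwise. So this number depends only on the orbit of $H$; call it $n_{ij}$. The same argument, with points and hyperplanes swapped, gives $m_{ij}$ as the number of hyperplanes of $\Ns_i$ through a point of $\Ms_j$. Double counting the flags in $\Ns_i\times\Ms_j$ also gives $\#\Ns_i\, n_{ij}=\#\Ms_j\, m_{ij}$.

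For $n=m$, I will use that $\Ic$ is nonsingular over $\mathbb{Q}$. Indeed, $\Ic\Ic^{tr}=(\theta_{k-2}-\theta_{k-3})I+\theta_{k-3}J$, where $\theta_r$ is the number of points of $\PG(r,q)$, $J$ is the all-ones matrix, and $\theta_{k-2}-\theta_{k-3}=q^{k-2}>0$. A matrix of the form $aI+bJ$ with $a>0$ and $b\ge0$ is positive definite, hence nonsingular.

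Each $g\in G$ acts by permutation matrices: $P_g$ on points and $Q_g$ on hyperplanes. Because $g$ preserves incidence, $Q_g\Ic=\Ic P_g$. Form the averaging operators
\[
P=\tfrac1{\#G}\textstyle\sum_g P_g,\qquad Q=\tfrac1{\#G}\sum_g Q_g .
\]
These are the projections onto the $G$-fixed vectors, so $\operatorname{rank}P=m$ and $\operatorname{rank}Q=n$, the numbers of orbits. Averaging the intertwining relation gives $Q\Ic=\Ic P$, and since $\Ic$ is invertible, $Q=\Ic P\Ic^{-1}$. Similar matrices have equal rank, so $n=m$.

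An equivalent route is to note that the $n\times m$ matrix $(n_{ij})$ represents $\Ic$ restricted to orbit-constant vectors. It is injective because $\Ic$ is, which gives $m\le n$, and the transposed argument gives $n\le m$.

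The only step that needs care is the nonsingularity of $\Ic$, since everything else is formal. Note that this argument never used that $G$ consists of projectivities rather than collineations, so it holds for any $G\subseteq P\Gamma L(k,q)$, as the statement requires.
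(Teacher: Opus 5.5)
Your proof is correct. The computation $\Ic\Ic^{tr}=q^{k-2}I+\theta_{k-3}J$, together with the fact that $\Ic$ is square (there are as many hyperplanes as points), gives invertibility of $\Ic$, and the averaged intertwining $Q\Ic=\Ic P$ then yields $n=m$.

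The paper gives no proof of its own; it only cites Block and Adriaensen. The standard argument there likewise rests on the nonsingularity of the incidence matrix, essentially your second, orbit-matrix rank route, and your averaging version is the Brauer-permutation-lemma form of the same idea.
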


\section{Further notations and useful relations}\label{sed3:useful}
\begin{definition}\label{def3:critic}
 For $q=2,3,4$, a subgroup of $G_q$, say $G_q^*$, is ``\emph{critical}'' if it
 is isomorphic  to $PGL(2,q)$, its projectivities in $\PG(3,q)$ fix $\Cs$ of \eqref{eq2:cubic}, and, finally, as the main property, the matrix, corresponding to a projectivity of $G_q^*$, has the general form \eqref{eq2:q ge5 matrixGq}.
\end{definition}

The following lemma is obvious.

\begin{lemma}\label{lem3:coincid} \emph{\cite[Theorem 6]{DMP_PointLineInc}, \cite[Theorem 3.4]{DMP_PlLineIncJG}, \cite[Theorem 3.3]{DMP_OrbLineMJOM}}
Let $q\equiv\xi\pmod 3$. For $q=2,3,4$ with $\xi=-1,0,1$, respectively,  the orbits of planes, points, and lines under the critical subgroup $G_q^*$ and the corresponding incidence matrices and submatrices  coincide with the $G_q$-orbits and incidence matrices for $q\ge5$ with the same values of $\xi$.
\end{lemma}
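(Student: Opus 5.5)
The proof rests on one observation: everything in the $q\ge5$ results that the statement uses can be traced back to the general matrix form \eqref{eq2:q ge5 matrixGq}, and a critical subgroup $G_q^*$ has exactly this form. Concretely, the orbit classifications for $q\ge5$ in Theorems \ref{th2:Hirs}, \ref{th2:LineClasses}, \ref{th2:line orbits q>=5} and the incidence matrices in \cite{DMP_PointLineInc,DMP_PlLineIncJG,DMP_OrbLineMJOM} are obtained as follows. One takes the group $W\cong PGL(2,q)$ whose elements have matrices \eqref{eq2:q ge5 matrixGq}. One then computes orbits of points, planes and lines under $W$ using only algebraic identities in $a,b,c,d\in\F_q$ and the coordinates of $P(t)$, $\pi_\T{osc}(t)$, chords and axes. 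Everything else is counted from these orbits. By Definition \ref{def3:critic}, $G_q^*$ for $q=2,3,4$ is precisely this group $W$, so I plan to check that these derivations never use $q\ge5$ except through the identification $W=G_q$. The statement then follows by rereading the cited theorems \cite[Theorem 6]{DMP_PointLineInc}, \cite[Theorem 3.4]{DMP_PlLineIncJG}, \cite[Theorem 3.3]{DMP_OrbLineMJOM}, which are formulated for the group $W$ and all $q\ge2$.

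The steps are as follows. First, I fix the family $W$ of Notation \ref{notation2:2}. I note that for $q=2,3,4$ the critical subgroup $G_q^*$ equals $W$ as a set of projectivities, since both consist of all matrices \eqref{eq2:q ge5 matrixGq} with $ad-bc\ne0$ modulo scalars. Second, I show that the orbit of a point, plane or line under $W$ is described by the same geometric type data ($\Cs$-point, $\Tr$-point, $\mu_\Gamma$-point, the plane types in $\Pk$, the line types in $\Lk^{(\xi)}$ and their subdivisions). This uses three facts: $W$ acts on $\Cs$ as $PGL(2,q)$ acts on $\F_q^+$; it maps osculating planes to osculating planes; and it preserves chords, axes and tangents. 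All three are verified by substituting $t\mapsto (at+b)/(ct+d)$, a computation valid for every $q$. Third, the orbit sizes are determined by $\#W=q^3-q$ and by stabilizer computations that depend only on $\xi$ and the characteristic. Therefore the formulas evaluated at $q=2,3,4$, with $\xi=-1,0,1$ respectively, are the $W$-orbit sizes. Fourth, the incidence numbers $n_{ij},m_{ij}$ are constant on orbits by Theorem \ref{th2:plane=point} applied to $W$ (and its analogue for lines). Hence they are given by the same expressions in $q$ and $\xi$.

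The main obstacle is the third step. Some statements for $q\ge5$ implicitly use the existence of field elements with certain properties, for example a value $\mu\in\F_q^*\setminus\{1\}$, or nonsquares, or roots of particular cubics. For small $q$ these families may be empty or may degenerate, so an orbit listed for general $q$ may be absent. This is visible in counts such as $2q-4$, $(2q-6)/3$ and $q/3$, which vanish or become non-integral for $q=2,3$. I would handle this by checking that every such count evaluates to a nonnegative integer at the relevant $(q,\xi)$ and that the corresponding orbit family is genuinely empty there. Failing a uniform argument, I would fall back on a direct verification for the three values $q=2,3,4$: enumerate the $\theta_{3,q}$ points and planes and the $\beta_{3,q}$ lines, compute their orbits under the explicit group of matrices \eqref{eq2:q ge5 matrixGq}, and compare with the tables for $q\ge5$ having the same $\xi$.
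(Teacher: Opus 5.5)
Your proposal is correct and, reduced to what actually carries it, coincides with the paper's justification. The paper calls the lemma obvious because the cited theorems already treat, for $q=2,3,4$, exactly the group of matrices \eqref{eq2:q ge5 matrixGq} (i.e.\ $G_q^*$); it also re-confirms the orbits (Theorems~\ref{th4:Gq*q=2}(ii), \ref{th5:Gq*q=3}(ii), \ref{th6:Gq*q=4}(ii)) and every table entry by direct Magma computation, which is your fallback.

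Your uniform ``the $q\ge5$ derivations transfer'' sketch (steps two to four) would not stand on its own:
\begin{itemize}
\item invariance of the types under $W$ does not give transitivity on them;
\item constancy of $t_{ij},b_{ij}$ on orbits does not give their values;
\item some $q\ge5$ inputs, e.g.\ the $\EnG$-line classification for even $q$, are only available for $q=2^n\ge8$.
\end{itemize}
Also, ``the same $\xi$'' must be read as the same $\xi$ and the same characteristic, i.e.\ Theorem~\ref{th2:line orbits q>=5}(i) for $q=2,4$ and part (ii) for $q=3$. For instance, the $\UG$-lines split into two orbits only for even $q$.
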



\begin{notation}\label{notat3:top-bot entriis}
 The following notations for orbits and incidence matrices and entries in tables are used for $q=2,3,4$.

 --
We mark by the asterisk $^*$ orbits under the critical subgroup $G_q^*$ and the corresponding submatrices of the incidence matrices. For instance, $\Lc_i^*$ is the $i$-th orbit of lines under $G_q^*$; $\Ns_j^*$ is the $j$-th orbit of planes under $G_q^*$; $\Ic^{\Lambda\Pi*}_{ij}$ is an $\#\Lc_i^*\times\#\Ns_j^*$ submatrix of the matrix $\Ic^{\Lambda\Pi}$ in $\PG(3,q)$,
given by the orbits $\Lc_i^*$, $\Ns_j^*$; and so on.

--
In tables below, for the parameters of every submatrix $\Ic_{ij}^{\Pi\Pb}$, $\Ic_{ij}^{\Pi\Pb*}$, $\Ic_{ij}^{\Lambda\Pb}$, $\Ic_{ij}^{\Lambda\Pb*}$, $\Ic_{ij}^{\Lambda\Pb}$, $\Ic_{ij}^{\Lambda\Pi*}$, we assign a cell with two entries: the top entry, say $t^\bullet_{ij}$ or $t^{\bullet*}_{ij}$, and the bottom one, say $b^\bullet_{ij}$ or $b^{\bullet*}_{ij}$, where $\bullet\in\{\Pi\Pb,\Lambda\Pb,\Lambda\Pi\}$ is clear by the context. The top entry is equal to the number of units in every row of the submatrix, whereas the bottom entry is the number of units in every column of the submatrix.
\end{notation}

The following lemma is based on the approaches of \cite[Lemma 4.12, Theorem 6.1, equation (6.1)]{BDMP-TwCub},
\cite[Lemma 1]{DMP_PointLineInc}, \cite[Lemma 4.1]{DMP_PlLineIncJG}, see also the references therein.

\begin{lemma}\label{lem3:top-bottom entries}
\begin{description}
  \item[(i)]
Let an $\#\Ns_i\times\#\Ms_j$ submatrix $\Ic_{ij}^{\Pi\Pb}$ of the $\theta_{3,q}\times\theta_{3,q}$ plane-point incidence matrix $\Ic^{\Pi\Pb}$ be given by the plane orbit $\Ns_i$ and the point orbit $\Ms_j$. Then the number of points from $\Ms_j$ in a plane of $\Ns_i$ is the same for all planes of $\Ns_i$; it is denoted by $t^{\Pi\Pb}_{ij}$ and is used as the top entry in the corresponding cell. Conversely, the number of planes from $\Ns_i$ through a point of $\Ms_j$ is the same for all points of $\Ms_j$; it is denoted by $b^{\Pi\Pb}_{ij}$ and is used as the bottom entry in the cell. Moreover,
\begin{equation}\label{eq3:plane-point}
 t^{\Pi\Pb}_{ij}\cdot\#\Ns_i=b^{\Pi\Pb}_{ij}\cdot\#\Ms_j.
\end{equation}

 For $\Ic_{ij}^{\Pi\Pb*}$, all is similar with change of $\Ns_i,\Ms_j,t^{\Pi\Pb}_{ij},b^{\Pi\Pb}_{ij}$ by $\Ns_i^*,\Ms_j^*,t^{\Pi\Pb*}_{ij},b^{\Pi\Pb*}_{ij}$.
  \item[(ii)]
Let an $\#\Lc_i\times\#\Ms_j$ submatrix $\Ic_{ij}^{\Lambda\Pb}$ of the line-point $\beta_{3,q}\times\theta_{3,q}$ incidence matrix $\Ic^{\Lambda\Pb}$ be given by the line orbit $\Lc_i$ and the point orbit $\Ms_j$. Then the number of points from $\Ms_j$ on a
line of $\Lc_i$ is the same for all lines of $\Lc_i$; it is denoted by $t^{\Lambda\Pb}_{ij}$ and is used as the top entry in the corresponding cell. Conversely, the number of lines from $\Lc_i$ through a point of $\Ms_j$ is the same for all points of $\Ms_j$; it is denoted by $b^{\Lambda\Pb}_{ij}$ and is used as the bottom entry in the cell. Moreover,
\begin{equation}\label{eq3:line-point}
 t^{\Lambda\Pb}_{ij}\cdot\#\Lc_i=b^{\Lambda\Pb}_{ij}\cdot\#\Ms_j.
\end{equation}

 For $\Ic_{ij}^{\Lambda\Pb*}$, all is similar with change of $\Lc_i,\Ms_j,t^{\Lambda\Pb}_{ij},b^{\Lambda\Pb}_{ij}$ by $\Lc_i^*,\Ms_j^*,t^{\Lambda\Pb*}_{ij},b^{\Lambda\Pb*}_{ij}$.

  \item[(iii)] Let an $\#\Lc_i\times\#\Ns_j$ submatrix $\Ic_{ij}^{\Lambda\Pi}$ of the line-plane $\beta_{3,q}\times\theta_{3,q}$ incidence matrix $\Ic^{\Lambda\Pi}$ be given by the line orbit $\Lc_i$ and the plane orbit $\Ns_j$. Then the number of planes from $\Ns_j$ through a
line of $\Lc_i$ is the same for all lines of $\Lc_i$; it is denoted by $t^{\Lambda\Pi}_{ij}$ and is used as the top entry in the corresponding cell. Conversely, the number of lines from $\Lc_i$ in a plane of $\Ns_j$ is the same for all planes of $\Ns_j$; it is denoted by $b^{\Lambda\Pi}_{ij}$ and is used as the bottom entry in the cell. Moreover,
\begin{equation}\label{eq3:line-plane}
 t^{\Lambda\Pi}_{ij}\cdot\#\Lc_i=b^{\Lambda\Pi}_{ij}\cdot\#\Ns_j.
\end{equation}

 For $\Ic_{ij}^{\Lambda\Pi*}$, all is similar with change of $\Lc_i,\Ns_j,t^{\Lambda\Pi}_{ij},b^{\Lambda\Pi}_{ij}$ by $\Lc_i^*,\Ns_j^*,t^{\Lambda\Pi*}_{ij},b^{\Lambda\Pi*}_{ij}$.
\end{description}
\end{lemma}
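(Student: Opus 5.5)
The argument is the same for all three parts: the constancy of the entries follows from the group action, and the identities follow from double counting. I write it for (i); (ii) and (iii) change only the pair of objects involved.

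\emph{Constancy of the top entry.} Take two planes $\pi,\pi'\in\Ns_i$. Since $\Ns_i$ is a $G_q$-orbit, there is $g\in G_q$ with $\pi g=\pi'$. A projectivity preserves incidence, so $g$ maps the set of points of $\pi$ bijectively onto the set of points of $\pi'$. Because $\Ms_j$ is a $G_q$-orbit, $\Ms_j g=\Ms_j$. Therefore $g$ restricts to a bijection from $\pi\cap\Ms_j$ onto $\pi'\cap\Ms_j$. Hence $\#(\pi\cap\Ms_j)$ is independent of $\pi\in\Ns_i$, and this common value is $t^{\Pi\Pb}_{ij}$.

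\emph{Constancy of the bottom entry.} Take two points $P,P'\in\Ms_j$ and $h\in G_q$ with $Ph=P'$. Then $h$ maps the planes of $\Ns_i$ through $P$ bijectively onto the planes of $\Ns_i$ through $P'$. So this number is also constant; call it $b^{\Pi\Pb}_{ij}$. (This is precisely the tactical decomposition property of Theorem~\ref{th2:plane=point}, which could alternatively be cited.)

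\emph{The identity \eqref{eq3:plane-point}.} Count the ones in the submatrix $\Ic^{\Pi\Pb}_{ij}$, i.e. the incident pairs $(\pi,P)\in\Ns_i\times\Ms_j$, in two ways.
\begin{itemize}
\item By rows: there are $\#\Ns_i$ rows, each containing $t^{\Pi\Pb}_{ij}$ ones, giving $t^{\Pi\Pb}_{ij}\cdot\#\Ns_i$.
\item By columns: there are $\#\Ms_j$ columns, each containing $b^{\Pi\Pb}_{ij}$ ones, giving $b^{\Pi\Pb}_{ij}\cdot\#\Ms_j$.
\end{itemize}
These two counts are equal, which is \eqref{eq3:plane-point}.

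\emph{Parts (ii) and (iii).} Repeat the argument with the pairs (line, point) and (line, plane) in place of (plane, point). The only input needed is that elements of $G_q$ preserve incidence between lines and points and between lines and planes. This yields \eqref{eq3:line-point} and \eqref{eq3:line-plane}.

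\emph{The starred versions.} The argument used only that the orbits are orbits of some group of collineations. So it applies verbatim to the critical subgroup $G_q^*$ and its orbits $\Ns_i^*,\Ms_j^*,\Lc_i^*$.

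No step is a real obstacle. The only point needing care is to use that each orbit is $G_q$-invariant, so that $g$ actually maps $\pi\cap\Ms_j$ into $\Ms_j$.
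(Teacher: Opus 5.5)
Your proof is correct and matches what the paper relies on. The paper only refers to the analogous lemmas in its earlier works, and those rest on the same two facts you use: invariance of the orbits under an incidence-preserving group makes every row sum and every column sum of $\Ic^{\Pi\Pb}_{ij}$, $\Ic^{\Lambda\Pb}_{ij}$, $\Ic^{\Lambda\Pi}_{ij}$ constant, and counting the ones of the submatrix by rows and by columns gives the identities. You also note that only the collineation property is used, which correctly covers the starred versions for $G_q^*$.
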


\begin{proof}
The assertions can be proved similarly to \cite[Lemma 4.12]{BDMP-TwCub},
\cite[Lemma 1, Corollary 2]{DMP_PointLineInc}, \cite[Lemma 4.1, Corollary 4.2]{DMP_PlLineIncJG},\cite[Lemma 3.2]{DMP_IncO6JG}.
\end{proof}

\begin{corollary}\label{cor4:top-bottom entries}
Let $\bullet\in\{\Pi\Pb,\Lambda\Pb,\Lambda\Pi\}$ be clear by the context. For the entries in the submatrices $\Ic_{ij}^{\Pi\Pb}$, $\Ic_{ij}^{\Lambda\Pb}$, $\Ic_{ij}^{\Lambda\Pb}$, the following holds:
\begin{align}
&\T{If }t^\bullet_{ij}=0 \T{ then }b^\bullet_{ij}=0\T{ and vice versa}; \label{eq3:=0}\db\\
&t^{\Pi\Pb}_{ij}=\frac{b^{\Pi\Pb}_{ij}\cdot\#\Ms_j}{\#\Ns_i},
~t^{\Lambda\Pb}_{ij}=\frac{b^{\Lambda\Pb}_{ij}\cdot\#\Ms_j}{\#\Lc_i},
~t^{\Lambda\Pi}_{ij}=\frac{b^{\Lambda\Pi}_{ij}\cdot\#\Ns_j}{\#\Lc_i};\label{eq3:obtain tij}\db\\
&b^{\Pi\Pb}_{ij}=\frac{t^{\Pi\Pb}_{ij}\cdot\#\Ns_i}{\#\Ms_j},
~b^{\Lambda\Pb}_{ij}=\frac{t^{\Lambda\Pb}_{ij}\cdot\#\Lc_i}{\#\Ms_j},
~b^{\Lambda\Pi}_{ij}=\frac{t^{\Lambda\Pi}_{ij}\cdot\#\Lc_i}{\#\Ns_j};\label{eq3:obtain bij}\db\\
&\sum_{j=1}^5 t^{\Pi\Pb}_{ij}= q^2+q+1,~\sum_{j=1}^5 t^{\Lambda\Pb}_{ij}= \sum_{j=1}^5 t^{\Lambda\Pi}_{ij}= q+1,~i\T{ is fixed};\label{eq3:sum j}\db\\
&\sum_{i=1}^5 b^{\Pi\Pb}_{ij}= \sum_{i=1}^{2q+7+\xi} b^{\Lambda\Pb}_{ij}=
\sum_{i=1}^{2q+7+\xi}b^{\Lambda\Pi}_{ij}=q^2+q+1,~j\T{ is fixed}.\label{eq3:sum i}
\end{align}
For the submatrices  $\Ic_{ij}^{\Pi\Pb*}$, $\Ic_{ij}^{\Lambda\Pb*}$, $\Ic_{ij}^{\Lambda\Pi*}$ we have the similar assertions.
\end{corollary}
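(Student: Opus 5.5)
The corollary follows from Lemma \ref{lem3:top-bottom entries} together with elementary double counting; each of \eqref{eq3:=0}--\eqref{eq3:sum i} will be derived in turn. For \eqref{eq3:=0}, \eqref{eq3:obtain tij} and \eqref{eq3:obtain bij}, I would use the identities \eqref{eq3:plane-point}, \eqref{eq3:line-point} and \eqref{eq3:line-plane}. These follow by counting the unit entries of the submatrix $\Ic^\bullet_{ij}$ once by rows and once by columns. Since orbit sizes are positive integers, solving each identity for $t^\bullet_{ij}$ or $b^\bullet_{ij}$ gives \eqref{eq3:obtain tij} and \eqref{eq3:obtain bij}. Moreover, one side vanishes exactly when the other does, which is \eqref{eq3:=0}.

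For \eqref{eq3:sum j}, I would fix a row index $i$ and a single object from the $i$-th orbit. A plane of $\Ns_i$ contains $q^2+q+1$ points of $\PG(3,q)$, and the point orbits $\Ms_j$ partition all points. Summing $t^{\Pi\Pb}_{ij}$ over all point orbits therefore counts every point of the plane exactly once. In the same way, a line contains $q+1$ points and lies in $q+1$ planes, which gives the line-point and line-plane sums.

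For \eqref{eq3:sum i}, I would fix a column orbit and a single point or plane in it. A point of $\PG(3,q)$ lies in $q^2+q+1$ planes and on $q^2+q+1$ lines, and a plane contains $q^2+q+1$ lines. The relevant orbits partition the planes, respectively the lines, so summing the bottom entries over all row orbits gives $q^2+q+1$.

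The only point needing care is the range of summation. The written upper limits $5$ and $2q+7+\xi$ are the numbers of plane/point orbits and line orbits, respectively; I read them as ranging over the full list of orbits of the relevant kind. For $q\ge5$ these counts come from Theorems \ref{th2:Hirs} and \ref{th2:line orbits q>=5}, and for the groups $G_q$ and $G_q^*$ with $q=2,3,4$ they come from the corresponding classification. I would state explicitly that the sums are taken over all orbits, since the counting argument depends only on the orbits partitioning the objects. The starred versions then follow verbatim, because Lemma \ref{lem3:top-bottom entries} applies equally to $G_q^*$-orbits, which likewise partition points, planes and lines.
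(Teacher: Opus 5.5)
Your proposal is correct and follows the paper's proof. As in the paper, the first three relations come directly from the double-counting identities of Lemma~\ref{lem3:top-bottom entries}, and the row and column sums come from the incidence counts in $\PG(3,q)$: a plane has $q^2+q+1$ points, a line has $q+1$ points and lies in $q+1$ planes, and a point lies on $q^2+q+1$ planes and $q^2+q+1$ lines, while a plane contains $q^2+q+1$ lines. Your remark that the summation limits must be read as running over all orbits of the relevant kind is a useful clarification that the paper leaves implicit, since for $G_q$ with $q=2,3,4$ the orbit counts differ from $5$ and $2q+7+\xi$.
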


\begin{proof}
    We use Lemma \ref{lem3:top-bottom entries}. The assertions \eqref{eq3:=0}--\eqref{eq3:obtain bij} follow from \eqref{eq3:plane-point}--\eqref{eq3:line-plane}. The relation \eqref{eq3:sum j} holds as, in $\PG(3,q)$, there are $q^2+q+1$ points in every plane, $q+1$ points on every line, and, finally, $q+1$ planes through every line. The assertion \eqref{eq3:sum i} holds since in $\PG(3,q)$ there are $q^2+q+1$ planes and lines through every point and $q^2+q+1$ lines in every plane.
\end{proof}

\begin{notation}\label{notat3:2}
As an addition to Notations \ref{notation2:1}, \ref{notation2:2}, \ref{notat3:top-bot entriis}, and notations from Theorems \ref{th2:Hirs}, \ref{th2:line orbits q>=5}, and Lemma \ref{lem3:top-bottom entries}, in tables of Sections \ref{sec4:q=2}--\ref{sec6:q=4} we use the following notations.

\textbf{(i)} In the \emph{left part of the tables} (columns $1,\ldots,6$) the orbits under $G_q^*$ are used.

In the 1-st column, the entry of the form
$\begin{array}
{c}
\Ns_i^*:\#\Ns_i^*\\
\pi
\end{array}
$,
notes that $\#\Ns_i^*$ is the number of planes in the orbit $\Ns_i^*$ and $\pi\in\Pk$ is the type of the planes in $\Ns_i^*$.
Similarly, the entry of the form
$\begin{array}
{c}
\Lc_i^*:\#\Lc_i^*\\
\lambda
\end{array}
$,
notes that $\#\Lc_i^*$ is the number of lines in the orbit $\Lc_i^*$ and $\lambda\in\Lk^{(\xi)}$ is the type of the lines in $\Lc_i^*$.

In the top of the columns $2,\ldots,6$, the entry of the form
$\begin{array}
{c}
\Ms_j^*\\
\pk\\
\#\Ms_j^*
\end{array}
$,
notes that $\pk\in\Mk^{(\xi)}$ is the type of the points in the orbit $\Ms_j^*$ and $\#\Ms_j^*$ is the number of points in $\Ms_j^*$.
Similarly, the entry of the form
$\begin{array}
{c}
\Ns_j^*\\
\pi\\
\#\Ns_j^*
\end{array}
$,
notes that $\pi\in\Pk$ is the type of the planes in the orbit $\Ns_j^*$ and $\#\Ns_j^*$ is the number of planes in $\Ns_j^*$.

\textbf{(ii)} In the \emph{right part of the tables} (columns $7,\ldots,11$ or $7,\ldots,12$) the orbits under $G_q$ are used.
At most, the notations are similar to the left part. Many orbits from the right part are unions of orbits from the left one. In this case, the united orbit contains a few types of planes, or points, or lines. In the tables, these types are separated by commas.
For $q=4$, in one of the three tables the right part is absent.
\end{notation}


\section{Orbits and incidence matrices for $q=2$}\label{sec4:q=2}
For $q=2$, the twisted cubic is $\Cs=\{ \Pb(0,0,0,1), \Pb(1,1,1,1), \Pb(1,0,0,0)\}$.
Indeed, in this case $\Cs$ is a conic contained in the plane $\pi_{12}$ of equation $x_1 = x_2$; see  \cite[Chapter 21.1]{Hirs_PG3q}. 
Theorem \ref{th2:Gq q=234} states that $G_2\cong\mathbf{S}_3\mathbf{Z}_2^3$.
 We recall also that in  $\PG(3,2)$ a projective frame has four points. A way to obtain a projective frame is taking the three points of $\Cs$ and one of the eight points not belonging to $\pi_{12}$. This explains the structure of  $G_2\cong\mathbf{S}_3\mathbf{Z}_2^3$. The $\mathbf{S}_3$ part corresponds to the possibility of permuting the three points of $\Cs$, the $\mathbf{Z}_2^3$ part corresponds to the possibility of swapping two points not belonging to $\pi_{12}$ fixing punctually $\Cs$. A matrix corresponding to a projectivity of $G_2$ is the product of a matrix $\Mb$ of the form \eqref{eq2:q ge5 matrixGq} and a matrix  $\Mb'$  of the form
  \begin{equation}\label{mat mprime q2}
 \Mb'=\left[
 \begin{array}{cccc}
 1&0&0&0\\
 e&f&g&h\\
 e&g&f&h\\
 0&0&0&1
 \end{array}
  \right],~e,f,g,h\in\F_2,~ f\ne g.
  \end{equation}

The projectivity corresponding to a matrix $\Mb'$ fixes the twisted cubic punctually, and therefore fix $\pi_{12}$ punctually, and permutes the points outside $\pi_{12}$.


%

 \begin{theorem}\label{th4:Gq*q=2}
Let $q=2$, $q\equiv-1\pmod 3$.

\textbf{\emph{(i)}}
  The group $G_2\cong\mathbf{S}_3\mathbf{Z}_2^3$ contains $8$ subgroups isomorphic to $PGL(2,2)$ divided into two conjugacy classes. For one of these subgroups (and only for it), the matrices corresponding to its projectivities
assume the form described by \eqref{eq2:q ge5 matrixGq}. So, this subgroup (and only it) can be treated as a critical subgroup $G_2^*$.

\textbf{\emph{(ii)}} The plane, point, and line orbits $\Ns^*_i,\Ms^*_i$, and $\Lc^*_i$  under the critical subgroup $G_2^*$ are the same as the orbits $\Ns_i,\Ms_i$, and $\Lc_i$ in Theorems \emph{\ref{th2:Hirs}(ii)(iii)}, \emph{\ref{th2:line orbits q>=5}(i)}. We have,
\begin{align*}
&\Ns^*_1=\{\Gamma\T{-planes}\},~\#\Ns^*_1=3;~\Ns^*_{2}=\{2_\Cs\T{-planes}\},~\#\Ns^*_2=6;~\Ns^*_{3}=\{3_\Cs\T{-planes}\},\db\\
&\#\Ns^*_3=1;~ \Ns^*_{4}=\{\overline{1_\Cs}\T{-planes}\},~ \#\Ns^*_4=3;~\Ns^*_{5}=\{0_\Cs\T{-planes}\},~ \#\Ns^*_5=2.\db\\
&\Ms^*_1=\{\Cs\T{-points}\},~ \Ms^*_2=\{\Tr\T{-points}\},~ \Ms^*_3=\{3_\Gamma\T{-points}\},~ \Ms^*_4=\{1_\Gamma\T{-points}\},~\db\\
&\Ms^*_5=\{0_\Gamma\T{-points}\};~ \#\Ms^*_j=\#\Ns^*_j,~ j=1,~\ldots,~5.\\
&\Lc^*_1=\{\RC\T{-lines}\},~\Lc^*_2=\{\RA\T{-lines}\},~
\Lc^*_3=\{\Tr\T{-lines}\},~\#\Lc^*_1=\#\Lc^*_2=\#\Lc^*_3=3;\db \\
&\Lc^*_4=\{\IC\T{-lines}\},~ \Lc^*_5=\{\IA\T{-lines}\},~ \#\Lc^*_4=\#\Lc^*_5=1;\db\\
&\Lc^*_6=\{\UG_1\T{-lines}\},~ \Lc^*_7=\{\UG_2\T{-lines}\},~ \#\Lc^*_6=\#\Lc^*_7=3;~\Lc^*_8=\{\UnG\T{-lines}\},\db\\
&\Lc^*_9=\{\EG\T{-lines}\},~\Lc^*_{10}=\{\EnG\T{-lines}\},~ \#\Lc^*_8=\#\Lc^*_9=\#\Lc^*_{10}=6.
\end{align*}
The other three subgroups of the conjugacy class of $G_2^*$ have 5 point orbits, too.
Three orbits are $\Ms^*_1$, $\Ms^*_3$, $\Ms^*_4$. The 8 points outside the plane $x_1 = x_2$ are arranged into two sets of size two and six that are different from $\Ms^*_2$, $\Ms^*_5$.
The four subgroups of the other conjugacy class have 6 point orbits. In the plane $x_1 = x_2$ the orbits are the same as the other class, outside the plane there are two orbits of size one and two orbits of size three.
\end{theorem}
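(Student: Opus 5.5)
We exploit the explicit description of $G_2$ given before the statement. The cubic is $\Cs=\{\Pb(0,0,0,1),\Pb(1,1,1,1),\Pb(1,0,0,0)\}$ and lies in $\pi_{12}:x_1=x_2$. Every element of $G_2$ is a product $\Mb\Mb'$ with $\Mb$ of the form \eqref{eq2:q ge5 matrixGq} and $\Mb'$ of the form \eqref{mat mprime q2}. The kernel $K$ of the restriction $G_2\to \mathrm{Sym}(\Cs)\cong\mathbf{S}_3$ is the group of the matrices $\Mb'$, so $K\cong\mathbf{Z}_2^3$. $K$ acts regularly on the $8$ points off $\pi_{12}$: the points of $\Cs$ and one point off $\pi_{12}$ form a frame of $\PG(3,2)$.

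\emph{Subgroups isomorphic to $PGL(2,2)\cong\mathbf{S}_3$.} A subgroup $H\cong\mathbf{S}_3$ meets $K$ trivially, since $K$ is abelian of exponent $2$ and $H$ has no nontrivial normal $2$-subgroup. Hence $H$ is a complement to $K$ in $G_2=K\rtimes\mathbf{S}_3$. Complements correspond to $1$-cocycles $\mathbf{S}_3\to K$, so there are $|Z^1(\mathbf{S}_3,K)|$ of them, and their conjugacy classes are indexed by $H^1(\mathbf{S}_3,K)$.

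Identify $K$ with the regular action on the $8$ outer points, i.e.\ with $\F_2^3$ as affine translations. The $\mathbf{S}_3$-module $K$ is then the permutation module $\F_2[\Cs]$, or the $2$-dimensional irreducible plus trivial. By Shapiro's lemma, $H^1(\mathbf{S}_3,\F_2[\mathbf{S}_3/\mathbf{S}_2])=H^1(\mathbf{S}_2,\F_2)=\F_2$. This gives two conjugacy classes. Since $|B^1|=|K|/|K^{\mathbf{S}_3}|=8/2=4$, there are $|Z^1|=8$ complements, split as $4+4$.

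Concretely, I would verify this with the matrices. Take $G_2^*=\{\Mb\}$, given by \eqref{eq2:q ge5 matrixGq} with $a,b,c,d\in\F_2$. Its conjugates by $K$ form a class of size $[K:C_K(G_2^*)]=4$. The other class can be exhibited by twisting one involution by the central element of $K$. The central element is the $\Mb'$ fixing the points $\Pb(0,1,0,0)$ and $\Pb(0,0,1,0)$, with $f=0,g=1$ and $e=h=0$. Uniqueness of the critical subgroup is checked directly. Among the $8$ subgroups, only $G_2^*$ consists entirely of matrices of shape \eqref{eq2:q ge5 matrixGq}. In every other subgroup, some element has $\Mb'\neq I$, which breaks the pattern, e.g.\ a nonzero entry of type $e$ or $h$ in rows $2,3$ where the form forces the entries $3a^2b$ and $3c^2d$.

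\emph{Orbits under $G_2^*$.} Part (ii) is immediate from Lemma~\ref{lem3:coincid}, Theorem~\ref{th2:Hirs}(ii)(iii) and Theorem~\ref{th2:line orbits q>=5}(i) with $\xi=-1$, $q=2$. The sizes follow by substitution, e.g.\ $(q^3-q)/6=1$ and $(q^2-q)(q^2-1)=6$. Note also $2q-4=0$ and $2+\xi=1$, so there is a single $\EnG$ orbit and $2q+7+\xi=10$ orbits in total.

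\emph{Point orbits of the other subgroups.} Every complement $H$ acts on $\pi_{12}$ the same way, since $K$ fixes $\pi_{12}$ pointwise. So the orbits $\Ms^*_1,\Ms^*_3,\Ms^*_4$, which lie in $\pi_{12}$, are common to all $8$ subgroups. On the $8$ outer points, which form an affine space $\mathrm{AG}(3,2)$ on which $K$ acts by translations, a complement $H$ acts by affine maps $x\mapsto Ax+c(\sigma)$. Its point stabilizers are detected by whether the cocycle is a coboundary on subgroups. For $G_2^*$ the orbits are $\Ms^*_2$ and $\Ms^*_5$, of sizes $6$ and $2$. A $K$-conjugate of $G_2^*$ is the stabilizer of a translated origin, so it gives orbits of sizes $2$ and $6$ located elsewhere. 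For the other class, the element of order $3$ still has two fixed points, because $H^1$ of $\mathbf{Z}_3$ vanishes. Each involution, however, becomes fixed-point-free on a different pattern, so the counts yield orbits of sizes $1,1,3,3$. I would confirm this by Burnside's lemma on one explicit representative.

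The main obstacle is the explicit identification of the second class and its orbit computation. The cohomological count is clean, but matching it to concrete matrices and orbit sizes needs a careful small calculation in $\PG(3,2)$.
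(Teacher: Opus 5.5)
Your treatment of part (i) is correct, and it takes a genuinely different route from the paper, whose proof is only ``by direct computation''. You show that every $\mathbf{S}_3\le G_2$ meets $K$ trivially and so is a complement. You identify $K$ with $\F_2[\Cs]$ as a module, which holds because the translation vectors lie in $\langle\Cs\rangle$ and the three vectors of $\Cs$ are permuted. Shapiro's lemma together with $|B^1|=4$ then gives $8$ complements in two classes of size $[K:C_K(H)]=4$. This explains $8=4+4$ conceptually, where the paper relies on enumeration.

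Two small repairs in that part:
\begin{itemize}
\item Uniqueness of $G_2^*$ is immediate. Over $\F_2$ the form \eqref{eq2:q ge5 matrixGq} produces only six matrices, and these are exactly $G_2^*$. Your remark about entries ``of type $e$ or $h$'' is not accurate as stated, since products $\Mb\Mb'$ mix entries.
\item The central element $z\in K$ is the swap $x_1\leftrightarrow x_2$, i.e.\ translation by $w_0=(0,1,1,0)$ on the outer points. It interchanges $\Pb(0,1,0,0)$ and $\Pb(0,0,1,0)$ rather than fixing them. By regularity, no nontrivial element of $K$ fixes a point off $\pi_{12}$.
\end{itemize}

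The genuine problem is in the last part. You have the two conjugacy classes reversed, so your stated reasons contradict the orbit sizes you claim.

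$G_2^*$ is not the stabilizer of any ``origin''. Its orbits off $\pi_{12}$ have sizes $6$ and $2$, so it fixes no outer point, and its involutions act fixed-point-freely there. If the involutions of the other class were also fixed-point-free, Burnside would give $(8+2\cdot 2+3\cdot 0)/6=2$ outer orbits, not four.

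The correct picture is as follows.
\begin{itemize}
\item The stabilizers $G_u$ of outer points $u$ are complements. Since $z$ is central, $G_u=G_{u+w_0}$, so these stabilizers form the \emph{other} class of four.
\item Write the outer points as $u+w$ with $w\in\langle\Cs\rangle$. Then $G_u$ acts on $w$ as the permutation module. Its orbits, by weight with respect to the basis given by $\Cs$, have sizes $1,1,3,3$.
\item Each involution of $G_u$ fixes $4$ outer points, and Burnside gives $(8+4+12)/6=4$ orbits.
\item Concretely, your twisted group $\{z^{\mathrm{sgn}(\sigma)}\sigma:\sigma\in G_2^*\}$ fixes both $0_\Gamma$-points $\Pb(1,1,0,1)$ and $\Pb(1,0,1,1)$. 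This is because the involutions of $G_2^*$ and $z$ each swap these two points.
\end{itemize}

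For the class of $G_2^*$, the right reason is that $kG_2^*k^{-1}$ has outer orbits $k\Ms^*_5$ and $k\Ms^*_2$. Since $\Ms^*_5=\{\Pb(1,1,0,1),\Pb(1,0,1,1)\}$ is a coset of $\langle w_0\rangle$, these orbits differ from $\Ms^*_5$ and $\Ms^*_2$ exactly for the three $k\notin\{1,z\}$.
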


\begin{proof}
By direct computation.
\end{proof}

\begin{theorem}\label{th4:PlanePointOrbitG2}
Let $q=2$. Under $G_2\cong\mathbf{S}_3\mathbf{Z}_2^3$, there are $4$ plane orbits $\Ns_i$ and $4$ the point orbits $\Ms_j$ such that
\begin{align}
&\Ns_1=\Ns^*_1\cup\Ns^*_4,~\Ns_{2}=\Ns^*_{2},~\Ns_{3}=\Ns^*_{3},~\Ns_{4}=\Ns^*_{5}.\label{eq4:PlaneOrbitG2-1}\db\\
&\Ms_1=\Ms^*_1,~ \Ms_2=\Ms^*_2\cup\Ms^*_5,~\Ms_3=\Ms^*_3~ \Ms_4=\Ms^*_4.\label{eq4:PointOrbitG2-1}
\end{align}
\end{theorem}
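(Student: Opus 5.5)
Since $G_2^*\le G_2$, every $G_2$-orbit is a union of $G_2^*$-orbits, and Theorem~\ref{th4:Gq*q=2}(ii) lists those orbits. The plan is therefore in two steps. First, exhibit elements of $G_2$ that merge the claimed unions. Second, show that no further merging happens. By Theorem~\ref{th2:plane=point}, the number of plane orbits equals the number of point orbits, so it suffices to settle the points carefully and then the planes.

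\emph{Points.} The plane $\pi_{12}:x_1=x_2$ contains $7$ points. These are the $3$ points of $\Cs$ ($\Ms^*_1$), the $3$ points of $\Ms^*_3$ and the single point of $\Ms^*_4$. Since $\#\Ms^*_2+\#\Ms^*_5=3+2=5$ does not match, I first check directly which points these are, e.g.\ $\Pb(0,1,1,0)$ and the points $\Pb(1,1,1,0)$, $\Pb(0,1,1,1)$, $\Pb(1,0,0,1)$. The remaining $8$ points off $\pi_{12}$ form $\Ms^*_2\cup\Ms^*_5$ together with the points of $\Ms^*_3,\Ms^*_4$ lying off $\pi_{12}$; the exact split will be read off by listing coordinates.

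Every projectivity of $G_2$ fixes $\Cs$ setwise, hence fixes $\pi_{12}$, which is the plane spanned by the conic $\Cs$. So $G_2$ preserves the partition of points into those on $\Cs$, those on $\pi_{12}\setminus\Cs$, and those off $\pi_{12}$. The $\mathbf{Z}_2^3$ part, given by the matrices $\Mb'$ in \eqref{mat mprime q2}, fixes $\pi_{12}$ pointwise and, as stated in the text preceding the theorem, acts transitively on the $8$ points off $\pi_{12}$. Hence $\Ms^*_2\cup\Ms^*_5$ is one $G_2$-orbit. Inside $\pi_{12}$, the group $G_2$ induces only the group of the conic, namely $\mathbf{S}_3$ acting as $PGL(2,2)$ on the plane. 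This action fixes the nucleus and the tangent lines, so no element of $G_2$ can merge $\Ms^*_3$ with $\Ms^*_4$, nor either of them with $\Ms^*_1$. The invariants to use are: lying on $\Cs$, lying in $\pi_{12}$, and being the nucleus of the conic versus a point on a tangent. This gives exactly the four point orbits of \eqref{eq4:PointOrbitG2-1}.

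\emph{Planes.} By Theorem~\ref{th2:plane=point} there are exactly $4$ plane orbits, so it suffices to show that $\Ns^*_1$ and $\Ns^*_4$ fuse and that each of $\Ns^*_2,\Ns^*_3,\Ns^*_5$ is $G_2$-invariant. Both $\Ns^*_1$ and $\Ns^*_4$ consist of planes meeting $\Cs$ in exactly one point. I will apply a suitable $\Mb'$, for instance with $f=0$, $g=1$, $e=1$, $h=0$, to an osculating plane such as $\pi_\T{osc}(\infty)=\boldsymbol{\pi}(0,0,0,1)$. Acting by the inverse transpose should send it to a plane through $P(\infty)$ that is not osculating, which establishes the fusion. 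The number of points of $\Cs$ on a plane is a $G_2$-invariant, which separates $\Ns^*_2$ (two points), $\Ns^*_3$ (three points, the plane $\pi_{12}$ itself) and $\Ns^*_5$ (no points) from each other and from $\Ns_1$. Since there must be exactly $4$ orbits, each of these is a single $G_2$-orbit.

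The main obstacle is keeping the point bookkeeping consistent: confirming which starred orbits lie in $\pi_{12}$ and checking that the $\Mb'$-action off $\pi_{12}$ really connects all of $\Ms^*_2\cup\Ms^*_5$ and nothing else. I would settle this by an explicit finite computation, listing all $15$ points with their starred labels and applying the generators $\Mb'$ together with the $\mathbf{S}_3$ generators of the form \eqref{eq2:q ge5 matrixGq}. If the computation shows that some point of $\Ms^*_3$ or $\Ms^*_4$ lies off $\pi_{12}$, the orbit separation would instead be argued from the fact that $G_2^*$-orbits contained in $\pi_{12}$ cannot meet the complement.
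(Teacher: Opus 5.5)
Your strategy is the paper's. $G_2$ fixes $\pi_{12}$, and the matrices $\Mb'$ of \eqref{mat mprime q2} fix $\pi_{12}$ pointwise and fuse the points off it. One explicit $\Mb'$ fuses a $\Gamma$-plane with an $\overline{1_\Cs}$-plane, and Theorem~\ref{th2:plane=point} gives the count. Separating the orbits inside $\pi_{12}$ by invariants is an acceptable substitute for the paper's remark that $G_2$ is generated by $G_2^*$ and the $\Mb'$, which act trivially on $\pi_{12}$. However, the correct invariant is that the nucleus is the only point of $\pi_{12}\setminus\Cs$ on no real chord. The conic tangents (the $\UG$-lines in $\pi_{12}$) are permuted, not fixed, and every point of $\pi_{12}\setminus\Cs$ lies on one of them. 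Your transitivity claim is right: on points with $x_1+x_2=1$, $\Mb'$ acts by $x_0\mapsto x_0+e$, $x_3\mapsto x_3+h$, swapping $x_1,x_2$ iff $f=0$, so it is regular there. Two steps, however, fail as written.

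\textbf{Orbit sizes.} You misread them. By Theorem~\ref{th4:Gq*q=2}, $\#\Ms^*_j=\#\Ns^*_j$, so $\#\Ms^*_2=6$, $\#\Ms^*_3=1$, $\#\Ms^*_4=3$, $\#\Ms^*_5=2$, and there is no mismatch. What your argument actually needs, and leaves unproved, is $\pi_{12}\setminus\Cs=\Ms^*_3\cup\Ms^*_4$. The point $\Pb(0,1,1,0)$ lies on $x_0=0$, $x_3=0$ and $x_0+x_1+x_2+x_3=0$, so it is the $3_\Gamma$-point. The points $\Pb(1,0,0,1)$, $\Pb(0,1,1,1)$, $\Pb(1,1,1,0)$ are the third points of the real chords, and each lies on exactly one osculating plane. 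Taking complements, $\Ms^*_2\cup\Ms^*_5$ is exactly the set of $8$ points off $\pi_{12}$. Your closing contingency is not a real alternative: if $\Ms^*_3$ or $\Ms^*_4$ met the complement of $\pi_{12}$, the transitive group of the $\Mb'$ would merge it with the other points there, and the statement would be false.

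\textbf{The plane fusion.} Your matrix $e=1,f=0,g=1,h=0$ does not move $\pi_\T{osc}(\infty)$. Under $\Mb'$ the last coordinate becomes $x_3+h(x_1+x_2)$, so for $h=0$ the plane $x_3=0$ is invariant. You need $h=1$. The paper's choice $e=f=0$, $g=h=1$ sends $x_3=0$ to $x_1+x_2+x_3=0$, which meets $\Cs$ only in $P(\infty)$ and is not osculating. In fact no explicit matrix is needed. With $4$ point orbits, Theorem~\ref{th2:plane=point} gives $4$ plane orbits. The number of points of $\Cs$ on a plane already separates $\Ns^*_2$, $\Ns^*_3$, $\Ns^*_5$ and $\Ns^*_1\cup\Ns^*_4$, so the single forced fusion must be $\Ns^*_1$ with $\Ns^*_4$.
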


\begin{proof}
The orbit $\Ms^*_4$ consists of the  three points on $\RC\T{-lines}$ not belonging $\Cs$. The orbit $\Ms^*_3$ consists of the meeting point of the three $\UG\T{-lines}$ contained in $\pi_{12}$. As the matrices of the form \eqref{eq2:q ge5 matrixGq} fix the points of $\pi_{12}$, $\Ms^*_3 = \Ms_3$, $\Ms^*_4 = \Ms_4$.
Tangent lines lie outside $\pi_{12}$, otherwise two tangent lines would meet. The  $0_\Gamma\T{-points}$ do not belong to $\pi_{12}$, too. Therefore there exist a matrix of the form \eqref{mat mprime q2} mapping a point of $\Ms^*_2$ into a point of $\Ms^*_5$.
The matrix $\Mb'_1$ of the form \eqref{mat mprime q2} with $e = f = 0$, $g = h = 1$, maps the $\Gamma\T{-plane}$ $x_3=0$ into the $\overline{1_\Cs}\T{-plane}$ $x_1 + x_2 = -x_3$.
By Theorem \ref{th2:plane=point},  $G_2$ has four orbits of planes, too.
\end{proof}

\begin{corollary}
Let $q=2$. The plane and point orbits under $G_2\cong\mathbf{S}_3\mathbf{Z}_2^3$ are as follows.
\begin{align}
&\Ns_1=\{\Gamma\T{-planes},\overline{1_\Cs}\T{-planes}\},~\#\Ns_1=3+3=6;~
\Ns_{2}=\{2_\Cs\T{-planes}\},~\#\Ns_2=6;\label{eq4:PlaneOrbitG2-2}\db\\
&~\Ns_{3}=\{3_\Cs\T{-planes}\},~\#\Ns_3=1;~\Ns_{4}=\{0_\Cs\T{-planes}\},~ \#\Ns_4=2.\notag\db\\
&\Ms_1=\{\Cs\T{-points}\};~ \Ms_2=\{\Tr\T{-points},0_\Gamma\T{-points}\},~\#\Ms_2=6+2=8;\label{eq4:PointOrbitG2-2}\db\\
&\Ms_3=\{3_\Gamma\T{-points}\},~\#\Ms_3=1;~ \Ms_4=\{1_\Gamma\T{-points}\},~\#\Ms_1=\#\Ms_4=3.\notag
\end{align}
\end{corollary}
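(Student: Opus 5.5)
The corollary follows directly from Theorem \ref{th4:PlanePointOrbitG2} once the unions there are unpacked using the explicit description of the critical orbits in Theorem \ref{th4:Gq*q=2}(ii). My plan is to substitute the types and sizes of $\Ns^*_i$ and $\Ms^*_j$ into \eqref{eq4:PlaneOrbitG2-1} and \eqref{eq4:PointOrbitG2-1}. I will then read off the types contained in each $G_2$-orbit and add the cardinalities.

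For planes:
\begin{itemize}
\item $\Ns_1=\Ns^*_1\cup\Ns^*_4$ consists of the $\Gamma$-planes and the $\overline{1_\Cs}$-planes. These sets are disjoint, since a $\overline{1_\Cs}$-plane is by definition not in $\Gamma$. Hence $\#\Ns_1=3+3=6$.
\item $\Ns_2=\Ns^*_2$ is the set of $2_\Cs$-planes, of size $6$.
\item $\Ns_3=\Ns^*_3$ is the single $3_\Cs$-plane, namely $\pi_{12}$, the plane containing the conic $\Cs$.
\item $\Ns_4=\Ns^*_5$ is the set of $0_\Cs$-planes, of size $2$.
\end{itemize}

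For points:
\begin{itemize}
\item $\Ms_1=\Ms^*_1$ is $\Cs$, with $3$ points.
\item $\Ms_2=\Ms^*_2\cup\Ms^*_5$ consists of the $\Tr$-points and the $0_\Gamma$-points, of sizes $6$ and $2$. They are disjoint because the critical orbits partition the points. Hence $\#\Ms_2=8$.
\item $\Ms_3$ is the single $3_\Gamma$-point.
\item $\Ms_4$ is the set of three $1_\Gamma$-points.
\end{itemize}

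As consistency checks, the plane sizes give $6+6+1+2=15=\theta_{3,2}$, and the point sizes give $3+8+1+3=15=\theta_{3,2}$. Also, $\Ms_2$ is exactly the set of $8$ points off $\pi_{12}$. This matches the proof of Theorem \ref{th4:PlanePointOrbitG2}, where the $\Mb'$-type matrices \eqref{mat mprime q2} fix $\pi_{12}$ pointwise and permute the complement.

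There is no real obstacle. The only points needing care are checking that the listed types within each union are disjoint, so that the cardinalities add, and noting that the numbering of the $G_2$-orbits follows \eqref{eq4:PlaneOrbitG2-1}--\eqref{eq4:PointOrbitG2-1}. (The statement's ``$\#\Ms_1=\#\Ms_4=3$'' is consistent with $\#\Ms^*_1=\#\Ms^*_4=3$.)
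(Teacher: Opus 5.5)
Your proposal is correct and follows the paper's route: the paper's proof also just substitutes the types and sizes of the critical orbits from Theorem~\ref{th4:Gq*q=2}(ii) into the unions given in Theorem~\ref{th4:PlanePointOrbitG2}. Your extra remarks are accurate supplementary checks: the merged types are disjoint, the sizes total $15=\theta_{3,2}$, and $\Ms_2$ is exactly the set of $8$ points off $\pi_{12}$.
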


\begin{proof}
The assertions follow from Theorems \ref{th4:Gq*q=2}, \ref{th4:PlanePointOrbitG2}.
\end{proof}


\begin{theorem}\label{th4:LineOrbitG2}
Let $q=2$. Under $G_2\cong\mathbf{S}_3\mathbf{Z}_2^3$, there are $6$ line orbits $\Lc_i$ such that
\begin{align*}
&\Lc_1=\Lc^*_1, ~\Lc_2=\Lc^*_5, ~\Lc_3=\Lc^*_7, ~\Lc_4=\Lc^*_2  \cup \Lc^*_4, \db\\
&\Lc_5=\Lc^*_3 \cup \Lc^*_6 \cup \Lc^*_8, ~\Lc_6=\Lc^*_9 \cup \Lc^*_{10}.
\end{align*}

\end{theorem}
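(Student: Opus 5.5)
Since $G_2^*\le G_2$, every $G_2$-orbit of lines is a union of $G_2^*$-orbits $\Lc^*_1,\dots,\Lc^*_{10}$ from Theorem \ref{th4:Gq*q=2}(ii). I will show that the claimed unions are invariant and that no further merging occurs. Because $G_2$ is generated by $G_2^*$ together with the matrices $\Mb'$ of \eqref{mat mprime q2}, it suffices to find the images under the $\Mb'$-projectivities of one representative line from each $\Lc^*_i$. As a check, the sizes are $3,1,3,3+1=4,3+3+6=12,6+6=12$, which sum to $35=\beta_{3,2}$.

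\textbf{Step 1: invariants that separate the six classes.} I will use facts preserved by all of $G_2$, since every element fixes $\Cs$ and the plane $\pi_{12}$ (the plane of the conic $\Cs$). The first is the number of points a line shares with $\Cs$. The $\RC$-lines are the only lines with $2$ points of $\Cs$, so $\Lc^*_1$ is a full orbit. The lines with exactly one point of $\Cs$ are $\Tr$, $\UG_1$, $\UG_2$ and $\UnG$. The other lines are $\RA$, $\IC$, $\IA$, $\EG$ and $\EnG$. The second invariant is whether a line lies in $\pi_{12}$, and if not, which point of $\pi_{12}$ it meets, using the $G_2$-orbits $\Ms_1,\Ms_3,\Ms_4$ of Theorem \ref{th4:PlanePointOrbitG2}.

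In $\pi_{12}\cong\PG(2,2)$ the seven lines are the three $\RC$-lines, the three lines through the nucleus $\Ms_3$ (one through each point of $\Cs$), and the external line of the conic, which consists of the points of $\Ms_4$. I will identify these by coordinates. I expect the three unisecants of $\pi_{12}$ through the nucleus to be $\Lc^*_7$, which gives the singleton-free orbit $\Lc_3$. I expect the external line to be the unique line of $\pi_{12}$ missing $\Cs$, namely $\Lc^*_5$ (IA) or $\Lc^*_4$ (IC). The statement says it is the IA-line, and I will verify this directly. Lines in $\pi_{12}$ are permuted only among themselves, and the three types of line in $\pi_{12}$ are distinguished by $\#(\ell\cap\Cs)$ and by incidence with $\Ms_3$. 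So $\Lc_2=\Lc^*_5$ and $\Lc_3=\Lc^*_7$ are $G_2$-orbits.

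The lines outside $\pi_{12}$ that meet $\Cs$ are $\Tr$, $\UG_1$ and $\UnG$. The remaining ones are $\RA$, $\IC$, $\EG$ and $\EnG$, split by the point where they meet $\pi_{12}$. That point lies in $\Ms_3$ or in $\Ms_4$, since such a line misses $\Cs$. I expect $\RA$ and $\IC$ (total $4$) to pass through the nucleus. There are $4$ lines through the nucleus not in $\pi_{12}$, which matches. I expect $\EG$ and $\EnG$ (total $12$) to pass through points of $\Ms_4$, with $3\cdot4=12$ such lines. Similarly, the unisecants outside $\pi_{12}$ number $3\cdot4=12=3+3+6$. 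This confirms that no further merging is possible.

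\textbf{Step 2: transitivity within each union.} The subgroup of $\Mb'$-matrices fixes $\pi_{12}$ pointwise and acts on the $8$ affine points as the translation group $\mathbf{Z}_2^3$ of $\mathrm{AG}(3,2)$. Take a point $Q$ of $\pi_{12}$. The $4$ lines through $Q$ outside $\pi_{12}$ are parallel classes in the affine space, and translations act transitively on them. So all $4$ lines through the nucleus form one orbit, giving $\Lc_4=\Lc^*_2\cup\Lc^*_4$. For a point $P(t)$ of $\Cs$, translations act transitively on its $4$ lines outside $\pi_{12}$; together with the transitivity of $G_2^*$ on $\Cs$ this gives $\Lc_5$. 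The same argument for $\Ms_4$ gives $\Lc_6$. To make this concrete I will exhibit explicit $\Mb'$ (e.g.\ $\Mb'_1$ from the proof of Theorem \ref{th4:PlanePointOrbitG2}) mapping a $\Tr$-line to a $\UG_1$-line and to a $\UnG$-line, an $\RA$-line to an $\IC$-line, and an $\EG$-line to an $\EnG$-line.

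\textbf{Main obstacle.} The main difficulty is the bookkeeping of Step 1: correctly matching the abstract types $\RA$, $\IC$, $\IA$, $\UG_1$, $\UG_2$ for $q=2$ to their positions relative to $\pi_{12}$. I will settle this by listing explicit representatives, and by checking that the translations act on affine points and so fix the directions through each point of $\pi_{12}$.
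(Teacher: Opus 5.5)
Your proof is correct, and it takes a genuinely different route from the paper's.

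The paper establishes the mergers by exhibiting images under the single matrix $\Mb'_1$: an $\RA$-line onto an $\IC$-line, a $\Tr$-line onto a $\UG_1$-line and onto a $\UnG$-line, and an $\EG$-line onto an $\EnG$-line. It notes that the lines of $\pi_{12}$ are fixed by all $\Mb'$, and it settles that nothing else merges only ``by direct computation''.

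You argue differently on both counts:
\begin{itemize}
\item You prove non-merging with $G_2$-invariants: whether $\ell$ lies in the conic plane $\pi_{12}$, the value of $\#(\ell\cap\Cs)$, and, for $\ell\not\subset\pi_{12}$, whether the trace of $\ell$ on $\pi_{12}$ lies in $\Ms_1$, $\Ms_3$ or $\Ms_4$.
\item You prove transitivity structurally. Acting on row vectors, $\mathbf{x}\Mb'=\mathbf{x}+(x_1+x_2)(e,g,g,h)$, so the eight matrices $\Mb'$ are exactly the elations with axis $\pi_{12}$. These are the translations of $\mathrm{AG}(3,2)=\PG(3,2)\setminus\pi_{12}$, and they act transitively on the four lines off $\pi_{12}$ through any fixed point of $\pi_{12}$. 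Combined with the transitivity of $G_2^*$ on $\Ms_1$ and on $\Ms_4$, this gives the six orbits of sizes $3,1,3,4,12,12$.
\end{itemize}
Your route explains why exactly these unions occur, and it replaces the paper's unexplained computational step with an argument. The paper's route supplies concrete witness lines and a concrete matrix, which is convenient for checking the tables.

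The one part you must actually carry out, rather than ``expect'', is the type bookkeeping. Agreement of the counts $4=3+1$ and $12=6+6$ is only a consistency check, not an identification. The check is short:
\begin{itemize}
\item The nucleus $\Pb(0,1,1,0)$ is the $3_\Gamma$-point, so it lies on every real axis.
\item The imaginary chord through $P(\omega),P(\omega^2)$, with $\omega\in\F_4\setminus\F_2$, contains $P(\omega)+P(\omega^2)=\Pb(0,1,1,0)$.
\item None of these four lines lies in $\pi_{12}$.
\end{itemize}
Hence $\Lc^*_2\cup\Lc^*_4$ is exactly the set of lines through the nucleus off $\pi_{12}$. By elimination, the $\EG$- and $\EnG$-lines must meet $\pi_{12}$ in $\Ms_4$. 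The unisecants of $\pi_{12}$ through the nucleus form $\Lc^*_7$, since they contain the $\Ms^*_3$-point, as the known entries for $\UG_2$ require and unlike the $\UG_1$-lines.

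Incidentally, the sentence in the paper's proof that deduces ``$\Lc_2=\Lc^*_5$'' from these three lines should read $\Lc_3=\Lc^*_7$.
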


\begin{proof}
There are three  $\UG$-lines contained in the plane  $\pi_{12}$. They are permuted by the matrices of type of the form \eqref{eq2:q ge5 matrixGq} and are fixed punctually by the matrices of the form \eqref{mat mprime q2}, so $\Lc_2=\Lc^*_5$. We call this orbit $\UG_2$. Also $\RC$-lines and the $\IA$-line that has equations $x_1 = x_2$, $x_0 = x_2 + x_3$ lie in the plane  $\pi_{12}$, so they are fixed punctually by the matrices of the form \eqref{mat mprime q2}.
The matrix   $ \Mb'_1$ of the proof of Theorem \ref{th4:PlanePointOrbitG2}
maps the $\RA$-line $x_0+x_1 = x_2$, $x_3 = 0$ into the $\IC$-line $x_0 = x_3 $, $x_0+x_1 = x_2$,
maps the $\Tr$-line $x_0= 0$, $x_1 = 0$ into the $\UG_1$-line $x_0 = 0 $, $x_2 = 0$,
maps the $\Tr$-line $x_2= 0$, $x_3 = 0$ into the $\UnG$-line $x_1 = 0 $, $x_2 = x_3$,
maps the $\EG$-line $x_2= 0$, $x_0+x_1 =x_3$ into the $\EnG$-line $x_0 = x_3$, $x_1 = 0 $.
By direct computation, no other line orbits merge.
\end{proof}

\begin{corollary}
Let $q=2$. The line orbits under $G_2\cong\mathbf{S}_3\mathbf{Z}_2^3$ are as follows.
\begin{align}
&\Lc_1=\{\RC\T{-lines}\},  ~\#\Lc_1=3; ~~\Lc_2=\{\IA\T{-lines}\},~\#\Lc_2=1; \db \\
&\Lc_3=\{\UG_2\T{-lines}\},~\#\Lc_3=3;
~~\Lc_4=\{\RA\T{-lines}\} \cup \{\IC\T{-lines}\},~ \#\Lc_4=3+1=4;
\db\\
& \Lc_5=\{\Tr\T{-lines}\} \cup  \{\UG_1\T{-lines}\}   \cup  \{\UnG\T{-lines}\} ,~\#\Lc_5=3+3+6=12;\db\\
& \Lc_6=\{\EG\T{-lines}\} \cup  \{\EnG\T{-lines}\}, ~\#\Lc_6=6+6=12.\notag
\end{align}
\end{corollary}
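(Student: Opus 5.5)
The corollary is a relabelling of Theorem \ref{th4:LineOrbitG2}: we replace each critical orbit $\Lc^*_i$ by its description and size from Theorem \ref{th4:Gq*q=2}(ii). No new computation is needed beyond that substitution and a consistency check.

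First, we take the identities
\[
\Lc_1=\Lc^*_1,\quad \Lc_2=\Lc^*_5,\quad \Lc_3=\Lc^*_7,\quad \Lc_4=\Lc^*_2\cup\Lc^*_4,\quad \Lc_5=\Lc^*_3\cup\Lc^*_6\cup\Lc^*_8,\quad \Lc_6=\Lc^*_9\cup\Lc^*_{10}.
\]
Into these we substitute the descriptions from Theorem \ref{th4:Gq*q=2}(ii). This gives the following:
\begin{itemize}
\item $\Lc^*_1$ is the set of $\RC$-lines, of size $3$.
\item $\Lc^*_5$ is the single $\IA$-line.
\item $\Lc^*_7$ consists of the $3$ $\UG_2$-lines.
\item $\Lc^*_2\cup\Lc^*_4$ consists of the $3$ $\RA$-lines and the single $\IC$-line.
\item $\Lc^*_3\cup\Lc^*_6\cup\Lc^*_8$ consists of $3$ $\Tr$-lines, $3$ $\UG_1$-lines and $6$ $\UnG$-lines.
\item $\Lc^*_9\cup\Lc^*_{10}$ consists of $6$ $\EG$-lines and $6$ $\EnG$-lines.
\end{itemize}
The unions are disjoint because distinct $G_2^*$-orbits are disjoint. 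Hence the cardinalities add, giving $3,1,3,4,12,12$.

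As a sanity check, the total is $3+1+3+4+12+12=35=\beta_{3,2}=(2^2+1)(2^2+2+1)$, by \eqref{eq2:theta-beta}. So these six orbits partition all lines of $\PG(3,2)$.

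The one point that needs care is the label ``$\UG_2$'' for $\Lc_3$. In the proof of Theorem \ref{th4:LineOrbitG2}, the $\UG$-lines lying in $\pi_{12}$ were called $\UG_2$, yet the same proof lists the orbit as $\Lc_2=\Lc^*_5$ rather than $\Lc^*_7$. We would follow the theorem statement, with $\Lc_2=\Lc^*_5$ the $\IA$-line orbit and $\Lc_3=\Lc^*_7$ the $\UG_2$ orbit, since this is what the corollary records. Checking this labelling against Theorem \ref{th4:Gq*q=2} is the only step where an inconsistency could arise; otherwise the proof is immediate.
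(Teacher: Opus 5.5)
Your proposal is correct and follows the paper's own argument: the corollary comes from substituting the types and sizes of the $G_2^*$-orbits from Theorem~\ref{th4:Gq*q=2}(ii) into the unions of Theorem~\ref{th4:LineOrbitG2}, and the $G_2^*$-orbits are disjoint, so the sizes add. Your caution about the label is justified. In the proof of Theorem~\ref{th4:LineOrbitG2}, the sentence on the three $\UG$-lines of $\pi_{12}$ should read $\Lc_3=\Lc^*_7$; these are the lines joining the $3_\Gamma$-point $\Pb(0,1,1,0)$ to the points of $\Cs$. The orbit $\Lc_2=\Lc^*_5$ is the $\IA$-line, exactly as the theorem statement and the corollary record.
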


\begin{proof}
The assertions follow from Theorems \ref{th4:Gq*q=2}, \ref{th4:LineOrbitG2}.
\end{proof}

The above results for $q=2$ are illustrated by Tables \ref{tab4:plane-point q=2}--\ref{tab4:line-plane q=2}.

The \emph{left parts} (columns $1,\ldots,6$) of Tables \ref{tab4:plane-point q=2}--\ref{tab4:line-plane q=2} are as follows.
In the column $1$ and the top of the columns $2,\ldots,6$, see Notation \ref{notat3:2}(i), the entries correspond to Theorem \ref{th4:Gq*q=2}(ii). The order of orbits in the left parts is related to the union of the orbits under $G_2^*$ into the orbits under $G_2$.

In Tables 4.1-4.3 (and in the tables in the next sections), the top and bottom entries in every cell of the left parts, see Notation \ref{notat3:top-bot entriis} and Lemma \ref{lem3:top-bottom entries}, correspond to the known results in accordance with Remark \ref{rem2:KnownIncidMatr} and Lemma \ref{lem3:coincid}. We have checked these entries with the help of the relations \eqref{eq3:=0}--\eqref{eq3:sum i} and the system Magma \cite{Magma}.

Also, in all the tables, the notations $\widebar{a}$, $\widetilde{b}$, $\widehat{c}$, are connected with the following obvious necessary conditions for the union of the orbits, respectively: to merge two or more orbits described in consecutive rows of the tables, the first value of the cells, noted by $\widebar{a}$, must be the same for each orbit. Moreover, these values must be the same in the new merged orbit. To merge two or more orbits described in consecutive columns of the tables, the second value of the cells, noted by $\widetilde{b}$, must be the same for each orbit. Moreover, these values must be the same in the new merged orbit. To merge two or more orbits described in consecutive rows and columns of the tables, the sum of the first values of the cells, noted by $\widehat{c}$,  must be the same for all merged rows and the sum of the second values of the cells, noted by $\widehat{c}$  too, must be the same for all merged columns. Moreover, these sums must be equal to the corresponding values in the new merged orbit.

\begin{table}[htbp]
\caption{Parameters of $\#\Ns_i^*\times\#\Ms_j^*$ submatrices $\Ic_{ij}^{\Pi\Pb*}$ and $\#\Ns_i\times\#\Ms_j$ submatrices $\Ic_{ij}^{\Pi\Pb}$ of the $15\times 15$ plane-point incidence matrix $\Ic^{\Pi\Pb}$ in $\PG(3,q)$ for $q=2$ and orbits of planes $\Ns_i^*,\Ns_i$ and of points $\Ms_j^*,\Ms_j$ under $G_2^*$, $G_2$, respectively}
 \centering
  \begin{tabular}
  {c||c|c|c||c|c||c||c|c|c||c}\hline
\multicolumn{6}{c||}{orbits under $G_2^*\cong PGL(2,2)$}&\multicolumn{5}{c}{orbits under $G_2\cong\mathbf{S}_3\mathbf{Z}_2^3$}\\\hline
$\Ns_i^*:\#\Ns_i^*$&$\Ms_1^*$&$\Ms_3^*$&$\Ms_4^*$&$\Ms_2^*$&$\Ms_5^*$&$\Ns_i:\#\Ns_i$&$\Ms_1$&$\Ms_3$&$\Ms_4$&$\Ms_2$\\
$\pi$&$\Cs$&$3_\Gamma$&$1_\Gamma$&$\Tr$&$0_\Gamma$&$\pi$&$\Cs$&$3_\Gamma$&$1_\Gamma$&$\Tr, 0_\Gamma$\\
$\downarrow$&3&1&3&6&2&$\downarrow$&3&1&3&8\\\hline\hline

$\Ns_2^*: 6$&2&0&1&3&1&$\Ns_2: 6$&2&0&1&4\\
$2_\Cs$   &4&0&2&\tthr&\tthr&$2_\Cs$&4&0&2&\tthr\\\hline

$\Ns_3^*: 1$&3&1&3&0&0&$\Ns_3: 1$&3&1&3&0\\
$3_\Cs$   &1&1&1&\tz&\tz &$3_\Cs$&1&1&1&\tz \\\hline

$\Ns_5^*: 2$&0&0&3&3&1&$\Ns_4: 2$&0&0&3&4\\
$0_\Cs$&0&0&2&\tone&\tone   &$0_\Cs$&0&0&2&\tone\\\hline\hline\vphantom{$H^{H^{H^H}}$}

$\Ns_1^*: 3$   &\beo&\beo&\beo&\wf&\wz&$\Ns_1: 6$&\beo&\beo&\beo&\wf\\
$\Gamma$&1&3&1&\wt&\wz&$\Gamma,\overline{1_\Cs}$&2&6&2&\wthr\\\hline\vphantom{$H^{H^{H^H}}$}

$\Ns_4^*:3$&\beo&\beo&\beo&\wt&\wt&$3+3=6$&\multicolumn{3}{r}{ \wf\pl\wz\el\wt\pl\wt\el\wf}&$\uparrow$\\
$\overline{1_\Cs}$&1&3&1&\wo&\wthr    &&\multicolumn{3}{r}{\wt\pl\wo\el\wz\pl\wthr\el\wthr}&\\\hline
\end{tabular}
\label{tab4:plane-point q=2}
\end{table}

\begin{table}[htbp]
\caption{Parameters of $\#\Lc_i^*\times\#\Ms_j^*$ submatrices $\Ic_{ij}^{\Lambda\Pb*}$ and $\#\Lc_i\times\#\Ms_j$ submatrices $\Ic_{ij}^{\Lambda\Pb}$ of the $35\times 15$ line-point incidence matrix $\Ic^{\Lambda\Pb}$ in $\PG(3,q)$ for $q=2$ and orbits of lines $\Lc_i^*,\Lc_i$ and of points $\Ms_j^*,\Ms_j$ under $G_2^*$, $G_2$, respectively}
 \centering
\begin{tabular} {c||c|c|c||c|c||c||c|c|c||c}\hline
\multicolumn{6}{c||}{orbits under $G_2^*\cong PGL(2,2)$}&\multicolumn{5}{c}{orbits under $G_2\cong\mathbf{S}_3\mathbf{Z}_2^3$}\\\hline
$\Lc^*_i:\#\Lc^*_i$&$\Ms^*_1$&$\Ms^*_3$&$\Ms^*_4$&$\Ms^*_2$&$\Ms^*_5$&$\Lc_i:\#\Lc_i$&$\Ms_1$&$\Ms_3$&$\Ms_4$&$\Ms_2$\\
$\lambda$ &$\Cs$&$3_\Gamma$&$1_\Gamma$&$\Tr$&$0_\Gamma$&$\lambda$&$\Cs$&$3_\Gamma$&$1_\Gamma$&$\Tr,0_\Gamma$\\
$\downarrow$ &3&1&3&6&2  &$\downarrow$ &3&1&3&8\\\hline\hline

$\Lc^*_1: 3$ &2&0&1&0   &0   &$\Lc_1:3$             &2&0&1&0 \\
$\RC$        &2&0&1&\tz&\tz&$\RC$                 &2&0&1&\tz\\\hline

$\Lc^*_5: 1$ &0&0&3&0   &0   &$\Lc_2:1$             &0&0&3&0 \\
$\IA$        &0&0&1&\tz&\tz&$\IA$                 &0&0&1&\tz\\\hline

$\Lc^*_7: 3$ &1&1&1&0   &0   &$\Lc_3:3$             &1&1&1&0 \\
$\UG_2$      &1&3&1&\tz&\tz&$\UG_2$               &1&3&1&\tz\\\hline\hline

$\Lc^*_2: 3 $ &\bez&\beo&\bez&\wt $\vphantom{H^{H^{H^H}}}$ &\wz&$\Lc_4:4$&\bez&\beo&\bez&\wt \\
$\RA$        &0&3&0&\wo&\wz     &$\RA,\IC$          &0&4&0&\wo   \\\hline

$\Lc^*_4: 1$ &\bez&\beo&\bez& \wz &\wt $\vphantom{H^{H^{H^H}}}$ &$3+1=4$&\multicolumn{3}{r}{\wt\pl\wz\el\wz\pl\wt\el\wt} &$\uparrow$ \\
$\IC$        &0&1&0&\wz&\wo&&\multicolumn{3}{r}{\wo\pl\wz\el\wz\pl\wo\el\wo} &  \\\hline\hline

$\Lc^*_3: 3$ &\beo&\bez&\bez&\wt $\vphantom{H^{H^{H^H}}}$  &\wz &$\Lc_5:12$&\beo&\bez&\bez&\wt\\
$\Tr$      &1   &0   &0   &\wo&\wz &$\Tr,\UG_1,\UnG$&4&0&0&\wthr\\\hline

$\Lc^*_6: 3$ &\beo&\bez&\bez&\wt $\vphantom{H^{H^{H^H}}}$ &\wz &$3+3+$&\multicolumn{3}{r}{}& $\uparrow$ \\
$\UG_1$      &1&0&0&\wo&\wz&$6=12$& \multicolumn{4}{r}{\wt\pl\wz\el\wt\pl\wz\el\wo\pl\wo\el\wt}\\\cline{1-6}

$\Lc^*_8: 6$ &\beo&\bez&\bez&\wo $\vphantom{H^{H^{H^H}}}$   &\wo&&\multicolumn{4}{r}{\wo\pl\wo\pl\wo\el\wz\pl\wz\pl\wthr\el\wthr}\\
$\UnG$       &2   &0   &0   &\wo&\wthr&\\\hline\hline

$\Lc^*_9: 6$ &\bez&\bez&\beo&\wt $\vphantom{H^{H^{H^H}}}$ &\wz &$\Lc_6:12$            &\bez&\bez&\beo&\wt \\
$\EG$        &0&0&2&\wt&\wz&$\EG,\EnG$       &0&0&4&\wthr   \\\hline

$\Lc^*_{10}:6$&\bez&\bez&\beo&\wo $\vphantom{H^{H^{H^H}}}$   &\wo&$6+6=12$&\multicolumn{3}{r}{\wt\pl\wz\el\wo\pl\wo\el\wt} &$\uparrow$ \\
$\EnG$      &0&0&2&\wo&\wthr&&\multicolumn{3}{r}{\wt\pl\wo\el\wz\pl\wthr\el\wthr} &\\\hline
\end{tabular}
\label{tab4:line-point q=2}
\end{table}

\begin{table}[htbp]
\caption{Parameters of $\#\Lc_i^*\times\#\Ns_j^*$ submatrices $\Ic_{ij}^{\Lambda\Pi*}$ and $\#\Lc_i\times\#\Ns_j$ submatrices $\Ic_{ij}^{\Lambda\Pi}$ of the $35\times 15$ line-plane incidence matrix $\Ic^{\Lambda\Pi}$ in $\PG(3,q)$ for $q=2$ and orbits of lines $\Lc_i^*,\Lc_i$ and of planes $\Ns_j^*,\Ns_j$ under $G_2^*$, $G_2$, respectively}
 \centering
  \begin{tabular} {c||c|c|c||c|c||c||c|c|c||c}\hline
\multicolumn{6}{c||}{orbits under $G_2^*\cong PGL(2,2)$}&\multicolumn{5}{c}{orbits under $G_2\cong\mathbf{S}_3\mathbf{Z}_2^3$}\\\hline
$\Lc^*_i:\#\Lc^*_i$&$\Ns^*_2$&$\Ns^*_3$&$\Ns^*_5$&$\Ns^*_1$&$\Ns^*_4$&$\Lc_i:\#\Lc_i$&$\Ns_2$&$\Ns_3$&$\Ns_5$&$\Ns_1$\\
$\lambda$ &$2_\Cs$&$3_\Cs$&$0_\Cs$&$\Gamma$&$\overline{1_\Cs}$&$\lambda$ &$2_\Cs$&$3_\Cs$&$0_\Cs$&$\Gamma,\overline{1_\Cs}$\\
$\downarrow$ &6&1&2&3&3&$\downarrow$ &6&1&2&6\\\hline\hline

$\Lc^*_1: 3$   &2&1&0&0&0&$\Lc_1:3$&2&1&0&0 \\
$\RC$          &1&3&0&\tz&\tz&$\RC$    &1&3&0&\tz \\\hline

$\Lc^*_5: 1$   &0&1&2&0&0&$\Lc_2:1$&0&1&2&0 \\
$\IA$          &0&1&1&\tz&\tz&$\IA$    &0&1&1&\tz\\\hline

$\Lc^*_7: 3$   &0&1&0&1&1&$\Lc_3:3$&0&1&0&2 \\
$\UG_2$        &0&3&0&\tone&\tone&$\UG_2$  &0&3&0&\tone \\\hline\hline

$\Lc^*_2: 3$   &\bez&\bez&\bez&\wt $\vphantom{H^{H^{H^H}}}$&\wo&$\Lc_4:4$   &\bez&\bez&\bez&\wthr \\
$\RA$          &0&0&0&\wt&\wo&$\RA,\IC$&0&0&0&\wt \\\hline

$\Lc^*_4: 1$   &\bez&\bez&\bez&\wz $\vphantom{H^{H^{H^H}}}$ &\wthr&$3+1=4$&\multicolumn{3}{r}{\wt\pl\wo\el\wz\pl\wthr\el\wthr}&$\uparrow$\\
$\IC$          &0&0&0&\wz&\wo&&\multicolumn{3}{r}{\wt\pl\wz\el\wo\pl\wo\el\wt}&\\\hline\hline

$\Lc^*_3: 3$   &\bet&\bez&\bez&\wo $\vphantom{H^{H^{H^H}}}$&\wz&$\Lc_5:12$            &\bet&\bez&\bez&\wo\\
$\Tr$          &1&0&0&\wo&\wz     &$\Tr,\UG_1,\UnG$&4&0&0&\wt\\\hline

$\Lc^*_6: 3$   &\bet&\bez&\bez&\wo $\vphantom{H^{H^{H^H}}}$&\wz&$3+3+$&\multicolumn{3}{r}{}&$\uparrow$ \\
$\UG_1$        &1&0&0&\wo&\wz&$6=12$    &\multicolumn{4}{r}{\wo\pl\wz\el\wo\pl\wz\el\wz\pl\wo\el\wo}\\\cline{1-6}

$\Lc^*_8: 6$   &\bet&\bez&\bez&\wz&\wo $\vphantom{H^{H^{H^H}}}$&& \multicolumn{4}{r}{\wo\pl\wo\pl\wz\el\wz\pl\wz\pl\wt\el\wt}\\
$\UnG$         &2&0&0&\wz&\wt&\\\hline\hline

$\Lc^*_9: 6$   &\beo&\bez&\beo&\wo $\vphantom{H^{H^{H^H}}}$&\wz&$\Lc_6:12$     &\beo&\bez&\beo&\wo\\
$\EG$          &1&0&3&\wt&\wz&$\EG,\EnG$&2&0&6&\wt\\\hline

$\Lc^*_{10}:6$ &\beo&\bez&\beo&\wz&\wo $\vphantom{H^{H^{H^H}}}$&$6+6=12$&\multicolumn{3}{r}{\wt\pl\wz\el\wz\pl\wt\el\wt}&$\uparrow$\\
$\EnG$       &1&0&3&\wz&\wt&&\multicolumn{3}{r}{\wo\pl\wz\el\wz\pl\wo\el\wo}&\\\hline
\end{tabular}
\label{tab4:line-plane q=2}
\end{table}

\section{Orbits and incidence matrices for $q=3$}\label{sec5:q=3}

Theorem \ref{th2:Gq q=234} states that $G_3\cong\mathbf{S}_4\mathbf{Z}_2^3$. As for the case $q = 2$, a way to obtain a
projective frame in $\PG(3,3)$ is taking the points of the twisted cubic $\Cs$ and one in general position with respect to three points of $\Cs$.

Let $\pi_{ijk}$ be the plane through the points $P(i), P(j),P(k)$, $i,j,k\in\{-1,0,1,\infty\}$ of the twisted cubic.
There are
\begin{align*}
&\#\PG(3,3)-\#\pi_{-101}-(\#\pi_{-10\infty}-\#(\pi_{-10\infty}\cap\pi_{-101}))\db\\
&-(\#\pi_{-11\infty}-\#\{P(\infty)\}-\#(\pi_{-11\infty}\cap\pi_{-101})-(\#(\pi_{-11\infty}\cap\pi_{-10\infty})-\#\{ P(-1),P(\infty) \}))\db\\
&-(\#\pi_{p1\infty}-\#\{P(\infty)\}-\#(\pi_{01\infty}\cap\pi_{-101})-(\#(\pi_{01\infty}\cap\pi_{-10\infty})-\#\{P(0),P(\infty)\})\db\\
&   -(\#(\pi_{01\infty}\cap\pi_{-11\infty}) - \#\{ P(1),P(\infty) \}))\db\\
&=40- 13 - (13-4) - (13 - 1 - 4 - (4-2)) - (13 - 1 - 4 -(4-2) -(4-2)) =8
\end{align*}
such points in general position. Let $\Ic$ be the set of this $8$ points.
 This explains the structure of $G_3\cong\mathbf{S}_4\mathbf{Z}_2^3$. The $\mathbf{S}_4$ part corresponds to the
possibility of permuting the four points of $\Cs$, the $\mathbf{Z}_2^3$
part corresponds to the possibility
of swapping two points of $\Cs$, fixing $\Ic$ punctually.
A matrix corresponding to a projectivity of $G_3$ is the product of a matrix $\Mb$ of the form \ref{eq2:q ge5 matrixGq} and a matrix  $\Mb''$  of the form
  \begin{equation}\label{mat mprime q3}
 \Mb''=\left[
 \begin{array}{cccc}
 1&0&0&0\\
 e&f+1&e&g\\
 f&e&f+1&e\\
 0&0&0&1+f-g
 \end{array}
  \right],~e,f,g\in\F_3,~ e \ne \pm(f+1), g \ne f+1.
  \end{equation}

The projectivity corresponding to a matrix $\Mb''$ fixes the twisted cubic punctually,  and permutes the points of $\Ic$.

 \begin{theorem}\label{th5:Gq*q=3}
Let $q=3$.

\textbf{\emph{(i)}}
  The group $G_3\cong\mathbf{S}_4\mathbf{Z}_2^3$ contains $24$ subgroups isomorphic to $PGL(2,3)$ divided into four conjugacy classes.
For one of these subgroups (and only for it), the matrices corresponding to its projectivities
assume the form described by \eqref{eq2:q ge5 matrixGq}. So, this subgroup (and only it) can be treated as a critical subgroup $G_3^*$.

\textbf{\emph{(ii)}} The plane, points, and line orbits $\Ns^*_i,\Ms^*_i$, and $\Lc^*_i$  under the critical subgroup $G_3^*$ are the same as the orbits $\Ns_i,\Ms_i$, and $\Lc_i$ in Theorems \emph{\ref{th2:Hirs}(ii)(iv)},  \emph{\ref{th2:line orbits q>=5}(ii)}. We have,
\begin{align*}
&\Ns^*_1=\{\Gamma\T{-planes}\},~\#\Ns^*_1=4;~\Ns^*_{2}=\{2_\Cs\T{-planes}\},~\#\Ns^*_2=12;~\Ns^*_{3}=\{3_\Cs\T{-planes}\},\db\\
&\#\Ns^*_3=4;~ \Ns^*_{4}=\{\overline{1_\Cs}\T{-planes}\},~ \#\Ns^*_4=12;~\Ns^*_{5}=\{0_\Cs\T{-planes}\},~ \#\Ns^*_5=8.\db\\
& \Ms^*_1=\{\Cs\T{-points}\},\, \Ms^*_2=\{4_\Gamma\T{-points}\},~ \#\Ms^*_1=\#\Ms^*_2=4;~
\Ms^*_3=\{\TO\T{-points}\},\db\\
& \#\Ms^*_3=8;~ \Ms^*_4=\{\RC\T{-points}\},~ \Ms^*_5=\{\IC\T{-points}\};~\#\Ms^*_4=\#\Ms^*_5=12.\\
&\Lc^*_1=\{\RC\T{-lines}\},~ \#\Lc^*_1=6;~ \Lc^*_2=\{\Tr\T{-lines}\},~ \#\Lc^*_2=4; ~\Lc^*_3=\{\IC\T{-lines}\},~ \#\Lc^*_3=3; ~\db\\
&\Lc^*_4=\{\UG\T{-lines}\},~ \#\Lc^*_4=12; ~\Lc^*_5=\{\UnG_1\T{-lines}\},~ \Lc^*_6=\{\UnG_2\T{-lines}\}, \db\\
&\#\Lc^*_5=\#\Lc^*_6=12;~\Lc^*_7=\{\Ar\T{-lines}\},~ \#\Lc^*_7=1;~ \Lc^*_8=\{\EA_1\T{-lines}\},~ \#\Lc^*_8=24;\db\\
&\Lc^*_9=\{\EA_2\T{-lines}\},~ \Lc^*_{10}=\{\EA_3\T{-lines}\},~ \#\Lc^*_9=\#\Lc^*_{10}=4; ~ \Lc^*_{11}=\{\EnG_1\T{-lines}\},\db\\
&\#\Lc^*_{11}=24;~\Lc^*_{12}=\{\EnG_2\T{-lines}\},~ \Lc^*_{13}=\{\EnG_3\T{-lines}\},~ \#\Lc^*_{12}=\#\Lc^*_{13}=12.\db
\end{align*}
The other seven subgroups of the conjugacy class of $G_3^*$ have five point orbits, too. Three orbits are $\Ms^*_1$, $\Ms^*_3$, $\Ms^*_4$. The 16 points of  $\Ms^*_2$, $\Ms^*_5$ are arranged into two sets of size 4 and 12 that are different from $\Ms^*_2$, $\Ms^*_5$.

Another conjugacy class  contains eight subgroups. These subgroups have eight point orbits. One is  $\Ms^*_1$, the points of $\Ms^*_3$ are divided into three orbits of sizes $1,3,12$. The fixed point is different for each subgroup. The remaining points are divided into four orbits of sizes $4,6,6,12$.

The last two conjugacy classes contain four subgroups. In both cases, $\Ms^*_1$ is divided into two orbits of sizes $1$ and $3$; in each class, any subgroup fixes a different point. For the subgroups of a class, $\Ms^*_3$ is an orbit, for the subgroups of the other class, the points of  $\Ms^*_3$ are divided into two orbits of sizes $4$. In both
cases the remaining points are divided into four orbits of sizes $4,6,6,12$.
\end{theorem}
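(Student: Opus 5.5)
The plan is to reduce everything to explicit finite computations inside $G_3$, organised so that each claim is checked in a controlled way.

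First, describe $G_3$ concretely. Its elements are products $\Mb\Mb''$, with $\Mb$ of the form \eqref{eq2:q ge5 matrixGq} over $\F_3$ and $\Mb''$ of the form \eqref{mat mprime q3}. Let $K$ be the kernel of the action of $G_3$ on the four points of $\Cs$. Then $K\cong\mathbf{Z}_2^3$, it is realised by the matrices $\Mb''$, and $G_3/K\cong\mathbf{S}_4$.

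Next, find the subgroups isomorphic to $PGL(2,3)\cong\mathbf{S}_4$.
\begin{itemize}
\item The set $W$ of all matrices of the form \eqref{eq2:q ge5 matrixGq} is one such subgroup, and $W\cap K=1$. So $W$ is a complement to $K$.
\item A subgroup $H\cong\mathbf{S}_4$ either meets $K$ trivially, and is then a complement, or it does not.
\item The complements to $K$ correspond to $1$-cocycles $\mathbf{S}_4\to K$ for the conjugation action on $K$. Since $K$ is elementary abelian of order $8$, $Z^1$ and $H^1$ can be computed directly; their number and the $G_3$-conjugacy classes follow.
\item Non-complement subgroups isomorphic to $\mathbf{S}_4$ must also be searched for. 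I would do this with Magma, as the paper does elsewhere, enumerating subgroups of a group of order $192$.
\end{itemize}
The expected outcome is $24$ subgroups in four conjugacy classes of sizes $8,8,4,4$.

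Third, establish uniqueness of the critical subgroup. A subgroup $H$ all of whose matrices have the form \eqref{eq2:q ge5 matrixGq} is contained in $W$. Since $\#W=\#PGL(2,3)=\#H$, we get $H=W$. This gives part~(i).

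For part~(ii), the orbits of $G_3^*=W$ are exactly those of Theorems \ref{th2:Hirs}(ii)(iv) and \ref{th2:line orbits q>=5}(ii) with $q=3$, by Lemma \ref{lem3:coincid}. The sizes are then read off with $q=3$:
\begin{itemize}
\item planes: $4,12,4,12,8$;
\item points: $4,4,8,12,12$;
\item lines: $6,4,3,12,12,12,1,24,4,4$, then $q/3=1$ orbit $\EnG_1$ of size $24$, $q-1=2$ orbits $\EnG_2,\EnG_3$ of size $12$, and $(2q-6)/3=0$ orbits of size $6$.
\end{itemize}
The line sizes sum to $130=\beta_{3,3}$, which is a consistency check.

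Last, for the remaining claims about point orbits of the other subgroups, take one representative $H$ per conjugacy class, of the form $\{\Mb\,\kappa(\Mb)\}$ with $\kappa$ a cocycle. Compute its point orbits on the $40$ points. Conjugating by elements of $K$, which fix $\Cs$ pointwise and permute the set $\Ic$, shows how the orbits vary inside each class, for example the differing fixed point.

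The main obstacle is the classification of the $\mathbf{S}_4$-subgroups and of their conjugacy classes. I would first try the cohomological count; if it does not account for all $24$ subgroups, I would rely on an exhaustive computer enumeration and verify the orbit data by direct computation.
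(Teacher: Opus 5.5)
Your plan is correct, and in substance it is the paper's proof, which is just ``by direct computation''; you add structure on top of it. Writing $G_3=K\rtimes W$, with $K\cong\mathbf{Z}_2^3$ the matrices $\Mb''$ and $W$ the matrices of the form \eqref{eq2:q ge5 matrixGq}, is useful. So is proving uniqueness of $G_3^*$ by ``$H\subseteq W$ and $\#H=\#W$'', which is cleaner than an unexplained computation.

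In fact the subgroup count can be finished without Magma, and your cohomological count will certainly not reach $24$. As an $\mathbf{S}_4$-module, $K\cong\F_2^4/\langle(1,1,1,1)\rangle$, i.e.\ sign patterns on the four points of $\Cs$ modulo $\pm1$. One checks $K^{\mathbf{S}_4}=0$ and $H^1(\mathbf{S}_4,K)\cong\mathbf{Z}_2$. So there are exactly $16$ complements, in two classes of $8$, and the remaining $8$ subgroups are non-complements.

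For such an $H$, $H\cap K$ is a nontrivial normal elementary abelian $2$-subgroup of $H\cong\mathbf{S}_4$, hence a Klein group. Consequently:
\begin{itemize}
\item $HK/K$ has order $6$, so it is a point stabilizer $W_P\cong\mathbf{S}_3$.
\item $H\cap K=V$, the unique $W_P$-invariant subgroup of order $4$ of $K$ (the even sign patterns). $V$ is normal in $G_3$, and $K/V$ is a trivial module.
\item $H/V$ is one of the two complements of $K/V\cong\mathbf{Z}_2$ in $KW_P/V\cong\mathbf{Z}_2\times\mathbf{S}_3$. So $H$ is $VW_P$ or its sign-twisted version, both isomorphic to $\mathbf{S}_4$.
\end{itemize}
This gives two subgroups per point of $\Cs$, forming two classes of $4$, as the statement claims.

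Your last paragraph is wrong for these two non-complement classes. Their members are not of the form $\{\Mb\,\kappa(\Mb)\}$ for a cocycle $\kappa$ on $W$, since they do not map onto $\mathbf{S}_4$. Also, $K$ normalizes each of them because $[K,W_P]\subseteq V$, so conjugation by $K$ does not move them. The four members of such a class are instead permuted by conjugation by $W$, which moves the fixed point $P\in\Cs$. Take representatives for these classes from your enumeration or from the description above.

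Finally, consider the second class of complements. These are the $K$-conjugates of the group of permutation matrices with respect to the frame $\Cs$, and each fixes one point of $\Ic$, i.e.\ one $\TO$-point. Your computation will split $\Ms^*_3$ as $1+3+4$, not $1+3+12$ as printed. Only $1+3+4$ is consistent with $\#\Ms^*_3=8$ and with the total of $40$ points.
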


\begin{proof}
By direct computation.

\end{proof}

\begin{theorem}\label{th4:PlanePointOrbitG3}
Let $q=3$. Under $G_3\cong\mathbf{S}_4\mathbf{Z}_2^3$, there are $4$ plane orbits $\Ns_i$ and $4$ the point orbits $\Ms_j$ such that
\begin{align}
&\Ns_1=\Ns^*_1\cup\Ns^*_4,~\Ns_{2}=\Ns^*_{2},~\Ns_{3}=\Ns^*_{3},~\Ns_{4}=\Ns^*_{5}.\label{eq4:PlaneOrbitG2-1}\db\\
&\Ms_1=\Ms^*_1,~ \Ms_2=\Ms^*_2\cup\Ms^*_5,~\Ms_3=\Ms^*_3~ \Ms_4=\Ms^*_4.\label{eq4:PointOrbitG3-1}
\end{align}
\end{theorem}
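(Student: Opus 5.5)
Since $G_3^*\subseteq G_3$, every $G_3$-orbit is a union of $G_3^*$-orbits. The plan is therefore to start from the orbits of Theorem \ref{th5:Gq*q=3}(ii) and determine exactly which of them merge, proceeding as in the proof of Theorem \ref{th4:PlanePointOrbitG2}.

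\textbf{Step 1: which point orbits cannot merge.} A projectivity $\Mb''$ of the form \eqref{mat mprime q3} fixes $\Cs$ pointwise. Together with the matrices \eqref{eq2:q ge5 matrixGq}, it generates $G_3$. Hence every element of $G_3$ preserves $\Cs$ and permutes the real chords and tangents among themselves, so the following sets are $G_3$-invariant:
\begin{itemize}
\item $\Ms^*_1$, the set of $\Cs$-points;
\item $\Ms^*_4$, the set of points off $\Cs$ on real chords;
\item $\Ms^*_3$, the set of points off $\Cs$ on tangents.
\end{itemize}
For $\Ms^*_3$ I will double-check that a point on a tangent lies on no real chord, e.g. by the count $4\cdot 2=8$ $\TO$-points. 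Osculating planes need not be preserved by $\Mb''$, but the characterizations "on a tangent" and "on a real chord" depend only on $\Cs$. So $\Ms_1=\Ms^*_1$, $\Ms_3=\Ms^*_3$ and $\Ms_4=\Ms^*_4$.

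\textbf{Step 2: exhibiting the merge $\Ms^*_2\cup\Ms^*_5$.} The 16 remaining points are $\Ms^*_2\cup\Ms^*_5$, and this union is $G_3$-invariant. I will take an explicit point of $\Ms^*_2$, a $4_\Gamma$-point (for instance the common point of the four osculating planes, computed from $\pi_\T{osc}(t)$ for $t\in\{0,\pm1,\infty\}$). I will then find parameters $e,f,g$ in \eqref{mat mprime q3} such that $\Mb''$ sends it to a point lying on an imaginary chord. A short search over the admissible triples $(e,f,g)$ will do; verifying that the image is an $\IC$-point just means checking that it lies on no real chord or tangent and off $\Cs$. This gives exactly four point orbits. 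The main obstacle is this explicit choice and verification; the Magma check mentioned in the paper confirms it.

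\textbf{Step 3: planes.} By Theorem \ref{th2:plane=point}, there are exactly four plane orbits. The sets $\Ns^*_2$, $\Ns^*_3$ and $\Ns^*_5$ are distinguished by $\#(\pi\cap\Cs)\in\{2,3,0\}$, which is $G_3$-invariant. So the only possible merge is among the planes meeting $\Cs$ in one point, namely $\Ns^*_1\cup\Ns^*_4$. Since there must be four plane orbits, this union is a single orbit. As a sanity check, I will also exhibit an $\Mb''$ mapping a $\Gamma$-plane (e.g. $x_3=0$) to an $\overline{1_\Cs}$-plane. Alternatively, I will use the tactical-decomposition consistency from the tables via Lemma \ref{lem3:top-bottom entries}.
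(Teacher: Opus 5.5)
There is a genuine gap in Step 1, in the justification of $\Ms_3=\Ms^*_3$. Your Steps 2 and 3 are sound and close to the paper; in Step 3, combining the invariance of $\#(\pi\cap\Cs)$ with Theorem \ref{th2:plane=point} is a valid variant of exhibiting the merge explicitly.

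The problem is your claim that $G_3$ permutes the tangents. For $q=3$ the tangents are \emph{not} determined by the point set $\Cs$. The four points of $\Cs$ are merely the vertices of a tetrahedron, and the pointwise stabilizer $\mathbf{Z}_2^3$ moves the tangent at a vertex among the four lines through that vertex lying in no face. Concretely, $\Mb''_1$ (with $e=f=0$, $g=-1$) fixes $P(\infty)=\Pb(1,0,0,0)$ and sends $\Pb(0,1,0,0)$ to $\Pb(0,1,0,-1)$. Hence it maps the tangent $x_2=x_3=0$ to the line $x_2=0$, $x_1=-x_3$, which is not a tangent. This is exactly why $\Tr$-lines fuse with $\UnG_2$-lines in Theorem \ref{th5:LineOrbitG3}.

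Moreover, the set you use, ``points off $\Cs$ on a tangent'', is not $\Ms^*_3$. Since $q\equiv0\pmod 3$, each tangent lies in its osculating plane and so meets the axis $\Ar$. Besides its two $\TO$-points it therefore carries one $4_\Gamma$-point, so the set has $12$ points, namely $\Ms^*_2\cup\Ms^*_3$. If that set were $G_3$-invariant, no point of $\Ms^*_2$ could be mapped into $\Ms^*_5$. So Step 1 as written contradicts your own Step 2.

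What is missing is a description of the $\TO$-points in terms of the point set $\Cs$ alone. The paper's is that the $\TO$-points lie on none of the four $3_\Cs$-planes. The four faces of the tetrahedron cover $4+12+16=32$ points: the vertices, the $\RC$-points on the edges, and $16$ face-interior points. So exactly $8$ points lie off all faces, and the $8$ $\TO$-points are precisely this set $\Ic$. The set $\Ic$ is $G_3$-invariant, because $G_3$ permutes the points of $\Cs$ and hence the four $3_\Cs$-planes.

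With this replacement your argument goes through. Without it, Steps 1--2 do not rule out a single $24$-point orbit $\Ms^*_2\cup\Ms^*_3\cup\Ms^*_5$ (note $24$ divides $\#G_3=192$). In that case Theorem \ref{th2:plane=point} would give only three plane orbits, and Step 3 would fail as well.
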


\begin{proof}
A matrix of the form \eqref{mat mprime q3} fixes $\Cs$ punctually and maps a real chord  into itself. Therefore  $\Ms_1=\Ms^*_1,~ \Ms_4=\Ms^*_4$.
Tangents lie outside 3-secant planes, so by the definition of the matrices of the form \eqref{mat mprime q3} $\Ms_3=\Ms^*_3$.

The matrix $\Mb''_1$ of the form \eqref{mat mprime q3} with $e = f = 0$, $g = -1$, maps
the $4_\Gamma$-point $\Pb(0,1,0,0)$ into the $\IC$-point $\Pb(0,1,0,-1)$.
The matrix $\Mb''_1$ also maps
the $\Gamma\T{-plane}$ $x_3=0$ into the $\overline{1_\Cs}\T{-plane}$ $x_1 = - x_3$.
By Theorem \ref{th2:plane=point}, $G_3$ has four orbits of planes, too.
\end{proof}

\begin{corollary}
Let $q=3$. The plane and point orbits under $G_3\cong\mathbf{S}_4\mathbf{Z}_2^3$ are as follows.
\begin{align}
&\Ns_1=\{\Gamma\T{-planes},\overline{1_\Cs}\T{-planes}\},~\#\Ns_1=4+12=16;~
\Ns_{2}=\{2_\Cs\T{-planes}\},~\#\Ns_2=12;\label{eq4:PlaneOrbitG3-2}\db\\
&\Ns_{3}=\{3_\Cs\T{-planes}\},~\#\Ns_3=4;~\Ns_{4}=\{0_\Cs\T{-planes}\},~ \#\Ns_4=8.\notag\db\\
&\Ms_1=\{\Cs\T{-points}\};~ \Ms_2=\{4_\Gamma\T{-points}, \IC\T{-points}\},~\#\Ms_2=4+12=16;\label{eq4:PointOrbitG3-2}\db\\
&\Ms_3=\{\TO\T{-points}\},~\#\Ms_3=8;~ \Ms_4=\{\RC\T{-points}\},~\#\Ms_4=12.\notag
\end{align}
\end{corollary}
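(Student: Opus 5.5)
The corollary is a direct translation of Theorem \ref{th4:PlanePointOrbitG3} into the language of types, using the orbit descriptions under the critical subgroup given in Theorem \ref{th5:Gq*q=3}(ii). The proof is therefore a bookkeeping argument, done in three steps.

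\emph{Planes.} I substitute the types and sizes of the $G_3^*$-orbits into \eqref{eq4:PlaneOrbitG2-1}. Since $\Ns^*_1$ consists of the $4$ $\Gamma$-planes and $\Ns^*_4$ of the $12$ $\overline{1_\Cs}$-planes, the orbit $\Ns_1=\Ns^*_1\cup\Ns^*_4$ has size $16$. The other plane orbits are unchanged: $\Ns_2=\Ns^*_2$ ($12$ planes of type $2_\Cs$), $\Ns_3=\Ns^*_3$ ($4$ planes of type $3_\Cs$) and $\Ns_4=\Ns^*_5$ ($8$ planes of type $0_\Cs$).

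\emph{Points.} In the same way, \eqref{eq4:PointOrbitG3-1} gives the following. The orbit $\Ms_2=\Ms^*_2\cup\Ms^*_5$ is the union of the $4$ points of type $4_\Gamma=(q+1)_\Gamma$ and the $12$ $\IC$-points, so it has size $16$. The remaining orbits are $\Ms_1=\Ms^*_1$ (the $4$ $\Cs$-points), $\Ms_3=\Ms^*_3$ (the $8$ $\TO$-points) and $\Ms_4=\Ms^*_4$ (the $12$ $\RC$-points).

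\emph{Consistency check.} The sizes add up correctly: $16+12+4+8=40=\theta_{3,3}$ for planes, and $4+16+8+12=40$ for points. This agrees with \eqref{eq2:theta-beta}, and it agrees with Theorem \ref{th2:plane=point}, which requires equally many plane and point orbits.

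The only point that needs care is that the unions are genuine $G_3$-orbits rather than merely sets that a single element of $G_3$ connects. This is already settled in Theorem \ref{th4:PlanePointOrbitG3}, which I take as given. There, the matrix $\Mb''_1$ joins the $G_3^*$-orbits in each merged pair; the remaining $G_3^*$-orbits are shown to be $G_3$-invariant; and the count of plane orbits is fixed by Theorem \ref{th2:plane=point}. Consequently no real obstacle remains, and the corollary follows immediately from Theorems \ref{th5:Gq*q=3} and \ref{th4:PlanePointOrbitG3}.
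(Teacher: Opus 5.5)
Your proposal is correct and follows the paper's route. The paper's proof likewise reads off the types and sizes of the $G_3^*$-orbits from Theorem \ref{th5:Gq*q=3}(ii) and combines them according to the unions in Theorem \ref{th4:PlanePointOrbitG3}; your size check ($16+12+4+8=40$ for planes and $4+16+8+12=40$ for points) is an extra sanity check not present in the paper.
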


\begin{proof}
The assertions follow from Theorems \ref{th5:Gq*q=3}, \ref{th4:PlanePointOrbitG3}.
\end{proof}

\begin{theorem}\label{th5:LineOrbitG3}
Let $q=3$. Under $G_3\cong\mathbf{S}_4\mathbf{Z}_2^3$, there are $7$ line orbits $\Lc_i$ such that
\begin{align*}
&\Lc_1=\Lc^*_1, ~\Lc_2=\Lc^*_{12}, ~\Lc_3=\Lc^*_2 \cup \Lc^*_6, ~\Lc_4=\Lc^*_3 \cup \Lc^*_7 \cup  \Lc^*_9, \db\\
&\Lc_5=\Lc^*_4 \cup \Lc^*_5, ~\Lc_6=\Lc^*_8 \cup \Lc^*_{11}
~\Lc_7=\Lc^*_{10} \cup \Lc^*_{13}.
\end{align*}

\end{theorem}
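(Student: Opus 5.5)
Since $G_3^*\le G_3$, every $G_3$-orbit of lines is a union of the thirteen $G_3^*$-orbits $\Lc^*_1,\ldots,\Lc^*_{13}$ of Theorem~\ref{th5:Gq*q=3}(ii). The proof therefore splits into two parts. First, exhibit enough elements of $G_3$ outside $G_3^*$ to force the stated merges. Second, show that no further merging occurs.

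\emph{Merging.} I will use the factorization of every element of $G_3$ as a product of a matrix $\Mb$ of the form \eqref{eq2:q ge5 matrixGq} and a matrix $\Mb''$ of the form \eqref{mat mprime q3}. It then suffices to find, for each union in the statement, specific matrices $\Mb''$ that send a chosen representative of one $G_3^*$-orbit to a line of another. The natural first candidate is the matrix $\Mb''_1$ ($e=f=0$, $g=-1$) from the proof of Theorem~\ref{th4:PlanePointOrbitG3}, since it already merges $\Ns^*_1$ with $\Ns^*_4$ and $\Ms^*_2$ with $\Ms^*_5$.

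Concretely, I will apply $\Mb''_1$ to representatives as follows.
\begin{itemize}
\item A tangent, e.g. $x_0=x_1=0$, to obtain a $\UnG_2$-line.
\item The axis $\Ar$ (the common line of the osculating planes, which for $q=3$ is $x_1=x_2=0$ up to the sign conventions), and an $\EA_2$-line, to reach an $\IC$-line.
\item A $\UG$-line, to reach a $\UnG_1$-line.
\item An $\EA_1$-line, to reach an $\EnG_1$-line.
\item An $\EA_3$-line, to reach an $\EnG_3$-line.
\end{itemize}
If $\Mb''_1$ alone does not connect all three of $\Lc^*_3,\Lc^*_7,\Lc^*_9$, I will compose it with a second $\Mb''$ (other admissible $e,f,g$). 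The sizes are consistent with the claimed unions: $4+12=16$, $3+1+4=8$, $12+12=24$, $24+24=48$, $4+12=16$, together with $6$ and $12$, summing to $130=\beta_{3,3}$.

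\emph{No further merging.} The efficient route is to use invariants before any exhaustive check.
\begin{itemize}
\item Lines through two $\Cs$-points are preserved by $G_3$, since $G_3$ fixes $\Cs$. Hence $\Lc^*_1$ (the $\RC$-lines, the only bisecants) is a $G_3$-orbit by itself.
\item The number of $\Cs$-points on a line is a $G_3$-invariant. This separates the unisecants $\Lc^*_2,\Lc^*_4,\Lc^*_5,\Lc^*_6$ from the external lines.
\item The $G_3$-orbits of points and planes are already known from Theorem~\ref{th4:PlanePointOrbitG3}. So the vector counting, for a line, how many of its points lie in each of $\Ms_1,\ldots,\Ms_4$, and how many of the planes through it lie in each of $\Ns_1,\ldots,\Ns_4$, is a $G_3$-invariant.
\end{itemize}
These vectors are the top entries of the $G_3^*$ incidence tables, with columns summed over $\Ms^*_2\cup\Ms^*_5$ and over $\Ns^*_1\cup\Ns^*_4$, and they can be read off the known $q\ge5$, $\xi=0$ results via Lemma~\ref{lem3:coincid}.

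The aim is to check that these invariant vectors coincide within each claimed union and separate distinct unions. The main obstacle is that these invariants may fail to distinguish some pairs, for example $\Lc_2=\Lc^*_{12}$ from $\Lc_7$, or $\Lc_3$ from $\Lc_5$. When that happens I will either use a finer invariant, such as how many lines of a given $G_3$-orbit meet the line, or the orbit structure of its stabilizer, or else fall back on the argument used for Theorem~\ref{th4:LineOrbitG2}: a direct computation (Magma) of the orbits of the 48-element-per-coset action on the 130 lines, confirming that exactly 7 orbits occur.
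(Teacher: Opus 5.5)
\textbf{Verdict.} Your proposal is correct. The merging half is essentially the paper's, but your argument that nothing else merges takes a genuinely different and cleaner route.

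\textbf{Merging.} The paper also exhibits images under $\Mb''_1$: tangent $\to\UnG_2$, $\Ar\to\EA_2$, $\UG\to\UnG_1$, $\EA_1\to\EnG_1$, $\EA_3\to\EnG_3$. As you anticipated, it needs a second matrix, $\Mb''_2$ with $e=f=-1$, $g=1$, which sends $\Ar$ to an $\IC$-line.

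\textbf{No further merging.} This is where the routes differ.
\begin{itemize}
\item The paper isolates $\Lc_1$ exactly as you do. It isolates $\Lc_2=\Lc^*_{12}$ by counting intersections with the set $\Ic$ of the $8$ points on no $3_\Cs$-plane. Everything else it settles ``by direct computation''.
\item Your invariant does all of this at once and without a computer. The set $\Ic$ is precisely $\Ms_3$, the $\TO$-points, so the paper's $\Ic$-count is one coordinate of your vector $(\#(\ell\cap\Ms_1),\#(\ell\cap\Ms_2),\#(\ell\cap\Ms_3),\#(\ell\cap\Ms_4))$.
\item Reading Table~\ref{tab:line-point q=3}, the seven claimed unions $\Lc_1,\dots,\Lc_7$ give $(2,0,0,2)$, $(0,0,2,2)$, $(1,1,2,0)$, $(0,4,0,0)$, $(1,2,0,1)$, $(0,2,1,1)$, $(0,1,0,3)$. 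These are pairwise distinct.
\item So the coincidences you feared ($\Lc_2$ vs.\ $\Lc_7$, $\Lc_3$ vs.\ $\Lc_5$) do not occur. Neither finer invariants nor the Magma fallback are needed.
\item You do not even need Theorem~\ref{th4:PlanePointOrbitG3}. The sets $\Cs$, the $\RC$-points, the points on no $3_\Cs$-plane, and the remaining $16$ points are $G_3$-invariant simply because $G_3$ fixes $\Cs$.
\end{itemize}
What your route gains is a computer-free proof of the separation direction. What it costs is reliance on the $G_3^*$ incidence numbers imported through Lemma~\ref{lem3:coincid}.

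\textbf{Two representatives to fix.} Two of your concrete choices would make the computation fail as written.
\begin{itemize}
\item For $q=3$ the osculating planes are $x_0=t^3x_3$ and $x_3=0$, so the axis $\Ar$ is $x_0=x_3=0$. The line $x_1=x_2=0$ is the real chord $\overline{P(0)P(\infty)}$, which every $\Mb''$ fixes.
\item $\Mb''_1$ acts on row vectors, the convention under which it fixes $\Cs$ pointwise, as $(x_0,x_1,x_2,x_3)\mapsto(x_0,x_1,x_2,-x_1-x_3)$. Hence it fixes the tangent $x_0=x_1=0$ at $P(0)$. Use instead the tangent $x_2=x_3=0$ at $P(\infty)$, whose image $x_2=0$, $x_1=-x_3$ is the $\UnG_2$-line used in the paper.
\end{itemize}
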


\begin{proof}
A matrix of the form \eqref{mat mprime q3} fixes $\Cs$ punctually, so it maps a real chord into itself. Therefore  $\Lc_1=\Lc^*_1$. By direct computation, $\EnG_1$, $\EnG_2$, $\EnG_3$-lines meet $\Ic$ in $1$, $2$, $0$ points, respectively. Therefore a matrix of the form \eqref{mat mprime q3} maps an  $\EnG_2$-line into an  $\EnG_2$-line, so $\Lc_2=\Lc^*_{12}$.
The matrix   $ \Mb''_1$ of the proof of Theorem \ref{th4:PlanePointOrbitG3} maps the $\Tr$-line $x_2 = 0$, $x_3 = 0$ into the $\UnG_2$-line $x_2 = 0$, $x_1 =  -x_3$, maps the $\Ar$-line $x_0 = 0$, $x_3 = 0$ into the $\EA_2$-line $x_0= 0$, $x_1 = - x_3$,
maps the $\UG$-line $x_1 = x_2$, $x_3 = 0$ into the $\UnG_1$-line $x_1 = -x_2$, $x_1 = - x_3$,
maps the $\EA_1$-line $x_2 = 0$, $x_0 = -x_3$, into the $\EnG_1$-line $x_0 = x_1+x_3$, $x_2 = 0$,
maps the $\EA_3$-line $x_0 = x_2$, $x_3 = 0$, into the $\EnG_3$-line $x_0 = x_2$, $x_1= - x_3$.
Finally, the matrix $\Mb''_2$ of the form \eqref{mat mprime q3} with $e=f=-1$, $g=1$, maps the $\Ar$-line $x_0=0$, $x_3=0$ into the $\IC$-line $x_1 +x_2= x_0$, $x_2 +x_3= x_1$.
By direct computation, no other line orbits merge.
\end{proof}

\begin{corollary}
Let $q=3$. The line orbits under $G_3\cong\mathbf{S}_4\mathbf{Z}_2^3$ are as follows.
\begin{align}
& \Lc_1=\{\RC\T{-lines}\}, ~\#\Lc_1=6; ~\Lc_2= \{\EnG_2\T{-lines}\}, ~\#\Lc_2=12;  \notag \db\\
& \Lc_3=\{\Tr\T{-lines}\} \cup  \{\UG_2\T{-lines}\},  ~\#\Lc_3=4+12=16; \notag \db\\
&\Lc_4=\{\IC\T{-lines}\} \cup  \{\Ar\T{-lines}\} \cup  \{\EA_2\T{-lines}\} ~\#\Lc_4=3+1+4=8; \notag \db\\
&\Lc_5=\{\UG\T{-lines}\} \cup  \{\UnG_1\T{-lines}\},  ~\#\Lc_5=12+12=24;\notag \db\\ &\Lc_6=\{\EA_1\T{-lines}\} \cup  \{\EnG_1\T{-lines}\}, ~\#\Lc_6=24+24=48;\notag\db\\
&\Lc_7=\{\EA_3\T{-lines}\} \cup  \{\EnG_3\T{-lines}\}, ~\#\Lc_7=4+12=16.\notag
\end{align}

\end{corollary}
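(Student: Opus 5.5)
The corollary is a restatement of Theorem \ref{th5:LineOrbitG3}, using the descriptions and sizes of the orbits $\Lc^*_i$ under the critical subgroup $G_3^*$ given in Theorem \ref{th5:Gq*q=3}(ii). I would prove it by substitution followed by a cardinality check.

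For each union in Theorem \ref{th5:LineOrbitG3}, I replace every $\Lc^*_i$ by its line type and size from Theorem \ref{th5:Gq*q=3}(ii):
\begin{align*}
&\Lc^*_1:\ \RC,\ 6; \quad \Lc^*_{12}:\ \EnG_2,\ 12; \quad \Lc^*_2:\ \Tr,\ 4; \quad \Lc^*_6:\ \UnG_2,\ 12;\db\\
&\Lc^*_3:\ \IC,\ 3; \quad \Lc^*_7:\ \Ar,\ 1; \quad \Lc^*_9:\ \EA_2,\ 4; \quad \Lc^*_4:\ \UG,\ 12;\db\\
&\Lc^*_5:\ \UnG_1,\ 12; \quad \Lc^*_8:\ \EA_1,\ 24; \quad \Lc^*_{11}:\ \EnG_1,\ 24; \quad \Lc^*_{10}:\ \EA_3,\ 4; \quad \Lc^*_{13}:\ \EnG_3,\ 12.
\end{align*}
The $G_3^*$-orbits are pairwise disjoint, so each size $\#\Lc_i$ is the sum of the sizes of its constituents. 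This gives $6$, $12$, $4+12=16$, $3+1+4=8$, $12+12=24$, $24+24=48$ and $4+12=16$.

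As a consistency check, these sizes sum to $6+12+16+8+24+48+16=130=\beta_{3,3}=(3^2+1)(3^2+3+1)$ by \eqref{eq2:theta-beta}, so the seven orbits partition all the lines of $\PG(3,3)$.

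There is no real obstacle. The only care needed is matching indices correctly. In particular, the $\UnG_2$-lines in $\Lc_3$ are $\Lc^*_6$, which the displayed statement writes as ``$\UG_2$''; this should be read as $\UnG_2$ for consistency with Theorem \ref{th5:LineOrbitG3}.
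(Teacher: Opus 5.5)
Your proposal is correct and is the paper's argument: the paper proves the corollary by saying it follows from Theorems \ref{th5:Gq*q=3} and \ref{th5:LineOrbitG3}, and your explicit substitution plus the check $6+12+16+8+24+48+16=130=\beta_{3,3}$ just spells that out. You are also right that the ``$\UG_2$'' in $\Lc_3$ is a typo for $\UnG_2$ (that is, $\Lc^*_6$), as Theorem \ref{th5:LineOrbitG3} and Tables \ref{tab:line-point q=3}, \ref{tab:line-plane q=3} confirm.
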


\begin{proof}
The assertions follow from Theorems \ref{th5:Gq*q=3}, \ref{th5:LineOrbitG3}.
\end{proof}

The above results for $q=3$ are illustrated by Tables \ref{tab:plane-point q=3}--\ref{tab:line-plane q=3}. The structure and notations of Tables \ref{tab:plane-point q=3}--\ref{tab:line-plane q=3} are the same described in Notation \ref{notat3:2} and at the end of Section  \ref{sec4:q=2}. In particular, the \emph{left parts} (columns $1,\ldots,6$) of Tables \ref{tab:plane-point q=3}--\ref{tab:line-plane q=3} are as follows. In the column $1$ and the top of the columns $2,\ldots,6$, see Notation \ref{notat3:2}(i), the entries correspond to Theorem \ref{th5:Gq*q=3}(ii). The order of orbits in the left parts is related to the union of the orbits under $G_3^*$ into the orbits under $G_3$.

\begin{table}[htbp]
\caption{Parameters of $\#\Ns_i^*\times\#\Ms_j^*$ submatrices $\Ic_{ij}^{\Pi\Pb*}$ and $\#\Ns_i\times\#\Ms_j$  submatrices $\Ic_{ij}^{\Pi\Pb}$ of the plane-point $40\times40$ incidence matrix $\Ic^{\Pi\Pb}$ in $\PG(3,q)$ for $q=3$ and orbits of planes $\Ns_i^*,\Ns_i$ and of points $\Ms_j^*,\Ms_j$ under $G_3^*$, $G_3$, respectively;  $4_\Gamma=(q+1)_\Gamma$}
 \centering
  \begin{tabular}
  {c|c|c|c||c|c||c||c|c|c|c}\hline
\multicolumn{6}{c||}{orbits under $G_3^*\cong PGL(2,3)$}&\multicolumn{5}{c}{orbits under $G_3\cong\mathbf{S}_4\mathbf{Z}_2^3$}\\\hline
$\Ns^*_i:\#\Ns^*_i$&$\Ms^*_4$  &$\Ms^*_1$&$\Ms^*_3$&$\Ms^*_2$&$\Ms^*_5$&$\Ns_i:\#\Ns_i$&$\Ms_4$&$\Ms_1$&$\Ms_3$&$\Ms_2$\\
$\pi$&RC&$\Cs$&TO&$4_\Gamma$&IC&$\pi$&RC&$\Cs$&TO&$4_\Gamma,\IC$\\
$\downarrow$       &12&4&8&4&12&$\downarrow$               &12&4&8&16\\\hline\hline

$\Ns^*_2: 12$      &3&2&4&1&3&$\Ns_2: 12$                  &3&2&4&4\\
 $2_\Cs$           &3&6&6&\tthr&\tthr&$2_\Cs$              &3&6&6&\tthr\\\hline

$\Ns^*_3: 4 $      &6&3&0&1&3&$\Ns_3: 4$                   &6&3&0&4\\
$3_\Cs$            &2&3&0&\tone&\tone&$3_\Cs$              &2&3&0&\tone\\\hline

$\Ns^*_5: 8$       &6&0&3&1&3&$\Ns_4: 8$                   &6&0&3&4\\
  $0_\Cs$          &4&0&3&\ttwo&\ttwo&$0_\Cs$              &4&0&3&\ttwo\\\hline\hline

$\Ns^*_1: 4$       &\bethr&\beo&\bet&\wf $\vphantom{H^{H^{H^H}}}$&\wthr&$\Ns_1: 16$       &\bethr&\beo&\bet&\wsv  \\
$\Gamma$           &1&1&1&\wf&\wo&$\Gamma,\overline{1_\Cs}$ &4&4&4&\wsv\\ \hline

$\Ns^*_4: 12$      &\bethr&\beo&\bet&\wo $\vphantom{H^{H^{H^H}}}$ &\ws &$4+12$&\multicolumn{3}{r}{\wf\pl\wthr\el\wo\pl\ws\el\wsv}&$\uparrow$\\
$\overline{1_\Cs}$ &3&3&3&\wthr&\ws&=16& \multicolumn{3}{r}{\wf\pl\wthr\el\wo\pl\ws\el\wsv}&\\\hline
 \end{tabular}
\label{tab:plane-point q=3}
\end{table}


\begin{table}[htbp]
\caption{Parameters of $\#\Lc_i^*\times\#\Ms_j^*$ submatrices $\Ic_{ij}^{\Lambda\Pb*}$ and $\#\Lc_i\times\#\Ms_j$ submatrices $\Ic_{ij}^{\Lambda\Pb}$ of the $130\times40$ line-point incidence matrix $\Ic^{\Lambda\Pb}$ in $\PG(3,q)$ for $q=3$ and orbits of lines $\Lc_i^*,\Lc_i$ and of points $\Ms_j^*,\Ms_j$ under $G_3^*$, $G_3$, respectively; $4_\Gamma=(q+1)_\Gamma$}
 \centering
  \begin{tabular} {c||c|c|c||c|c||c||c|c|c|c}\hline
\multicolumn{6}{c||}{orbits under $G_3^*\cong PGL(2,3)$}&\multicolumn{5}{c}{orbits under $G_3\cong\mathbf{S}_4\mathbf{Z}_2^3$}\\\hline
$\Lc^*_i:\#\Lc^*_i$&$\Ms^*_4$&$\Ms^*_1$&$\Ms^*_3$&$\Ms^*_2$&$\Ms^*_5$&$\Lc_i:\#\Lc_i$&$\Ms_4$&$\Ms_1$&$\Ms_3$&$\Ms_2$\\
$\lambda$ &RC&$\Cs$&TO&$4_\Gamma$&IC&$\lambda$&RC&$\Cs$&TO&$4_\Gamma,\IC$\\
$\downarrow$   &12&4&8&4&12&$\downarrow$   &12&4&8&16 \\\hline

$\Lc^*_1: 6$   &2&2&0&0&0   &$\Lc_1:6$        &2&2&0&0 \\
$\RC$          &1&3&0&\tz&\tz   &$\RC$        &1&3&0&\tz \\\hline

$\Lc^*_{12}:12$&2&0&2&0&0&$\Lc_2:12$          &2&0&2&0\\
$\EnG_2$       &2&0&3&\tz&\tz   &$\EnG_2$     &2&0&3&\tz\\\hline\hline

$\Lc^*_2: 4$   &\bez&\beo&\bet&\wo $\vphantom{H^{H^{H^H}}}$&\wz&$\Lc_3:16$  &\bez&\beo&\bet&\wo\\
$\Tr$          &0&1&1&\wo&\wz     &$\Tr,\UnG_2$&0&4&4&\wo\\\hline

$\Lc^*_6:12$   &\bez&\beo&\bet&\wz $\vphantom{H^{H^{H^H}}}$&\wo&$4+12$&\multicolumn{3}{r}{\wo\pl\wz\el\wz\pl\wo\el\wo}&$\uparrow$\\
$\UnG_2$       &0&3&3&\wz&\wo   &$=16$&\multicolumn{3}{r}{\wo\pl\wz\el\wz\pl\wo\el\wo}&\\\hline\hline

$\Lc^*_3: 3$   &\bez&\bez&\bez&\wz $\vphantom{H^{H^{H^H}}}$&\wf   &$\Lc_4:8$            &\bez&\bez&\bez&\wf \\
$\IC$          &0&0&0&\wz&\wo   &$\IC,\Ar,\EA_2$&0&0&0&\wt\\\hline

$\Lc^*_7: 1$   &\bez&\bez&\bez&\wf $\vphantom{H^{H^{H^H}}}$ &\wz      &$3+1$&\multicolumn{3}{r}{}&$\uparrow$\\
$\Ar$          &0&0&0&\wo&\wz   &$+4=8$&\multicolumn{4}{r}{\wz\pl\wf\el \wf\pl\wz\el\wo\pl\wthr\el\wf}\\\cline{1-6}

$\Lc^*_9:4$   &\bez&\bez&\bez&\wo $\vphantom{H^{H^{H^H}}}$ &\wthr      &&\multicolumn{4}{r}{\wz\pl\wo\pl\wo\el\wo\pl\wz\pl\wo\el\wt}\\
$\EA_2$       &0&0&0&\wo&\wo   &\\\hline\hline

$\Lc^*_4: 12$  &\beo&\beo&\bez&\wo $\vphantom{H^{H^{H^H}}}$ &\wo   &$\Lc_5:24$        &\beo&\beo&\bez&\wt\\
$\UG$          &1&3&0&\wthr&\wo   &$\UG,\UnG_1$      &2&6&0&\wthr\\\hline

$\Lc^*_5: 12$  &\beo&\beo&\bez&\wz $\vphantom{H^{H^{H^H}}}$&\wt   &$12+12$&\multicolumn{3}{r}{\wo\pl\wo\el\wz\pl\wt\el\wt}&$\uparrow$\\
$\UnG_1$       &1&3&0&\wz&\wt   &$=24$
&\multicolumn{3}{r}{\wthr\pl\wz\el\wo\pl\wt\el\wthr}&\\\hline\hline

$\Lc^*_8: 24$ &\beo&\bez&\beo&\wo $\vphantom{H^{H^{H^H}}}$ &\wo  &$\Lc_6:48$       &\beo&\bez&\beo&\wt\\
$\EA_1$       &2&0&3&\ws&\wt   &$\EA_1,\EnG_1$    &4&0&6&\ws\\\hline

$\Lc^*_{11}:24$&\beo&\bez&\beo&\wz $\vphantom{H^{H^{H^H}}}$&\wt&$24+24$&\multicolumn{3}{r}{\wo\pl\wo\el\wz\pl\wt\el\wt}&$\uparrow$\\
$\EnG_1$       &2&0&3&\wz&\wf   &$=48$&\multicolumn{3}{r}{\ws\pl\wz\el\wt\pl\wf\el\ws}&\\\hline\hline

$\Lc^*_{10}: 4$&\bethr&\bez&\bez&\wo $\vphantom{H^{H^{H^H}}}$ &\wz   &$\Lc_7:16$    &\bethr&\bez&\bez&\wo\\
$\EA_3$        &1&0&0&\wo&\wz   &$\EA_3,\EnG_3$    &4&0&0&\wo\\\hline

$\Lc^*_{13}:12$&\bethr&\bez&\bez&\wz $\vphantom{H^{H^{H^H}}}$&\wo   &$4+12$&\multicolumn{3}{r}{\wo\pl\wz\el\wz\pl\wo\el\wo}&$\uparrow$\\
$\EnG_3$       &3&0&0&\wz&\wo   &$=16$&\multicolumn{3}{r}{\wo\pl\wz\el\wz\pl\wo\el\wo}&\\\hline
  \end{tabular}
\label{tab:line-point q=3}
\end{table}


\begin{table}[htbp]
\caption{
Parameters of $\#\Lc_i^*\times\#\Ns_j^*$ submatrices $\Ic_{ij}^{\Lambda\Pi*}$ and $\#\Lc_i\times\#\Ns_j$ submatrices $\Ic_{ij}^{\Lambda\Pi}$ of the $130\times40$ line-point incidence matrix $\Ic^{\Lambda\Pi}$ in $\PG(3,q)$ for $q=3$ and orbits of lines $\Lc_i^*,\Lc_i$ and of planes $\Ns_j^*,\Ns_j$ under $G_3^*$, $G_3$, respectively}
 \centering
  \begin{tabular} {c||c|c|c||c|c||c||c|c||c||c}\hline
\multicolumn{6}{c||}{orbits under $G_3^*\cong PGL(2,3)$}&\multicolumn{5}{c}{orbits under $G_3\cong\mathbf{S}_4\mathbf{Z}_2^3$}\\\hline
$\Lc^*_i:\#\Lc^*_i$&$\Ns^*_2$&$\Ns^*_3$&$\Ns^*_5$&$\Ns^*_1$&$\Ns^*_4$&$\Lc_i:\#\Lc_i$&$\Ns_2$&$\Ns_3$&$\Ns_4$&$\Ns_1$\\
$\lambda$ &$2_\Cs$&$3_\Cs$&$0_\Cs$&$\Gamma$&$\overline{1_\Cs}$&$\lambda$&$2_\Cs$&$3_\Cs$&$0_\Cs$&$\Gamma,\overline{1_\Cs}$\\
$\downarrow$   &12&4&8&4&12&$\downarrow$&12&4&8&16\\\hline

$\Lc^*_1: 6$      &2&2&0&0&0     &$\Lc_1:6$&2&2&0&0\\
$\RC$             &1&3&0&\tz&\tz &$\RC$    &1&3&0&\tz\\\hline

$\Lc^*_{12}:12$   &2&0&2&0&0     &$\Lc_2:12$&2&0&2&0\\
$\EnG_2$          &2&0&3&\tz&\tz &$\EnG_2$  &2&0&3&\tz\\\hline\hline

$\Lc^*_2: 4$      &\bethr&\bez&\bez&\wo $\vphantom{H^{H^{H^H}}}$ &\wz &$\Lc_3:16$   &\bethr&\bez&\bez&\wo\\
$\Tr$             &1&0&0&\wo&\wz &$\Tr,\UnG_2$&4&0&0&\wo\\\hline

$\Lc^*_6:12$      &\bethr&\bez&\bez&\wz $\vphantom{H^{H^{H^H}}}$ &\wo&$4+12$&\multicolumn{3}{r}{\wo\pl\wz\el\wz\pl\wo\el\wo}&$\uparrow$\\
$\UnG_2$          &3&0&0&\wz&\wo&$=16$&
\multicolumn{3}{r}{\wo\pl\wz\el\wz\pl\wo\el\wo}&\\\hline\hline

$\Lc^*_3: 3$      &\bez&\bez&\bez&\wz $\vphantom{H^{H^{H^H}}}$ &\wf&$\Lc_4:8$            &\bez&\bez&\bez&\wf\\
$\IC$             &0&0&0&\wz&\wo &$\IC,\Ar,\EA_2$&0&0&0&\wt\\\hline

$\Lc^*_7: 1$      &\bez&\bez&\bez&\wf $\vphantom{H^{H^{H^H}}}$ &\wz&$3+1$&\multicolumn{3}{r}{}&$\uparrow$\\
$\Ar$             &0&0&0&\wo&\wz&$+4=8$&\multicolumn{4}{r}{\wz\pl\wf\el\wf\pl\wz\el \wo\pl\wthr\el\wf }\\\cline{1-6}

$\Lc^*_9:4$      &\bez&\bez&\bez&\wo $\vphantom{H^{H^{H^H}}}$ &\wthr&&
\multicolumn{4}{r}{\wz\pl\wo\pl\wo\el\wo\pl\wz\pl\wo\el\wt}\\
$\EA_2$          &0&0&0&\wo&\wo&\\\hline\hline

$\Lc^*_4: 12$     &\beo&\beo&\bez&\wo $\vphantom{H^{H^{H^H}}}$ &\wo &$\Lc_5:24$           &\beo&\beo&\bez&\wt\\
$\UG$             &1&3&0&\wthr&\wo &$\UG,\UnG_1$      &2&6&0&\wthr\\\hline

$\Lc^*_5: 12$     &\beo&\beo&\bez&\wz $\vphantom{H^{H^{H^H}}}$ &\wt&$12+12$&\multicolumn{3}{r}{\wo\pl\wo\el\wz\pl\wt\el\wt}&$\uparrow$\\
$\UnG_1$          &1&3&0&\wz&\wt&$=24$&\multicolumn{3}{r}{\wthr\pl\wz\el\wo\pl\wt\el\wthr}&\\\hline\hline

$\Lc^*_8:24$      &\beo&\bez&\beo&\wo $\vphantom{H^{H^{H^H}}}$ &\wo &$\Lc_6:48$           &\beo&\bez&\beo&\wt\\
 $\EA_1$          &2&0&3&\ws&\wt &$\EA_1,\EnG_1$    &4&0&6&\ws\\\hline

$\Lc^*_{11}:24$   &\beo&\bez&\beo&\wz $\vphantom{H^{H^{H^H}}}$ &\wt&$24+24$&\multicolumn{3}{r}{\wo\pl\wo\el\wz\pl\wt\el\wt}&$\uparrow$\\
$\EnG_1$          &2&0&3&\wz&\wf&$=48$&\multicolumn{3}{r}{\ws\pl\wz\el\wt\pl\wf\el\ws}&\\\hline\hline

$\Lc^*_{10}: 4$      &\bez&\beo&\bet&\wo $\vphantom{H^{H^{H^H}}}$ &\wz &$\Lc_7:16$           &\bez&\beo&\bet&\wo\\
$\EA_3$           &0&1&1&\wo&\wz &$\EA_3,\EnG_3$    &0&4&4&\wo\\\hline

$\Lc^*_{13}:12$   &\bez&\beo&\bet&\wz&\wo $\vphantom{H^{H^{H^H}}}$ &$4+12$&\multicolumn{3}{r}{\wo\pl\wz\el\wz\pl\wo\el\wo}&$\uparrow$\\
$\EnG_3$          &0&3&3&\wz&\wo&$=16$&\multicolumn{3}{r}{\wo\pl\wz\el\wz\pl\wo\el\wo}&\\\hline
\end{tabular}
\label{tab:line-plane q=3}
\end{table}

\section{Orbits and incidence matrices for $q=4$}\label{sec6:q=4}

Theorem \ref{th2:Gq q=234} states that $G_4\cong\mathbf{S}_5\cong  P\Gamma L(2,4)$. In the case $q = 4$, the five points of the twisted cubic $\Cs$ form a projective frame. Let $\xi$ be a primitive element of the field $\F_4$. The  matrix
  \begin{equation}\label{mat mprime q4}
 \Mb'''=\left[
 \begin{array}{cccc}
 1&0&0&0\\
 0&0&1&0\\
 0&1&0&0\\
 0&0&0&1
 \end{array}
  \right]
  \end{equation} swaps the points $P(\xi)$ and  $P(\xi^2)$ and fixes the other three points of $\Cs$. The matrix $\Mb'''$ is not of the form \ref{eq2:q ge5 matrixGq}.
 This explains the structure of $G_4\cong\mathbf{S}_5\cong  P\Gamma L(2,4)$: the points of $\Cs$ can be permuted by a matrix of the form \eqref{eq2:q ge5 matrixGq} or by $\Mb'''$.

\begin{theorem}\label{th6:Gq*q=4}
 Let $q=4$, $q\equiv 1\pmod 3$.

\textbf{\emph{(i)}} The group $G_4\cong\mathbf{S}_5\cong P\Gamma L(2,4)$ contains one subgroup isomorphic to $PGL(2,4)$. The matrices corresponding to the projectivities of this subgroup assume the form described by \eqref{eq2:q ge5 matrixGq}.  So, this subgroup (and only it) can be treated as a critical subgroup $G_4^*$.

\textbf{\emph{(ii)}} The plane, point, and line orbits $\Ns^*_i,\Ms^*_i$, and $\Lc^*_i$  under the critical subgroup $G_4^*$ are the same as the orbits $\Ns_i,\Ms_i$, and $\Lc_i$ in Theorems \emph{\ref{th2:Hirs}(ii)(iii)}, \emph{\ref{th2:line orbits q>=5}(i)}. We have,
\begin{align*}
&\Ns^*_1=\{\Gamma\T{-planes}\},~\#\Ns^*_1=5;~\Ns^*_{2}=\{2_\Cs\T{-planes}\},~\#\Ns^*_2=20;~\Ns^*_{3}=\{3_\Cs\T{-planes}\},\db\\
&\#\Ns^*_3=10;~ \Ns^*_{4}=\{\overline{1_\Cs}\T{-planes}\},~ \#\Ns^*_4=30;~\Ns^*_{5}=\{0_\Cs\T{-planes}\},~ \#\Ns^*_5=20.\db\\
&\Ms^*_1=\{\Cs\T{-points}\},~ \Ms^*_2=\{\Tr\T{-points}\},~ \Ms^*_3=\{3_\Gamma\T{-points}\},~ \Ms^*_4=\{1_\Gamma\T{-points}\},~\db\\
&\Ms^*_5=\{0_\Gamma\T{-points}\};~ \#\Ms^*_j=\#\Ns^*_j,~ j=1,~\ldots,~5.\\
&\Lc^*_1=\{\RC\T{-lines}\},~\Lc^*_2=\{\RA\T{-lines}\},~\#\Lc^*_1=\#\Lc^*_2=10;~\Lc^*_3=\{\Tr\T{-lines}\},~\#\Lc^*_3=5;\db \\
&\Lc^*_4=\{\IC\T{-lines}\},~ \Lc^*_5=\{\IA\T{-lines}\},~ \#\Lc^*_4=\#\Lc^*_5=6;\db\\
&\Lc^*_6=\{\UG_1\T{-lines}\},~ \#\Lc^*_6=5; ~ \Lc^*_7=\{\UG_2\T{-lines}\},~ \#\Lc^*_7=15; ~\Lc^*_8=\{\UnG\T{-lines}\}, \db\\
& \Lc^*_9=\{\EG\T{-lines}\}, ~\#\Lc^*_8= \#\Lc^*_9=60; ~\Lc^*_{10}=\{\EnG_1\T{-lines}\}, ~\Lc^*_{11}=\{\EnG_2\T{-lines}\},\db\\
&\Lc^*_{12}=\{\EnG_3\T{-lines}\},~ \#\Lc^*_{10}=\#\Lc^*_{11}=\#\Lc^*_{12}=20; ~\Lc^*_{11}=\{\EnG_2\T{-lines}\},\db\\
&\Lc^*_{12}=\{\EnG_3\T{-lines}\},~\Lc^*_{13}=\{\EnG_4\T{-lines}\},~\Lc^*_{14}=\{\EnG_5\T{-lines}\}, ~\Lc^*_{15}=\{\EnG_6\T{-lines}\},~\db\\
&\Lc^*_{16}=\{\EnG_7\T{-lines}\}, ~ \#\Lc^*_{10}=\#\Lc^*_{11}=\#\Lc^*_{12}=30.\end{align*}
\end{theorem}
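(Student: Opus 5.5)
We prove (i) by a structural argument about $G_4\cong\mathbf{S}_5$ and (ii) by invoking Lemma \ref{lem3:coincid}, with direct verification as a check.

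\textbf{Part (i).} First, the set $W$ of projectivities with matrices of the form \eqref{eq2:q ge5 matrixGq}, $a,b,c,d\in\F_4$, $ad-bc\ne0$, is a group. The map $\begin{pmatrix}a&c\\b&d\end{pmatrix}\mapsto\Mb$ is the action of $GL(2,4)$ on binary cubic forms, that is, the third symmetric power representation. It is therefore a homomorphism, and its kernel modulo scalars is trivial, so $W\cong PGL(2,4)$. Each $\Mb$ maps $P(t)$ to $P\big((at+b)/(ct+d)\big)$, hence $W\le G_4$.

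Next we show uniqueness. Since $\#PGL(2,4)=60$ and $\#G_4=120$, any subgroup isomorphic to $PGL(2,4)\cong\mathbf{A}_5$ has index $2$ in $G_4\cong\mathbf{S}_5$. An index-$2$ subgroup is normal and contains all squares, in particular all $3$-cycles. The $3$-cycles generate $\mathbf{A}_5$, so the subgroup equals $\mathbf{A}_5$, and it is unique. Consequently $G_4^*=W$ is the critical subgroup. We also note that $\Mb'''$ induces a transposition on the $5$ points of $\Cs$, which form a frame. Thus $\Mb'''\notin W$, consistent with $G_4=W\cup W\Mb'''$.

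\textbf{Part (ii).} Since $q=4\equiv1\pmod3$, Lemma \ref{lem3:coincid} applied to $G_4^*$ says that the plane, point and line orbits under $G_4^*$ coincide with those of Theorems \ref{th2:Hirs}(ii)(iii) and \ref{th2:line orbits q>=5}(i) with $\xi=1$, $q=4$. It then remains to substitute $q=4$ into the formulas.

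For planes this gives $5,20,10,30,20$, and the point orbits have the same sizes as the plane orbits. For lines it gives:
\begin{itemize}
\item $\#\Lc^*_1=\#\Lc^*_2=10$;
\item $\#\Lc^*_3=5$;
\item $\#\Lc^*_4=\#\Lc^*_5=6$;
\item $\#\Lc^*_6=5$ and $\#\Lc^*_7=15$;
\item $\#\Lc^*_8=\#\Lc^*_9=60$;
\item three $\EnG$ orbits of size $(q^3-q)/3=20$ and $2q-4=4$ orbits of size $(q^3-q)/2=30$.
\end{itemize}
This yields $2q+7+\xi=16$ orbits in total. As a consistency check, $10+10+5+6+6+5+15+60+60+60+120=357=\beta_{3,4}$.

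\textbf{Main obstacle.} Lemma \ref{lem3:coincid} is ``obvious'' only insofar as the proofs for $q\ge5$ use no more than the form \eqref{eq2:q ge5 matrixGq}. For $q=4$ the $\EnG$ classification of \cite{DMP_OrbLineMJOM} was partly computer-based. The safest route is therefore to confirm the orbit partition by direct computation (e.g.\ in Magma): generate $W$ from the matrices \eqref{eq2:q ge5 matrixGq}, compute the orbits of all $357$ lines, and match them to the types of Notation \ref{notation2:1} by counting their intersections with $\Cs$ and with $\Gamma$-planes.
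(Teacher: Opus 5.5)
Your proposal is correct. Part (ii) follows the paper's route: the paper's proof is just ``by direct computation'', and the substance of (ii) is Lemma~\ref{lem3:coincid} at $q=4$ (notably the split of the $\EnG$-lines into three orbits of size $20$ and four of size $30$). So your substitution into Theorems~\ref{th2:Hirs} and \ref{th2:line orbits q>=5}(i), backed by the Magma check you describe, is the same argument. Part (i) is where you genuinely differ. The paper again identifies the subgroup computationally, as it does for $q=2,3$, where it enumerates $8$ and $24$ copies of $PGL(2,q)$ in several conjugacy classes. You instead give a computer-free argument: the matrices \eqref{eq2:q ge5 matrixGq} form the image of the symmetric-cube representation, hence a subgroup $W\le G_4$ isomorphic to $PGL(2,4)$. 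Uniqueness then follows because $|G_4:W|=2$ and $\mathbf{S}_5$ has a unique index-$2$ subgroup. Your approach explains why $q=4$ behaves differently from $q=2,3$. There $|G_q:G_q^*|=8$, nothing forces normality, and $G_q^*$ does have several conjugates. Here the index-$2$ condition makes the critical subgroup automatically unique and normal. Your observation that $\Mb'''$ induces a transposition on the frame $\Cs$ also gives $G_4=W\cup W\Mb'''$ at no extra cost. The computational approach, in exchange, treats $q=2,3,4$ uniformly and verifies the orbit data of (ii) in the same computation.
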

 \begin{proof}
By direct computation.
\end{proof}

\begin{theorem}\label{th4:PlanePointOrbitG4}
Let $q=4$. The groups  $G_4\cong\mathbf{S}_5$ and $G^*_4$ have the same plane and point orbits.
\end{theorem}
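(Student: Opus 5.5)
Since $G_4^*$ is a subgroup of $G_4$ of index $2$ and $G_4=\langle G_4^*,\Mb'''\rangle$, every $G_4$-orbit is a union of $G_4^*$-orbits. So it suffices to show that the single extra generator $\Mb'''$ maps each $G_4^*$-orbit of points (and of planes) of Theorem \ref{th6:Gq*q=4}(ii) onto itself. Concretely, if $\Mb'''$ preserves every $G_4^*$-orbit, then every element of $G_4=G_4^*\cup G_4^*\Mb'''$ does too, and the orbits coincide.

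\textbf{Points.} I will not compute images orbit by orbit. Instead I will show that $\Mb'''$ preserves the geometric types defining the orbits $\Ms^*_1,\ldots,\Ms^*_5$, namely $\Cs$-, $\Tr$-, $3_\Gamma$-, $1_\Gamma$- and $0_\Gamma$-points. The map $\Mb'''$ is the coordinate swap $x_1\leftrightarrow x_2$, and $P(t)\mapsto \Pb(t^3,t,t^2,1)=P(t^2)$, since $t^4=t$ in $\F_4$. Hence $\Mb'''$ is the Frobenius $t\mapsto t^2$ on $\Cs$, and it fixes $\Cs$.

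Next I check how $\Mb'''$ acts on the osculating planes. In characteristic $2$ we have $\pi_\T{osc}(t)=\boldsymbol{\pi}(1,t,t^2,t^3)$. A plane $\boldsymbol{\pi}(c_0,c_1,c_2,c_3)$ is sent by the swap to $\boldsymbol{\pi}(c_0,c_2,c_1,c_3)$. Thus $\pi_\T{osc}(t)\mapsto\boldsymbol{\pi}(1,t^2,t,t^3)=\boldsymbol{\pi}(1,t^2,t^4,t^6)=\pi_\T{osc}(t^2)$, using $t^4=t$ and $t^6=t^3$. So $\Gamma$ is preserved, and $\pi_\T{osc}(\infty)$ is fixed.

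Tangent lines are determined by $\Cs$ and $\Gamma$ (the tangent at $P(t)$ lies in $\pi_\T{osc}(t)$), and every collineation fixing $\Cs$ permutes the tangents. Hence $\Tr$-points go to $\Tr$-points. Similarly, the number of osculating planes through a point is invariant, so the $\mu_\Gamma$-types are preserved. Therefore each $\Ms^*_j$ is $\Mb'''$-invariant.

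\textbf{Planes.} The plane types $\Gamma$, $\overline{1_\Cs}$, $0_\Cs$, $2_\Cs$, $3_\Cs$ are defined only by the number of points of $\Cs$ a plane contains and by membership in $\Gamma$. Both are preserved by $\Mb'''$, by the previous step. Alternatively, Theorem \ref{th2:plane=point} shows that $G_4$ has $5$ plane orbits once we know it has $5$ point orbits. Since each $G_4$-orbit is a union of the five $G_4^*$-orbits $\Ns^*_i$, the two partitions must coincide.

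\textbf{Main obstacle.} The only point needing care is that the orbit descriptions in Theorem \ref{th2:Hirs}(ii),(iii) are purely geometric, i.e. preserved by every collineation fixing $\Cs$ and $\Gamma$. Otherwise a finer, non-geometric splitting could be merged by $\Mb'''$, as happens for lines. By Lemma \ref{lem3:coincid} the $G_4^*$-orbits are exactly these type classes, so the argument goes through. As a sanity check, I would verify one orbit directly: $\Mb'''$ maps the $3_\Gamma$-point $\Pb(0,1,0,0)$ to $\Pb(0,0,1,0)$. This point must again be a $3_\Gamma$-point, which can be confirmed by counting solutions $t$ of $t^2=0$-type equations from $\pi_\T{osc}(t)$.
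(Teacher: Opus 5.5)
Your reduction is sound: $G_4^*$ has index $2$ and $\Mb'''\notin G_4^*$, so it suffices that $\Mb'''$ preserves each $G_4^*$-orbit. Your computations that $\Mb'''$ (the swap $x_1\leftrightarrow x_2$) acts on $\Cs$ and on $\Gamma$ as $t\mapsto t^2$ are correct, and so is the plane part. The gap is the step for $\Tr$-points.

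The claim that every collineation fixing $\Cs$ permutes the tangents is false for $q=4$, and $\Mb'''$ itself is a counterexample. It sends the tangent $x_2=x_3=0$ at $P(\infty)$ (the points $\Pb(a,b,0,0)$) to the line $x_1=x_3=0$. That line passes through $P(\infty)$ and lies in $\pi_{\T{osc}}(\infty)$, but it is a $\UG_1$-line, not the tangent. This is exactly the merging $\Lc_9=\Lc^*_3\cup\Lc^*_6$ in Theorem \ref{th5:LineOrbitG4}. Knowing that the tangent at $P(t)$ is a line of $\pi_{\T{osc}}(t)$ through $P(t)$ leaves $q+1$ candidates. Moreover, since $\Mb'''$ preserves both $\Cs$ and $\Gamma$ yet moves a tangent, no characterization of tangents by the sets $\Cs$ and $\Gamma$ alone can exist.

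Your ``main obstacle'' paragraph has the same flaw. Line orbits merge not only across non-geometric splittings ($\EnG_2,\EnG_3$) but also across the geometric types $\Tr$ and $\UG_1$. Your sanity check is also mislabeled. $\Pb(0,1,0,0)$ lies only on $\pi_{\T{osc}}(0)$ and $\pi_{\T{osc}}(\infty)$ and is on the tangent at $P(\infty)$, so it is a $\Tr$-point. Its image $\Pb(0,0,1,0)$, on the tangent at $P(0)$, is one as well. A genuine $3_\Gamma$-point is, e.g., $\Pb(1,0,0,1)$, which lies on $\pi_{\T{osc}}(t)$ exactly for the three $t$ with $t^3=1$.

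The conclusion survives if you avoid tangent lines altogether. Since $\Mb'''$ preserves $\Cs$ and permutes $\Gamma$, it preserves the number of $\Gamma$-planes through each point off $\Cs$. Hence $\Ms^*_1,\Ms^*_3,\Ms^*_4,\Ms^*_5$ are invariant, and so is their complement $\Ms^*_2$. Equivalently, $\Tr$-points are exactly the points off $\Cs$ on exactly two $\Gamma$-planes. For instance, $\Pb(a,1,0,0)$ lies only on $\pi_{\T{osc}}(a)$ and $\pi_{\T{osc}}(\infty)$; this is the bottom entry $2$ in the cell $(\Ns^*_1,\Ms^*_2)$ of Table \ref{tab:plane-point q=4}.

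With this repair your argument is a valid, more conceptual alternative to the paper's. The paper checks the point orbits by direct computation and then obtains the plane orbits from Theorem \ref{th2:plane=point}, as in your second plane argument. Your version explains why points and planes do not merge: their $G_4^*$-orbits are determined by incidence with the sets $\Cs$ and $\Gamma$, which $\Mb'''$ respects. Lines do merge because tangency is not such an invariant.
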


\begin{proof}
By direct computation, no point is mapped by the matrix $\Mb'''$ into a point of another orbit.
By Theorem \ref{th2:plane=point}, $G_4$ has five orbits of planes, too.
\end{proof}

\begin{theorem}\label{th5:LineOrbitG4}
Let $q=4$. Under $G_4\cong\mathbf{S}_5$, there are $12$ line orbits $\Lc_i$ such that
\begin{align*}
&\Lc_1=\Lc^*_1,
~\Lc_2=\Lc^*_{2},
~\Lc_3=\Lc^*_7,
~\Lc_4=\Lc^*_ 8,
~\Lc_5=\Lc^*_9 ,
~\Lc_6=\Lc^*_{10},
~\Lc_7=\Lc^*_{13},\db\\
&\Lc_8=\Lc^*_{14},
~\Lc_9=\Lc^*_{3} \cup \Lc^*_{6},
~\Lc_{10}=\Lc^*_{4} \cup \Lc^*_{5},
~\Lc_{11}=\Lc^*_{11} \cup \Lc^*_{12},
~\Lc_{12}=\Lc^*_{15} \cup \Lc^*_{16},
\end{align*}

\end{theorem}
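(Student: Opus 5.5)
Since $G^*_4$ has index $2$ in $G_4$, it is normal, and $G_4=\langle G_4^*,\Mb'''\rangle$. So every $G_4$-orbit of lines is either a single $G_4^*$-orbit $\Lc^*_i$ or the union of exactly two $G_4^*$-orbits $\Lc^*_i\cup\Lc^*_i\Mb'''$ of equal size. The plan is to determine the involution $\Lc^*_i\mapsto\Lc^*_i\Mb'''$ on the $16$ line orbits of Theorem \ref{th6:Gq*q=4}(ii). Concretely, for each $i$ we take one representative line $\ell_i\in\Lc^*_i$ and identify the $G_4^*$-orbit of $\ell_i\Mb'''$. We do this using invariants of $G_4^*$-orbits: the type $\lambda$, together with the row of top entries $t^{\Lambda\Pb*}_{ij}$ and $t^{\Lambda\Pi*}_{ij}$ of Tables for $q=4$ (numbers of points of each $\Ms^*_j$ on the line and planes of each $\Ns^*_j$ through it). By Theorem \ref{th4:PlanePointOrbitG4}, $\Mb'''$ preserves every $\Ms^*_j$ and $\Ns^*_j$, so these rows are $G_4$-invariants. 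Two $G^*_4$-orbits with different rows (or different sizes) therefore cannot merge.

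\textbf{First step: exclude merges by the invariants.} The sizes already separate most orbits, since $\#\Lc^*_i\in\{5,6,10,15,20,30,60\}$. $\Lc^*_7$ (size $15$) is then alone. $\Lc^*_8,\Lc^*_9$ (size $60$) are separated because $\UnG$-lines meet $\Cs$ and $\EG$-lines do not. Likewise the sizes separate $\Lc^*_1,\Lc^*_2$ (size $10$) from everything else, and $\RC$-lines meet $\Cs$ in two points while $\RA$-lines do not, so $\Lc_1,\Lc_2$ are single orbits. Among the size-$20$ orbits $\Lc^*_{10},\Lc^*_{11},\Lc^*_{12}$ and the size-$30$ orbits $\Lc^*_{13},\ldots,\Lc^*_{16}$, we compare the known incidence rows. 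We expect $\Lc^*_{10}$ to differ from $\Lc^*_{11},\Lc^*_{12}$, and $\Lc^*_{13},\Lc^*_{14}$ to differ from each other and from $\Lc^*_{15},\Lc^*_{16}$. This leaves only the candidate pairs $\{3,6\}$, $\{4,5\}$, $\{11,12\}$, $\{15,16\}$, whose rows must coincide, as the bar/tilde/hat conditions described at the end of Section \ref{sec4:q=2} require. If some invariant fails to separate an orbit claimed to be fixed, we instead compute directly, by applying $\Mb'''$ to a representative, that it stays in the same $G_4^*$-orbit.

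\textbf{Second step: exhibit the merges.} For each candidate pair we give an explicit line of the first orbit whose image under $\Mb'''$ (which swaps $x_1,x_2$) lies in the second. For example, $\Mb'''$ fixes $P(0),P(1),P(\infty)$ and swaps $P(\xi)\leftrightarrow P(\xi^2)$. So the tangent at $P(0)$, the line $x_0=x_1=0$, goes to the line $x_0=x_2=0$. This image lies in $\pi_{\rm osc}(\infty)=\boldsymbol{\pi}(0,0,0,1)$? No: it lies in $\pi_{\rm osc}(0)$, the plane $x_0=0$ (characteristic $2$), and passes through $P(0)$. It is not the tangent there, so it is a $\UG_1$-line, which gives $\Lc^*_3\cup\Lc^*_6$. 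Similarly, the pair of $\IC$-lines and $\IA$-lines (six each) should be interchanged, since $\Mb'''$ acts as a non-linear (Frobenius-type) symmetry on $\PG(1,4)$ and exchanges the roles of $\Cs$ and $\Gamma$ for the conjugate pairs. For $\{\EnG_2,\EnG_3\}$ and $\{\EnG_6,\EnG_7\}$ we pick representatives, apply $\Mb'''$, and identify the orbit of the image via its incidence row or a Magma orbit computation.

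\textbf{Main obstacle.} The main difficulty is the $\EnG$-orbits $\Lc^*_{10},\ldots,\Lc^*_{16}$. Their incidence rows may coincide for orbits that are in fact not merged, so the invariants alone may not decide whether, say, $\Lc^*_{13}$ is fixed or swapped with $\Lc^*_{14}$. Here we first try finer invariants: the numbers of points on the line of each type broken down by which osculating planes contain them, or the stabilizer order in $G_4^*$. Failing that, we fall back on direct computation of $\ell\Mb'''$ for a representative $\ell$ and of its full $G_4^*$-orbit, as in the proofs of Theorems \ref{th4:LineOrbitG2} and \ref{th5:LineOrbitG3}. Finally we count: $16$ orbits with $4$ merged pairs give $12$ orbits, and the orbit sizes sum to $\beta_{3,4}=357$.
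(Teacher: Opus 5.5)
Your plan follows the paper's: exhibit the merges as images under $\Mb'''$, then exclude all other merges, which the paper does only ``by direct computation''. Two of your ideas sharpen it. The index-$2$ remark, and the use of the $G_4^*$-table rows as $G_4$-invariants (legitimate by Theorem~\ref{th4:PlanePointOrbitG4}), show without computation that $\Lc^*_1,\Lc^*_2,\Lc^*_7,\Lc^*_8,\Lc^*_9,\Lc^*_{10}$ are not merged. Your tangent-to-$\UG_1$ witness is correct once you add that the image of a size-$5$ orbit has size $5$, which excludes $\UG_2$.

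However, your argument breaks down at the three decisive points.

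(i) Your expectation that invariants separate $\Lc^*_{13}$ and $\Lc^*_{14}$ is false. Both have size $30$ and the identical row $(0,0,1,2,2)$ in both the line-point and the line-plane table for $q=4$. So five candidate pairs survive your first step, and the one non-merge that is not automatic is left to an unspecified computation, as in the paper.

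(ii) Your reason for the $\IC$/$\IA$ merge is wrong. $\Mb'''$ maps $\Cs$ to $\Cs$ and $\pi_{\mathrm{osc}}(t)$ to $\pi_{\mathrm{osc}}(t^2)$, so it exchanges nothing between $\Cs$ and $\Gamma$. The real mechanism is that $\Mb'''$ does not preserve the cubic over $\F_{16}$: $P(\tau)\Mb'''=\Pb(\tau^3,\tau,\tau^2,1)$ lies on it only for $\tau\in\F_4$. You therefore need an explicit witness, such as the paper's imaginary chord $x_0+x_1=\xi x_3$, $x_0+\xi^2x_1=x_2$. It passes through $P(\tau),P(\tau^4)$ with $\tau^2+\xi^2\tau+1=0$, and its image is $\pi_{\mathrm{osc}}(\sigma)\cap\pi_{\mathrm{osc}}(\sigma^4)$ with $\sigma^2+\xi\sigma+1=0$.

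(iii) ``Identify the orbit of the image via its incidence row'' cannot decide anything inside $\{\Lc^*_{11},\Lc^*_{12}\}$ or $\{\Lc^*_{15},\Lc^*_{16}\}$, because their rows coincide.

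All three points can be closed without any orbit computation.

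First, $\Mb'''$ is the elation with axis the plane $x_1=x_2$ and centre $\Pb(0,1,1,0)$. That plane is the $3_\Cs$-plane through $P(0),P(1),P(\infty)$, and $\Mb'''$ fixes all $21$ of its lines. $G_4^*$ is transitive on $3_\Cs$-planes, and $t^{\Lambda\Pi*}_{i3}\neq0$ for $i=1,2,7,8,10,13,14$. Hence each of these orbits contains a line fixed by $\Mb'''$ and is $\Mb'''$-stable. Then $\Lc^*_9$ is stable too, since its image is a size-$60$ orbit other than $\Lc^*_8$.

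Second, count the stable orbits. The number of $\Mb'''$-stable $G_4^*$-orbits equals $\frac{1}{60}\sum_{g\in G_4\setminus G_4^*}\mathrm{fix}(g)$, where $\mathrm{fix}(g)$ is the number of lines fixed by $g$. Since $\Cs$ is a frame, $G_4\setminus G_4^*$ consists of the odd permutations of $\Cs$:
\begin{itemize}
\item $10$ transpositions, which are elations with $21+21-5=37$ fixed lines;
\item $30$ four-cycles, which are unipotent with a single Jordan block and have $1$ fixed line;
\item $20$ elements of type $(3)(2)$, with $4$ fixed lines.
\end{itemize}
This gives $(370+30+80)/60=8$. So exactly the eight orbits above are stable, and the remaining eight are swapped in pairs. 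By size and rows these pairs must be $\{\Lc^*_3,\Lc^*_6\}$, $\{\Lc^*_4,\Lc^*_5\}$, $\{\Lc^*_{11},\Lc^*_{12}\}$, $\{\Lc^*_{15},\Lc^*_{16}\}$. As a check, the same count over $G_4^*$, with $357,21,12,2$ fixed lines for elements of order $1,2,3,5$, gives $960/60=16$.
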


\begin{proof}
The matrix $ \Mb'''$  permutes the points of $\Cs$, so it maps a real chord into itself. The matrix   $ \Mb'''$  maps the $\Tr$-line $x_2 = 0$, $x_3 = 0$ into the $\UG_1$-line $x_1 = 0$, $x_3 = 0$,
maps the $\IC$-line $x_0 + x_1 = \xi x_3$, $x_0 + \xi^2 x_1  =  x_2  $ into the $\IA$-line $x_0 + x_2 = \xi x_3$,  $x_0 +  x_1=  \xi^2 x_2 $,
maps the $\EnG_2$-line $x_2 = 0$, $ x_0 = \xi x_3$ into the $\EnG_3$-line $x_1 = 0$, $ x_0 = \xi x_3$,
maps the $\EnG_6$-line $x_0 = x_2$, $x_0 + \xi^2 x_3$, into the $\EnG_7$-line $x_0 = x_1$, $x_0 + \xi^2 x_3$.
By direct computation, no other line orbits merge.
 \end{proof}

\begin{corollary}
Let $q=4$. The line orbits under $G_4\cong\mathbf{S}_5$ are as follows.
\begin{align}
& \Lc_1=\{\RC\T{-lines}\}, ~\#\Lc_1=10;
~\Lc_2= \{\RA\T{-lines}\}, ~\#\Lc_2=10;   ~\Lc_3=\{\UG_2\T{-lines}\},\notag \db\\
&\#\Lc_3=15; ~\Lc_4=\{\UnG\T{-lines}\}, ~\#\Lc_4=60;  ~\Lc_5=\{\EG\T{-lines}\}, ~\#\Lc_5=60;\notag \db\\
&\Lc_6=\{\EnG_1\T{-lines}\}, ~\#\Lc_6=20; ~\Lc_7=\{\EnG_4\T{-lines}\}, ~\#\Lc_7=30; ~\Lc_8= \{\EnG_5\T{-lines}\}, \notag \db\\
&\#\Lc_8=30;  ~\Lc_9=\{\Tr\T{-lines}\} \cup  \{\UG_1\T{-lines}\}, ~\#\Lc_{9}=10;
~\Lc_{10}=\{\IC\T{-lines}\} \cup  \{\IA\T{-lines}\},  \notag \db\\
&\#\Lc_{10}=12; ~\Lc_{11}=\{\EnG_2\T{-lines}\} \cup  \{\EnG_3\T{-lines}\}, ~\#\Lc_{11}=40;   \notag \db\\
&~\Lc_{12}= \{\EnG_6\T{-lines}\} \cup  \{\EnG_7\T{-lines}\}, ~\#\Lc_{12}=60.
\end{align}

\end{corollary}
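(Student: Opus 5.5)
The plan is to deduce the corollary from Theorems \ref{th6:Gq*q=4}(ii) and \ref{th5:LineOrbitG4}, in the same way as the corresponding corollaries for $q=2,3$.

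First, Theorem \ref{th5:LineOrbitG4} gives each $G_4$-orbit $\Lc_i$, $i=1,\ldots,12$, as a union of $G_4^*$-orbits. For every constituent $\Lc^*_j$, Theorem \ref{th6:Gq*q=4}(ii) (equivalently Lemma \ref{lem3:coincid} with Theorem \ref{th2:line orbits q>=5}(i) for $q=4$, $\xi=1$) supplies its line type $\lambda\in\Lk^{(1)}$ and its size. Substituting these gives the types of the lines in each $\Lc_i$. Since the $\Lc^*_j$ are pairwise disjoint, the sizes are obtained by addition.

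The only real care is with the sizes of the $\EnG$ orbits, because the displayed list in Theorem \ref{th6:Gq*q=4}(ii) contains a typographical repetition. I would read these sizes off Theorem \ref{th2:line orbits q>=5}(i) with $q=4$, $\xi=1$:
\begin{itemize}
\item there are $2+\xi=3$ orbits $\EnG_1,\EnG_2,\EnG_3$, each of size $(q^3-q)/3=20$;
\item there are $2q-4=4$ orbits $\EnG_4,\ldots,\EnG_7$, each of size $(q^3-q)/2=30$.
\end{itemize}
It follows that $\#\Lc_{11}=20+20=40$ and $\#\Lc_{12}=30+30=60$.

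The remaining unions have these sizes:
\begin{itemize}
\item $\#\Lc_9=5+5=10$, from $\Tr$ and $\UG_1$;
\item $\#\Lc_{10}=6+6=12$, from $\IC$ and $\IA$.
\end{itemize}
The singleton unions keep their $G_4^*$ sizes: $10,10,15,60,60,20,30,30$.

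As a consistency check, the total is
\[
10+10+15+60+60+20+30+30+10+12+40+60=357=\beta_{3,4}=17\cdot 21.
\]
This confirms that the merged orbits account for all lines of $\PG(3,4)$.
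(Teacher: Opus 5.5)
Your proposal is correct and matches the paper's proof, which simply reads the corollary off Theorems \ref{th6:Gq*q=4} and \ref{th5:LineOrbitG4} by substituting the types and sizes of the constituent $G_4^*$-orbits and adding them. Your correction of the misprinted sizes of $\Lc^*_{13},\ldots,\Lc^*_{16}$ is sound: via Theorem \ref{th2:line orbits q>=5}(i) with $q=4$, $\xi=1$, each of these has size $30$. Your check that the twelve orbit sizes sum to $\beta_{3,4}=357$ is also a sound addition.
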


\begin{proof}
The assertions follow from Theorems \ref{th6:Gq*q=4}, \ref{th5:LineOrbitG4}.
\end{proof}

The above results for $q=4$ are illustrated by Tables \ref{tab:plane-point q=4}--\ref{tab:line-plane q=4}. The structure and notations of Tables \ref{tab:plane-point q=4}--\ref{tab:line-plane q=4} are the same described in Notation \ref{notat3:2} and at the end of Section~\ref{sec4:q=2}. In particular,
the \emph{left parts} (columns $1,\ldots,6$) of Tables \ref{tab:plane-point q=3}--\ref{tab:line-plane q=3} are as follows.
In the column $1$ and the top of the columns $2,\ldots,6$, see Notation \ref{notat3:2}(i), the entries correspond to Theorem \ref{th6:Gq*q=4}(ii). The order of orbits in the left parts is related to the union of the orbits under $G_4^*$ into the orbits under $G_4$.

\begin{table}[htbp]
\caption{Parameters of $\#\Ns_i^*\times\#\Ms_j^*$ submatrices $\Ic_{ij}^{\Pi\Pb*}$ and $\#\Ns_i\times\#\Ms_j$  submatrices $\Ic_{ij}^{\Pi\Pb}$ of the plane-point $85\times85$ incidence matrix $\Ic^{\Pi\Pb}$ in $\PG(3,q)$ for $q=4$ and orbits of planes $\Ns_i^*,\Ns_i$ and of points $\Ms_j^*,\Ms_j$ under $G_4^*\cong PGL(2,4)$, $G_4\cong\mathbf{S}_5\cong P\Gamma L(2,4)$}
 \centering
  \begin{tabular}
  {c|c|c|c|c|c}\hline
$\Ns^*_i=\Ns_i:\#\Ns_i^*$  &$\Ms^*_1=\Ms_1$&$\Ms^*_2=\Ms_2$&$\Ms^*_3=\Ms_3$&$\Ms^*_4=\Ms_4$&$\Ms^*_5=\Ms_5$ \\
$\pi$&$\Cs$&T&$3_\Gamma$&$1_\Gamma$&$0_\Gamma$  \\
$\downarrow$         &5&20&10&30&20      \\\hline\hline

$\Ns^*_1=\Ns_1$: 5&1&8&6&6&0 \\
$\Gamma$&1&2&3&1&0\\ \hline

$\Ns_2^*=\Ns_2$: 20&2&7&1&6&5\\
$2_\Cs$ &8&7&2&4&5\\\hline

$\Ns_3^*=\Ns_3$: 10&3&2&4&6&6\\
$3_\Cs$&6&1&4&2&3\\\hline

$\Ns_4^*=\Ns_4$: 30&1&4&2&10&4\\
$\overline{1_\Cs}$&6&6&6&10&6\\\hline

$\Ns_5^*=\Ns_5$: 20&0&5&3&6&7\\
$0_\Cs$&0&5&6&4&7\\\hline
 \end{tabular}
\label{tab:plane-point q=4}
\end{table}

\begin{table}[htbp]
\caption{Parameters of $\#\Lc^*_i\times\#\Ms_j^*$ submatrices $\Ic_{ij}^{\Lambda\Pb*}$  and $\#\Lc_i\times\#\Ms_j$
submatrices $\Ic_{ij}^{\Lambda\Pb}$ of the line-point $357\times85$ incidence matrix $\Ic^{\Lambda\Pb}$ in $\PG(3,q)$ for $q=4$ and orbits of lines $\Lc^*_i$, $\Lc_i$ and points $\Ms_j^*$, $\Ms_j$, under $G^*_4$, $G_4$, respectively; $\Ms_j^*=\Ms_j$}
 \centering
 \renewcommand\arraystretch{0.93}
  \begin{tabular}
  {c||c|c|c|c|c||c||c|c|c|c|c}\hline
\multicolumn{6}{c||}{orbits under $G_4^*\cong PGL(2,4)$}&
\multicolumn{6}{c}{orbits under $G_4\cong\mathbf{S}_5\cong P\Gamma L(2,4)$}\\\hline
$\Lc^*_i:\#\Lc^*_i$&$\Ms_1^*$&$\Ms_2^*$&$\Ms_3^*$&$\Ms_4^*$&$\Ms_5^*$&$\Lc_i:\#\Lc_i$&$\Ms_1$&$\Ms_2$&$\Ms_3$&$\Ms_4$&$\Ms_5$\\
$\lambda$&$\Cs$&T&$3_\Gamma$&$1_\Gamma$&$0_\Gamma$&$\lambda$&$\Cs$&T&$3_\Gamma$&$1_\Gamma$&$0_\Gamma$\\
$\downarrow$        &5&20&10&30&20&$\downarrow$        &5&20&10&30&20      \\
\hline\hline

$\Lc^*_1:10$   &2&0&1&0&2  &$\Lc_1:10$   &2&0&1&0&2  \\
$\RC$          &4&0&1&0&1  &$\RC$        &4&0&1&0&1  \\\hline

$\Lc^*_2:10$   &0&2&3&0&0  &$\Lc_2:10$   &0&2&3&0&0  \\
$\RA$          &0&1&3&0&0  &$\RA$        &0&1&3&0&0  \\ \hline

$\Lc^*_7: 15$  &1&0&2&2&0  &$\Lc_3: 15$  &1&0&2&2&0   \\
$\UG_2$        &3&0&3&1&0  &$\UG_2$      &3&0&3&1&0   \\ \hline

$\Lc^*_8:60$   &1&1&0&2&1  &$\Lc_4:60$   &1&1&0&2&1   \\
$\UnG$        &12&3&0&4&3  &$\UnG$      &12&3&0&4&3  \\ \hline

$\Lc^*_9: 60$  &0&2&1&2&0  &$\Lc_5: 60$  &0&2&1&2&0  \\
$\EG$          &0&6&6&4&0  &$\EG$        &0&6&6&4&0  \\  \hline

$\Lc^*_{10}:20$&0&1&1&0&3  &$\Lc_6:20$   &0&1&1&0&3   \\
$\EnG_1$       &0&1&2&0&3  &$\EnG_1$     &0&1&2&0&3   \\\hline

$\Lc^*_{13}:30$&0&0&1&2&2  &$\Lc_7: 30$  &0&0&1&2&2  \\
$\EnG_4$       &0&0&3&2&3  &$\EnG_4$     &0&0&3&2&3  \\\hline

$\Lc^*_{14}:30$&0&0&1&2&2  &$\Lc_8: 30$  &0&0&1&2&2  \\
$\EnG_5$       &0&0&3&2&3  &$\EnG_5$     &0&0&3&2&3  \\\hline\hline

$\Lc^*_3: 5$   &$\vphantom{H^{H^{H^H}}}$\beo&\bef&\bez&\bez&\bez  &$\Lc_9:10$    &\beo&\bef&\bez&\bez&\bez  \\
$\Tr$          &1&1&0&0&0  &$\Tr,\UG_1$&2&2&0&0&0  \\\hline

$\Lc^*_6:5$    &$\vphantom{H^{H^{H^H}}}$\beo&\bef&\bez&\bez&\bez  &$5+5=10$  \\
$\UG_1$        &1&1&0&0&0  &  \\ \hline\hline

$\Lc^*_4:6$    &$\vphantom{H^{H^{H^H}}}$\bez&\bez&\bez&\befv&\bez  &$\Lc_{10}:12$ &\bez&\bez&\bez&\befv&\bez  \\
$\IC$          &0&0&0&1&0  &$\IC,\IA$  &0&0&0&2&0  \\\hline

$\Lc^*_5:6$    &$\vphantom{H^{H^{H^H}}}$\bez&\bez&\bez&\befv&\bez  &$6+6=12$  \\
$\IA$          &0&0&0&1&0  &  \\\hline\hline

$\Lc^*_{11}:20$&$\vphantom{H^{H^{H^H}}}$\bez&\beo&\bez&\bethr&\beo  &$\Lc_{11}: 40$    &\bez&\beo&\bez&\bethr&\beo  \\
$\EnG_2$       &0&1&0&2&1  &$\EnG_2,\EnG_3$&0&2&0&4&2  \\\hline

$\Lc^*_{12}:20$&$\vphantom{H^{H^{H^H}}}$\bez&\beo&\bez&\bethr&\beo  &$20+20$  \\
$\EnG_3$       &0&1&0&2&1  &$=40$  \\\hline\hline

$\Lc^*_{15}:30$&$\vphantom{H^{H^{H^H}}}$\bez&\bet&\bez&\beo&\bet  &$\Lc_{12}: 60$    &\bez&\bet&\bez&\beo&\bet  \\
$\EnG_6$       &0&3&0&1&3  &$\EnG_6,\EnG_7$&0&6&0&2&6  \\\hline

$\Lc^*_{16}:30$&$\vphantom{H^{H^{H^H}}}$\bez&\bet&\bez&\beo&\bet  &$30+30$  \\
$\EnG_7$       &0&3&0&1&3  &$=60$  \\\hline
 \end{tabular}
\label{tab:line-point q=4}
\end{table}

\begin{table}[htbp]
\caption{Parameters of $\#\Lc^*_i\times\#\Ns_j^*$ submatrices $\Ic_{ij}^{\Lambda\Pi*}$  and $\#\Lc_i\times\#\Ns_j$
submatrices $\Ic_{ij}^{\Lambda\Pi}$ of the line-plane $357\times85$ incidence matrix $\Ic^{\Lambda\Pi}$ in $\PG(3,q)$ for $q=4$ and orbits of lines $\Lc^*_i$, $\Lc_i$ and planes $\Ns_j^*$, $\Ns_j$, under $G^*_4$, $G_4$, respectively; $\Ns_j^*=\Ns_j$}
 \centering
  \renewcommand\arraystretch{0.93}
  \begin{tabular}
  {c||c|c|c|c|c||c||c|c|c|c|c}\hline
\multicolumn{6}{c||}{orbits under $G_4^*\cong PGL(2,4)$}&
\multicolumn{6}{c}{orbits under $G_4\cong\mathbf{S}_5\cong P\Gamma L(2,4)$}\\\hline
$\Lc^*_i:\#\Lc^*_i$&$\Ns_1^*$&$\Ns_2^*$&$\Ns_3^*$&$\Ns_4^*$&$\Ns_5^*$&$\Lc_i:\#\Lc_i$&$\Ns_1$&$\Ns_2$&$\Ns_3$&$\Ns_4$&$\Ns_5$\\
$\lambda$&$\Cs$&T&$3_\Gamma$&$1_\Gamma$&$0_\Gamma$&$\lambda$&$\Cs$&T&$3_\Gamma$&$1_\Gamma$&$0_\Gamma$\\
$\downarrow$        &5&20&10&30&20&$\downarrow$        &5&20&10&30&20      \\\hline\hline

$\Lc^*_1:10$     &0&2&3&0&0&$\Lc_1:10$ &0&2&3&0&0    \\
$\RC$            &0&1&3&0&0&$\RC$      &0&1&3&0&0    \\\hline

$\Lc^*_2:10$     &2&0&1&0&2&$\Lc_2:10$ &2&0&1&0&2    \\
$\RA$            &4&0&1&0&1&$\RA$      &4&0&1&0&1    \\ \hline

$\Lc^*_7: 15$    &1&0&2&2&0&$\Lc_3:15$ &1&0&2&2&0    \\
$\UG_2$          &3&0&3&1&0&$\UG_2$    &3&0&3&1&0     \\ \hline

$\Lc^*_8:60$     &0&2&1&2&0&$\Lc_4:60$ &0&2&1&2&0     \\
$\UnG$           &0&6&6&4&0&$\UnG$     &0&6&6&4&0   \\ \hline

$\Lc^*_9: 60$    &1&1&0&2&1&$\Lc_5:60$ &1&1&0&2&1  \\
$\EG$           &12&3&0&4&3&$\EG$      &12&3&0&4&3   \\  \hline

$\Lc^*_{10}:20$  &0&1&1&0&3&$\Lc_6:20$ &0&1&1&0&3     \\
$\EnG_1$         &0&1&2&0&3&$\EnG_1$   &0&1&2&0&3     \\\hline

$\Lc^*_{13}:30$  &0&0&1&2&2&$\Lc_7: 30$&0&0&1&2&2    \\
$\EnG_4$         &0&0&3&2&3&$\EnG_4$   &0&0&3&2&3    \\\hline

$\Lc^*_{14}:30$  &0&0&1&2&2&$\Lc_8:30$ &0&0&1&2&2   \\
$\EnG_5$         &0&0&3&2&3&$\EnG_5$   &0&0&3&2&3    \\\hline\hline

$\Lc^*_3: 5$     &$\vphantom{H^{H^{H^H}}}$\beo&\bef&\bez&\bez&\bez&$\Lc_9:10$    &\beo&\bef&\bez&\bez&\bez    \\
$\Tr$            &1&1&0&0&0&$\Tr,\UG_1$&2&2&0&0&0 \\\hline

$\Lc^*_6:5$      &$\vphantom{H^{H^{H^H}}}$\beo&\bef&\bez&\bez&\bez&$5+5=10$  \\
$\UG_1$          &1&1&0&0&0&  \\ \hline\hline

$\Lc^*_4:6$      &$\vphantom{H^{H^{H^H}}}$\bez&\bez&\bez&\befv&\bez&$\Lc_{10}:12$ &\bez&\bez&\bez&\befv&\bez  \\
$\IC$            &0&0&0&1&0&$\IC,\IA$  &0&0&0&2&0  \\\hline

$\Lc^*_5:6$      &$\vphantom{H^{H^{H^H}}}$\bez&\bez&\bez&\befv&\bez&$6+6=12$  \\
$\IA$            &0&0&0&1&0&  \\\hline\hline

$\Lc^*_{11}:20$  &$\vphantom{H^{H^{H^H}}}$\bez&\beo&\bez&\bethr&\beo&$\Lc_{11}:40$      &\bez&\beo&\bez&\bethr&\beo\\
$\EnG_2$         &0&1&0&2&1&$\EnG_2,\EnG_3$ &0&2&0&4&2 \\\hline

$\Lc^*_{12}:20$  &$\vphantom{H^{H^{H^H}}}$\bez&\beo&\bez&\bethr&\beo&$20+20$  \\
$\EnG_3$         &0&1&0&2&1&$=40$  \\\hline

$\Lc^*_{15}:30$  &$\vphantom{H^{H^{H^H}}}$\bez&\bet&\bez&\beo&\bet&$\Lc_{12}: 60$     &\bez&\bet&\bez&\beo&\bet \\
$\EnG_6$         &0&3&0&1&3&$\EnG_6,\EnG_7$ &0&6&0&2&6 \\\hline

$\Lc^*_{16}:30$  &$\vphantom{H^{H^{H^H}}}$\bez&\bet&\bez&\beo&\bet&$30+30$  \\
$\EnG_7$         &0&3&0&1&3&$=60$  \\\hline
 \end{tabular}
\label{tab:line-plane q=4}
\end{table}

\section*{Acknowledgments}The research of A.A. Davydov was carried out within the state assignment of Ministry of Science and Higher Education of the Russian Federation  No.\ FFNU-2025-0028 for Kharkevich Institute for Information Transmission Problems.
The research of S. Marcugini and F. Pambianco was supported in part by the Italian
National Group for Algebraic and Geometric Structures and their Applications (GNSAGA -
INDAM) (Contract No. U-UFMBAZ-2019-000160, 11.02.2019) and by University of Perugia
(Project No. 98751: Strutture Geometriche, Combinatoria e loro Applicazioni, Base Research
Fund 2017--2019; Fighting Cybercrime with OSINT, Research Fund 2021). This work was partially funded by the SERICS project (PE00000014) under the MUR National Recovery and Resilience Plan funded by the European Union-NextGenerationEU.\\
A.A. Davydov would like to thank E. Jagudaeva for organizational support.

\end{document}